\let\pa\partial  
\let\eps\varepsilon  
\newcommand{\N}{{\mathbb N}} 
\newcommand{\R}{{\mathbb R}}
\newcommand{\dist}{\operatorname{dist}}  
\newcommand{\T}{{{\mathbb T}^2}}
\newcommand{\A}{{\mathbb A}}
\newcommand{\dd}{{\mathrm{d}}}
\newcommand{\sym}{\operatorname{sym}}
\newcommand{\tensor}[1]{\mathbb #1}
\newcommand{\mat}[1]{\boldsymbol #1}
\newcommand{\vect}[1]{\boldsymbol #1}
\newcommand{\energy}{\mathcal{J}}
\newcommand{\fenergy}{\widetilde{\mathcal{J}}}
\newcommand{\Khp}{\mathcal{K}_{(h_n)}^+}
\newcommand{\Khpm}{\mathcal{K}_{(h_n)}^\pm}
\newcommand{\Khm}{\mathcal{K}_{(h_n)}^-}
\newcommand{\Kchi}{\mathcal{K}_{(\chi^{h_n})}}
\newcommand{\Kchim}{\mathcal{K}_{(\chi^{h_n})}^-}
\newcommand{\Kchip}{\mathcal{K}_{(\chi^{h_n})}^+}
\newcommand{\Kh}{\mathcal{K}_{(h_n)}}
\newcommand{\set}[1]{\mathcal #1}
\newcommand{\Khtp}{\widetilde{\mathcal{K}}_{(h_n)}^+}
\newcommand{\Khtpm}{\widetilde{\mathcal{K}}_{(h_n)}^\pm}
\newcommand{\Khtm}{\widetilde{\mathcal{K}}_{(h_n)}^-}
\newcommand{\Kht}{\widetilde{\mathcal{K}}_{(h_n)}}
\newcommand{\Kchit}{\widetilde{\mathcal{K}}_{(\chi^{h_n})}}
\newcommand{\SO}[1]{\textrm{SO} (#1)}
\newcommand{\inte}{\operatorname{int}}
\newcommand{\diam}{\operatorname{diam}}
\newtheorem{theorem}{Theorem}[section] 
\newtheorem{lemma}[theorem]{Lemma}   
\newtheorem{proposition}[theorem]{Proposition}  
\newtheorem{corollary}[theorem]{Corollary}  
\newtheorem{definition}{Definition}[section] 
\newtheorem{example}{Example}[section] 
\newtheorem{assumption}{Assumption}[section] 
\newtheorem{remark}{Remark}[section]
\begin{document}


\title{On the simultaneous homogenization and dimension reduction in 
elasticity and locality of $\Gamma$-closure}

\author{Mario Bukal\thanks{University of Zagreb, Faculty of Electrical Engineering 
and Computing, Unska 3, 10000 Zagreb, Croatia, \newline
E-Mails: mario.bukal@fer.hr, igor.velcic@fer.hr}, 
 and Igor Vel\v ci\'c\footnotemark[1]}

%
%
%

\maketitle 

\begin{abstract}
On the example of linearized elasticity we provide a framework for simultaneous homogenization and dimension reduction in the setting of linearized elasticity as well as non-linear elasticity  for the derivation of homogenized von K\'arm\'an plate and bending rod models. The framework encompasses even perforated domains and domains with oscillatory boundary, provided that the corresponding extension operator can be constructed.
Locality property of $\Gamma$-closure is established, i.e.~every energy density 
obtained by the homogenization process can be in almost every point be obtained as the limit of periodic energy densities. 
\end{abstract}

\vspace{4mm}
{\noindent\bf Keywords:}
elasticity, homogenization, dimension reduction, $\Gamma$-closure

\vspace{2mm}
{\noindent\bf 2010 MSC:} 74B05, 74B20, 74E30, 74K20, 74Q05

\section{Introduction and main results}
Our starting point is the three-dimensional linearized elasticity framework \cite{Cia97}, where
the stored elasticity energy of a material is given by a quadratic form
\begin{equation}\label{intro:def.ee}
\frac12\int_{\hat{\Omega}}\tensor{A}(\hat x)\sym\nabla\vect{u}:\sym\nabla\vect{u}\,\dd\hat x\,.
\end{equation}
Above $\hat\Omega\subset\R^3$ describes a reference configuration of material, $\vect{u}:\hat{\Omega}\to \R^3$ is
the displacement field, $\sym\nabla\vect{u} = (\nabla\vect{u} + {\nabla\vect{u}}^{\tau})/2$ symmetrized gradient, 
and $\tensor{A}$ is the elasticity tensor describing material properties.

In this paper we consider composite platelike materials with the aim of studying the asymptotic behaviour of a sequence of energies (\ref{intro:def.ee}), 
parametrized by the vanishing body thickness, and deriving homogenized linear plate model by means of
simultaneous homogenization and dimension reduction. 
Such a problem has been already discussed in 
\cite{DamVog87}, where the authors derived the model of homogenized plate on the level of 
linearized elasticity system of equations using compensated compactness argument (see \cite{MuTa97}) 
and assuming that external loads act in the vertical direction. In that way they obtained a limit model, which is purely (linear) bending model.
We also mention the work \cite{GuMo06}, where in the context of linearized elasticity 
the authors derived, again by using the compensated compactness, the model of homogenized 
plate for elastic laminates (layered materials). 
The limit problem is realized in several different ways with explicitely given elasticity tensors.
In contrast to these, our approach is completely variational, resembling
the classical $\Gamma$-convergence method \cite{Bra02,DMa93}, and relies on techniques developed by the second author in the framework 
of deriving homogenized bending rod \cite{MaVe14a} 
and von K\'arm\'an plate models \cite{Vel14a} from  three-dimensional non-linear elasticity theory.
The approach includes materials which oscillate both in in-plane and out-of-plane directions,
and even perforated materials and materials with oscillatroy boundary, provided that certain extension 
operator can be constructed. Moreover, we consider the full model in the sense that it admits external 
loads in all directions with appropriate scaling.

To the best of our knowledge, the problem of simultaneous homogenization and dimension reduction
solved in this paper, even in the context of the linearized elasticity, 
cannot be put into any existing abstract framework. The reason for that lies precisely in the 
simultaneity of the approach (cf.~\cite{Cia97}).
In the first part of the paper we utilize the arguments 
given in \cite{Vel14a}, simplified to the case of the linearized elasticity, and demonstrate 
how can we elegantly derive the model of linear plate from the $3D$ linearized elasticity. 
Simplification (connected with a bit change)  of the arguments presented in \cite{MaVe14a,Vel14a}  
(see the proof of Lemma \ref{lemma:equi-int1} below),
along with the possibility of analyzing domains (materials) with holes and oscillating boundary
on the abstract level,
are the main contributions of the first part. 
We emphasize that our approach consists
in defining the correctors on the energetic level  (see Lemma
\ref{lemma:equi-int1} below), although, the approach can be used for the analysis of equations as well 
(see \cite{buk16}). We also point out that the method presented here, as well as 
in \cite{MaVe14a,Vel14a}, is not only limited to dimension reduction problems in elasticity.

Performing the rescaling of the reference configuration $\Omega_h = \omega\times[-\frac{h}{2},\frac{h}{2}]$,
where $\omega\subset\R^2$ describes the shape and $h>0$ thickness of the plate, to the fixed domain
$\Omega = \omega\times I$ with $I:= [-\frac{1}{2},\frac{1}{2}]$,
and dividing the elastic energy by the order of volume $h$, expression 
(\ref{intro:def.ee}) amounts to
\begin{equation*}\label{intro:def.eeh}
\energy_h(\vect u) = \frac12\int_\Omega \tensor{A}^h(x)\sym\nabla_h \vect{u} : 
\sym\nabla_h\vect{u}\dd x=\int_\Omega Q^h(x,\sym\nabla_h \vect{u})\dd x \,,
\end{equation*}
where $\nabla_h = (\nabla',\frac{1}{h}\pa_3)$ denotes the scaled gradient 
and $\A^h (x) := \A(h,x_1,x_2,hx_3)$ the scaled elasticity tensor ($\A(h,\cdot)$ is the elasticity tensor on the domain $\Omega_h$ and $Q^h(x,\mat F)=\frac12\tensor{A}^{h}(x)\mat F : \mat F$ is the corresponding quadratic form).
Additionally, we require that the family of   quadratic forms $(Q^h)_{h>0}$
satisfies the uniform boundedness and coercivity estimates on symmetric matrices and that they assign 
zero value to skew symmetric matrices. Denoting by 
$Q^h(x,\mat F)=\frac12\tensor{A}^{h}(x)\mat F : \mat F$ the corresponding quadratic form,
then there exist positive constants $0 < \alpha \leq \beta$, independent of $h>0$, such that:   
\begin{eqnarray}\label{2.eq:A1}
\textrm{(coercivity)} & & Q^h(x,\mat F) \geq \alpha |\sym \mat{F}|^2\quad \forall\, 
\mat F\in\R^{3\times3}\,, \text{ for a.e.~} x \in\Omega\,; \\ \nonumber
\textrm{(uniform boundedness)} & & Q^h(x, \mat F) \leq \beta |\sym \mat{F}|^2\quad \forall\, 
\mat F\in\R^{3\times3}\,, \text{ for a.e.~} x \in\Omega\,.
\end{eqnarray}
Notice that from the uniform boundedness and positivity of $Q^h$ it follows 
$$
 Q^h(x,\mat F) = Q^h(x, \sym \mat F)\,, \quad \forall \mat F \in \R^{3\times 3}\,,
 \textrm{ for a.e.~} x \in \Omega\,, 
$$
and we also have 
\begin{equation}\label{razlika} 
\left| Q^h(x,\mat F_1)-Q^h(x,\mat F_2)\right| 
\leq \beta |\sym \mat F_1-\sym \mat F_2| |\sym \mat F_1+\sym \mat F_2|\,,\quad 
\forall \mat F_1, \mat F_2 \in \R^{3\times 3}, \textrm{ for a.e.~} x \in \Omega\,.
\end{equation}
                
Taking an arbitrary sequence $(h_n)_n$ of plate thickness decreasing to zero, 
we aim to describe the asymptotic behaviour of the sequence of the energy functionals $\energy_{h_n}$.
In the following we outline key waypoints in the derivation of the homogenized linear plate model.
First, let us denote by $H_{\Gamma_d}^1(\Omega,\R^3) = \{\vect u \in H^1(\Omega,\R^3)\ :\ \vect u|_{\Gamma_d\times I} = 0\}$, 
the space of displacement fields which are fixed to zero on a portion $\Gamma_d\times I$ 
($\Gamma_d\subset\omega$) of positive surface measure of the lateral boundary of $\Omega$. 
We also denote by $H_{\Gamma_d}^1(\omega,\R^2) = \{\vect u \in H^1(\omega,\R^2)\ :\ \vect u|_{\Gamma_d} = 0\}$, and 
by  $H_{\Gamma_d}^2(\omega)=\{v \in H^2(\omega)\ :\  v|_{\Gamma_d} = 0,\ \partial_{\alpha}v|_{\Gamma_d} = 0, \ \textrm{ for } \alpha=1,2 \}$.
Applying the Griso's decomposition (Lemma \ref{app:lem.limsup}) 
on a given sequence of displacement fields $(\vect{u}^{h_n})\subset H^1_{\Gamma_d}(\Omega,\R^3)$ of equi-bounded energies, we decompose its symmetrized scaled gradients in the form
\begin{equation}\label{intro:eq.ssg}
\sym\nabla_{h_n}\vect u^{h_n} = \imath(\sym\nabla'\vect w - x_3\nabla'^2 v ) + \sym\nabla_{h_n}\tilde{\vect u}^{h_n}\,,
\end{equation}
where $\vect{w}\in H^1_{\Gamma_d}(\omega,\R^2)$ and $v\in H^2_{\Gamma_d}(\omega)$ are horizontal in-plane and vertical displacements which build the fixed part, 
and $\tilde{\vect{u}}^{h_n}\in H^1_{\Gamma_d}(\Omega,\R^3)$ is a corrector which builds the relaxational part of $\sym\nabla_{h_n}\vect u^{h_n}$.
Here $\imath$ denotes the canonical embedding of $\R^{2\times2}$ into $\R^{3\times3}$, see Section \ref{sec:not} below.
Motivated by the above decomposition we define the space of matrix fields which appear as the fixed part of 
symmetrized scaled gradients
\begin{equation*}
\set{S}(\omega) = \{ \mat M_1 + x_3\mat M_2\ :\ 
\mat M_1,\mat M_2\in L^2(\omega,\R^{2\times2}_{\mathrm{sym}})\,,\ x_3\in I\}\,.
\end{equation*}
Following the standard approach  of 
$\Gamma$-convergence, for $(h_n)_{n}$ monotonically decreasing to zero, 
$A\subset\omega$ open subset and $\mat M\in \set S(\omega)$ 
define:
\begin{align*}
\Khm(\mat M,A) &= \inf\Big\{\liminf_{n\to\infty}\int_{\Omega} 
		Q^{h_n}(x,\imath(\mat M\mathbbm{1}_{A \times I}) + \nabla_{h_n}\vect\psi^{h_n})\dd x\ | \\
 		& \qquad\ ~\quad(\psi_1^{h_n},\psi_2^{h_n},{h_n}\psi_3^{h_n}) \to 0 \text{ strongly in }L^2(\Omega,\R^3)\Big\}\,;\nonumber
\end{align*}
\begin{align*}
\Khp(\mat M,A) &= \inf\Big\{\limsup_{n\to\infty}\int_{\Omega}   
		Q^{h_n}(x,\imath(\mat M\mathbbm{1}_{A \times I}) + \nabla_{h_n}\vect\psi^{h_n})\dd x\ | \\
		& \qquad\ ~\quad(\psi_1^{h_n},\psi_2^{h_n},{h_n}\psi_3^{h_n}) \to 0 \text{ strongly in }L^2(\Omega,\R^3)\Big\}. \nonumber
\end{align*}
These functionals play the role similar to lower and upper $\Gamma$-limits, respectively, and infimum in the definition 
is taken over all sequences of vector fields $(\vect{\psi}^{h_n})_{n}\subset H^1(A\times I,\R^3)$ such that
$(\psi_1^{h_n},\psi_2^{h_n},{h_n}\psi_3^{h_n}) \to 0$ strongly in the $L^2$-topology.
Establishing the equality (on a subsequence of $(h_n)_n$) between lower and upper bound
\begin{equation}
\Khm(\mat M,A) = \Khp(\mat M,A) =: \Kh(\mat M,A)\,,\quad \mat M\in \set{S}(\omega)\,,\  
A\subset\omega\,\text{open with Lipschitz boundary},
\end{equation}
and using the properties of the variational functional $\Kh$ (see also \cite[Lemma 3.7]{Vel14a}), assures an integral
representation of the variational functional (cf.~Proposition \ref{prop:Int_form}), i.e.~there exists
a function $Q^0$ (dependent on the sequence $(h_n)_n$) such that
\begin{equation}
\Kh(\mat M,A) = \int_A Q^0(x',\mat M_1(x'),\mat M_2(x'))\,\dd x'\,.
\end{equation}
Referring to Section \ref{sec:3} for details, we construct the limit energy functional (finite on Kirchoff-Love displacements, see Definition \ref{2.def:conv_displ} and Remark \ref{bukal100} )
\begin{equation}
\energy_0(\vect w, v) = \int_\omega Q^0(x',\sym\nabla'\vect w,-\nabla'^2v)\dd x'\,
\end{equation}
and provide the convergence analysis of $\energy_{h_n}(\vect u^{h_n})\to \energy_0(\vect w, v)$ as $n\to\infty$
by means of the following theorem. 
\begin{theorem}\label{thm:1}~
Let $(h_n)_n$ be monotonically decreasing to zero sequence of plate thickness that satisfies 
Assumption \ref{2:ass}.
\begin{enumerate}[(i)]
  \item \emph{(Compactness)} Let $(\vect u^{h_n})_n\subset H^1_{\Gamma_d}(\Omega,\R^3)$ be a sequence of 
  {\em equi-bounded} energies, i.e.~\\$\limsup_{n\to\infty} \energy_{h_n}(\vect{u^{h_n}}) < \infty$.
  Then there exists $(\vect w, v)\in H^1_{\Gamma_d}(\omega,\R^2)\times H^2_{\Gamma_d}(\omega)$ such that $\vect u^{h_n} \to (\vect w, v)$ 
  on a subsequence as $n\to\infty$ in the sense of Definition \ref{2.def:conv_displ} below.
  \item \emph{(Lower bound)} For every $(\vect u^{h_n})_n\subset H^1_{\Gamma_d}(\Omega,\R^3)$ sequence of equi-bounded energies
  such that $\vect u^{h_n} \to (\vect w, v)$, it holds
  \begin{equation*}\label{1.eq:lb}
  	\liminf_{n\to\infty}\energy_{h_n}(\vect{ u^{h_n}}) \geq \energy_0(\vect{w},v)\,.
  \end{equation*}
  \item \emph{(Upper bound)} For every $(\vect w, v)\in H^1_{\Gamma_d}(\omega,\R^2)\times H^2_{\Gamma_d}(\omega)$
   there exists  $(\vect u^{h_n})_n\subset H^1_{\Gamma_d}(\Omega,\R^3)$ such that
  \begin{equation*}\label{1.eq:ub}
   \vect u^{h_n} \to (\vect w, v) \quad\text{and}\quad 
   \lim_{n\to\infty}\energy_{h_n}(\vect{ u^{h_n}}) = \energy_0(\vect{w},v)\,.
  \end{equation*}
\end{enumerate}
\end{theorem}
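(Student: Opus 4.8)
The plan is to run the classical three-step $\Gamma$-convergence scheme, using the variational functionals $\Khm\le\Khp$ together with their common value $\Kh$ — and the integral representation $Q^0$ furnished by Proposition~\ref{prop:Int_form} — as a surrogate for the $\Gamma$-limit. For the compactness statement~(i) I would start from the coercivity bound in~\eqref{2.eq:A1}: equi-boundedness of the energies forces a uniform bound on $\|\sym\nabla_{h_n}\vect u^{h_n}\|_{L^2(\Omega)}$. Feeding this into Griso's decomposition (Lemma~\ref{app:lem.limsup}) yields the splitting~\eqref{intro:eq.ssg} with a priori bounds on $\vect w^{h_n}$ in $H^1(\omega,\R^2)$, on $v^{h_n}$ in $H^2(\omega)$, and on the corrector $\tilde{\vect u}^{h_n}$. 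Extracting weakly convergent subsequences and noting that the homogeneous boundary conditions on $\Gamma_d$ persist under weak convergence of traces, the limit $(\vect w,v)$ lies in $H^1_{\Gamma_d}(\omega,\R^2)\times H^2_{\Gamma_d}(\omega)$, which is exactly the convergence of Definition~\ref{2.def:conv_displ}.

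For the lower bound~(ii) I may assume $\liminf_n\energy_{h_n}(\vect u^{h_n})<\infty$ and pass to a subsequence attaining it; along it the $L^2$-bound holds and~\eqref{intro:eq.ssg} is available, with fixed part $\mat M^{h_n}:=\sym\nabla'\vect w^{h_n}-x_3\nabla'^2v^{h_n}$ converging in $L^2$ to $\mat M:=\sym\nabla'\vect w-x_3\nabla'^2v$. I would localize to an open set $A\subset\subset\omega$ with Lipschitz boundary, replace $\mat M^{h_n}$ by a piecewise-constant approximation of $\mat M$ with the mismatch controlled by~\eqref{razlika} and the uniform $L^2$-bounds, and observe that $\tilde{\vect u}^{h_n}$ — whose first two components, and $h_n$ times its third component, tend to zero strongly in $L^2$ by the properties of the decomposition — is an admissible competitor in the definition of $\Khm(\mat M,A)$. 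This gives $\liminf_n\int_{A\times I}Q^{h_n}(x,\cdot)\,\dd x\ge\Khm(\mat M,A)=\Kh(\mat M,A)=\int_AQ^0(x',\mat M_1,\mat M_2)\,\dd x'$, and letting $A\uparrow\omega$ (via superadditivity of $A\mapsto\Kh(\mat M,A)$ on disjoint open sets and inner regularity) yields~(ii).

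For the upper bound~(iii), a density argument — legitimate because $Q^0$ inherits the quadratic upper bound with constant $\beta$, so $\energy_0$ is continuous on $H^1_{\Gamma_d}(\omega,\R^2)\times H^2_{\Gamma_d}(\omega)$ — reduces matters to smooth $(\vect w,v)$. Covering $\omega$, up to a set of arbitrarily small measure, by finitely many disjoint open Lipschitz pieces $A_i$ on which $\mat M$ is uniformly close to a constant field $\mat M_i\in\set S(\omega)$, I would select on each $A_i$ an almost-optimal correcting sequence $\vect\psi^{h_n}_i$ realizing $\Khp(\mat M_i,A_i)=\Kh(\mat M_i,A_i)$ up to $\eps$, with $(\psi^{h_n}_{i,1},\psi^{h_n}_{i,2},h_n\psi^{h_n}_{i,3})\to0$ in $L^2$. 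Starting from a Kirchhoff--Love-type displacement associated with $(\vect w,v)$, correcting the transverse shear, and superposing the glued correctors $\eta_i\vect\psi^{h_n}_i$ with cut-offs $\eta_i\in C_c^\infty(A_i)$, a diagonal argument over the approximation parameters produces $\vect u^{h_n}$ with $\vect u^{h_n}\to(\vect w,v)$ and $\energy_{h_n}(\vect u^{h_n})\to\energy_0(\vect w,v)$; the constraint $(\psi_1,\psi_2,h_n\psi_3)\to0$ built into $\Khp$ is precisely what makes room for the transverse-shear and homogenization corrections.

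The main obstacle lies in parts~(ii)--(iii): establishing the integral representation, i.e.\ that $A\mapsto\Kh(\mat M,A)$ is the trace of a measure with density $Q^0$ and that $\Khm=\Khp$ along a subsequence (the De Giorgi--Letta-type argument behind Proposition~\ref{prop:Int_form}), and — in the gluing for the upper bound — controlling the interface terms $\psi^{h_n}_i\,\nabla'\eta_i$ appearing in $\sym\nabla_{h_n}(\eta_i\vect\psi^{h_n}_i)$. These are harmless for the in-plane components but delicate for the third, for which only $h_n\psi^{h_n}_{i,3}\to0$ is known a priori. Following~\cite{Vel14a}, this is handled by first modifying the near-optimal correctors (truncation together with a Poincar\'e--Korn inequality on the thin domain and the energy bound) so that the whole field $\vect\psi^{h_n}_i$, and not merely its rescaling, stays bounded in $L^2$; the remaining estimates are then routine.
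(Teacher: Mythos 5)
Your overall scheme is right and part (i) matches the paper, but for (ii) and (iii) you re-derive the classical $\Gamma$-convergence ``localize, freeze, glue'' machinery, which is precisely what the paper's variational framework is designed to avoid; the paper's proofs of (ii) and (iii) are each a few lines.

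For the lower bound you cite \eqref{intro:eq.ssg} but then describe an $n$-dependent fixed part $\mat M^{h_n}=\sym\nabla'\vect w^{h_n}-x_3\nabla'^2 v^{h_n}$ and propose to localize and approximate $\mat M$ by piecewise constants. This is not needed: Lemma~\ref{app:lem.limsup} already gives, on a subsequence, the decomposition
$\sym\nabla_{h_n}\vect u^{h_n}=\imath(\sym\nabla'\vect w-x_3\nabla'^2v)+\sym\nabla_{h_n}\bar{\vect\psi}^{h_n}$
with the \emph{limit} pair $(\vect w,v)$ and with $(\bar\psi_1^{h_n},\bar\psi_2^{h_n},h_n\bar\psi_3^{h_n})\to 0$ in $L^2$. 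Since $\Khm$ in Definition~\ref{defodK} takes an arbitrary $\mat M\in\set S(\omega)$, not a constant, $(\bar{\vect\psi}^{h_n})_n$ is directly an admissible competitor for $\Khm(\mat M,\omega)$ with $\mat M:=\sym\nabla'\vect w-x_3\nabla'^2v$, and the inequality
$\liminf_n\energy_{h_n}(\vect u^{h_n})\geq\Khm(\mat M,\omega)=\Kh(\mat M,\omega)=\energy_0(\vect w,v)$
follows at once from the definition of $\Khm$ and Proposition~\ref{prop:Int_form}. No inner sets $A\subset\subset\omega$, no piecewise-constant mollification of $\mat M$, no superadditivity or inner regularity is invoked.

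For the upper bound, Lemma~\ref{lemma:equi-int1} (together with Remark~\ref{rempromjena}) already hands you, for this same $\mat M\in\set S(\omega)$, a corrector sequence $(\vect\psi^{h_n})_n$ vanishing near $\partial\omega\times I$, with $(\psi_1^{h_n},\psi_2^{h_n},h_n\psi_3^{h_n})\to 0$ in $L^2$ and realizing $\Kh(\mat M,\omega)$. The recovery sequence is then simply the Kirchhoff--Love part $(\vect w, v/h_n)-x_3(\nabla'v,0)$ plus this corrector, automatically in $H^1_{\Gamma_d}(\Omega,\R^3)$. The density step, partitioning of $\omega$, cut-offs $\eta_i$, and the interface term $\psi^{h_n}_{i,3}\nabla'\eta_i$ — which, as you correctly observe, is not controlled by $h_n\psi^{h_n}_{i,3}\to 0$ alone — are genuine difficulties of your route, but they are the difficulties that the proof of Lemma~\ref{lemma:equi-int1} (via the Griso decomposition of correctors and the equi-integrability/truncation Lemmas~\ref{ekvi1}, \ref{ekvi2}, \ref{nulizacija}) has already absorbed. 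In short, your Step~(iii) gluing belongs to the proof of Lemma~\ref{lemma:equi-int1}, not to the proof of Theorem~\ref{thm:1}; once you invoke that lemma as the paper does, (ii) and (iii) become immediate and none of the extra machinery is needed.
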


\begin{remark}
In order to analyze a real problem, one can add forces in the above analysis 
(see Section \ref{subforces} below). Since we are dealing with the linearized elasticity, 
by the uniqueness argument, we can conclude converegence of the whole sequence in $(i)$. 
As shown in Section \ref{subforces}, Theorem \ref{thm:1} is proved even for perforated domains 
or domains with defects, when we don't have pointwise coercivity condition. 
\end{remark}

The second question addressed in this paper is about the local character of $\Gamma$-closure, i.e. characterization
of a composite of $N$ different constituents with prescribed volume fractions.   Such a problem has an
important application in the optimal design of materials \cite{Tor10}.
In the context of homogenization of elliptic equations and systems it is well known as  \emph{$G$-closure problem}. 
The first characterization of  $G$-closure by means of periodic homogenization for the case of the linear elliptic equation 
describing two isotropic materials has been performed independently in \cite{LuCh86} and \cite{Tar85}. 
They showed the \emph{locality property} --- every effective tensor obtained by mixing 
two materials with prescribed volume fractions can be locally (at a.e.~point) recovered as the pointwise limit of a sequence 
of periodically homogenized mixtures of the same volume fractions. Later on this was generalized to the case of nonlinear elliptic
and strongly monotone operators in divergence form \cite{Rai01}.
 A variational approach utilizing $\Gamma$-convergence method has been performed in \cite{BaBa09} and they proved the locality property for convex energies satisfying certain growth and coercivity assumption.
In the case of non-convex energies 
only a weaker result was obtained, leaving the characterization problem widely open. 
Our approach closely follows the one presented in \cite{BaBa09}, but it is not straightforward. Peculiarities arise 
through the usage of non-standard ``$\Gamma$-convergence'', which makes the diagonalization procedure more subtle, 
and through the fact that we also have to deal with dimension reduction. 
Although in the linear case the diagonalization argument can be performed by the metrizability 
property of $\Gamma$-convergence in the class of coercive functionals (see \cite{DMa93} for details), 
we retain in the setting below, because it also covers the cases of non-linear bending rod and 
von K\'arm\'an plate. 
Eventually we are able to give the local characterization of all possible effective behaviours of composite materials 
by means of in-plane periodically homogenized mixtures.
In this part we do not take into analysis perforated domains or  domains with defects, but assume uniform boundedness and coercivity assumption from \eqref{2.eq:A1}. 
Denote by $\set X^N(\Omega)$ the family of 
characteristic functions $(\chi_1,\ldots,\chi_N)\in L^\infty(\Omega,\{0,1\}^N)$ such that 
$\sum_{i=1}^N\chi_i(x) = 1$ for a.e.~$x\in\Omega$. Equivalently, there exists a measurable partition
$\{A_i\}_{i=1,\ldots,N}$ of $\Omega$ such that $\chi_i = \mathbbm{1}_{A_i}$ for $i=1,\ldots,N$.
Function $\chi\in\set X^N(\Omega)$ uniquely determines the composite material whose elastic energy is given by
\begin{equation}
\energy_\chi(\vect{u}) = \int_{\Omega}\sum_{i=1}^N Q_i(x,\sym\nabla_{h}\vect u)\chi_i(x)\dd x\,,
\end{equation}
where $Q_i$, $i=1,\ldots,N$, denote energy densities of constituents for which we assume the uniform boundedness and coercivity as in \eqref{2.eq:A1}. 

Next, we are considering a sequence of composites $(\chi^{h_n})_n$ parametrized by the material thickness $(h_n)_n$. 
Verifying the uniform boundedness and coercivity of the sequence of energy densities
\begin{equation*}
Q^{\chi^{h_n}} = \sum_{i=1}^N Q_i\chi_i^{h_n}\,,
\end{equation*}
we can perform the asymptotic analysis according to Theorem \ref{thm:1} (cf.~Section \ref{sec:3}). 
Given $(h_n)_n$ monotonically decreasing to zero and the sequence of mixtures $(\chi^{h_n})_n$, there exists a variational functional
$\Kchi$ such that on a subsequence (still denoted by $(h_n)_n$)
\begin{equation*}
\Kchi(\mat M,A) = \Kchim(\mat M,A) = \Kchip(\mat M,A)\,,
\quad \mat M\in\set S(\omega),\ A\subset\omega\text{ open with Lipschitz boundary},
\end{equation*}
where $\Kchim$ and $\Kchip$ are defined analogously as $\Khm$ and $\Khp$.
The limit energy density of the sequence of mixtures $(\chi^{h_n})_n$ then equals
\begin{equation} \label{bukal1b}
Q(x',\mat M_1,\mat M_2) =
\lim_{r\downarrow0}\frac{1}{|B(x',r)|}\Kchi(\mat M_1+x_3\mat M_2,B(x',r))\,,
\quad \forall\mat M_1, \mat M_2\in \R^{2\times2}_{\sym}\ \text{ and a.e.~}x'\in\omega\,.
\end{equation}
Before we proceed, let us recall results for periodic composites from \cite{NeVe13}. Assuming that the energy densities oscillate with
period $\eps(h)$ in in-plane directions, we obtain different limiting behaviour
depending on parameter $\gamma = \lim_{h\downarrow0}(h/\eps(h))$. For $\gamma\in(0,\infty)$,
an explicit formula for the homogenized energy density holds:
\begin{equation}\label{int:def.qhom}
Q_\gamma(\mat M_1,\mat M_2) = \inf_{\vect{\psi}\in H^1(\T \times I,\R^3)}
\sum_{i=1}^N\int_{A_i \times I}Q_i(x,\imath(\mat M_1 + x_3\mat M_2) + \nabla_\gamma\vect\psi)\dd x\,,
\end{equation}
where $\{A_i\}_{i=1,\ldots,N}$ is partition of $\T\times I$ and $\T$ denotes the two-dimensional torus, i.e.~$\T\simeq [0,1)^2$ 
with periodic boundary conditions. 
Let us only mention that the cases $\gamma=\infty$ and $\gamma=0$ can be obtained as poinwise limits 
of the energies when $\gamma \to \infty$ and $\gamma \to 0$, respectively. 

For $\theta\in[0,1]^N$ such that $\sum_{i=1}^N\theta_i=1$, let $\set Q_{\theta}$ denotes the
set of all quadratic forms $Q_{\gamma_k}$ from (\ref{int:def.qhom}) for some $\gamma_k\in(0,\infty)$
and partition $\{A_i^k\}_{i=1,\ldots,N}$ such that $|A_i^k| = \theta_i$ for all $i=1,\ldots,N$ and $k\in\N$.
Let $\set P_{\theta}$ denotes the closure of $\set Q_{\theta}$.
The following theorem provides the local characterization 
of the effective behaviour of a sequence of mixtures.
\begin{theorem}\label{thm:2}
Let $\Omega = \omega\times I$ with $\omega\subset\R^2$ Lipschitz domain.
Let  $(h_n)_n$ be monotonically decreasing to zero sequence
 and let $\theta\in L^\infty(\omega,[0,1]^N)$ 
 be such that $\sum_{i=1}^N \theta_i=1$ a.e.~in $\omega$. 
  Let 
$Q:\omega\times\R_{\sym}^{2\times2}\times\R_{\sym}^{2\times2}\to\R$ be given quadratic form (coercive and bounded).
The following statements are equivalent:
\begin{enumerate}[(i)]
  \item   there exists a subsequence, still denoted by $(h_n)_n$, such that there exists a sequence 
  of mixtures $(\chi^{h_n})_n\subset\set X^N(\Omega)$ whose limit energy density
  equals $Q$ and $\chi^{h_n} \overset{*}{\rightharpoonup}{\mu}$  for some $\mu \in L^\infty(\Omega,[0,1]^N)$ which satisfies $\int_I \mu_i (x',x_3)\, dx_3=\theta_i(x')$ a.e. in $\omega$, for $i=1,\dots,N$.
  \item $Q(x_0',\cdot,\cdot)\in \set P_{\theta(x_0')}$ for a.e.~$x_0'\in\omega$.
\end{enumerate}
\end{theorem}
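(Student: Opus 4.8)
The plan is to prove Theorem \ref{thm:2} by establishing the two implications separately, treating the ``easy'' direction $(i)\Rightarrow(ii)$ first and then the substantial direction $(ii)\Rightarrow(i)$, which requires a careful diagonalization.

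\textbf{Direction $(i)\Rightarrow(ii)$.} Assume a sequence of mixtures $(\chi^{h_n})_n$ with limit energy density $Q$ and $\chi^{h_n}\overset{*}{\rightharpoonup}\mu$ with the prescribed in-$x_3$-averaged volume fractions $\theta$. Fix a Lebesgue point $x_0'$ of $x'\mapsto\theta(x')$, of $x'\mapsto Q(x',\cdot,\cdot)$, and of the relevant limit objects. The idea is a blow-up: rescale around $x_0'$ at scale $r$, pass $r\downarrow0$, and extract from the rescaled mixtures a further periodic structure. Concretely, by \eqref{bukal1b} the value $Q(x_0',\mat M_1,\mat M_2)$ is the limit over $r$ of the averaged variational functional $\Kchi$ on balls $B(x_0',r)$; by the very definition of $\Kchi$ each such average is itself a limit (along $n$) of cell-type energies with volume fractions converging to $\theta(x_0')$. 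A careful two-parameter diagonal argument over $(r,n)$, combined with the explicit periodic formula \eqref{int:def.qhom} and the stability of the classes $\set Q_\theta$ under the relevant limits, yields that $Q(x_0',\cdot,\cdot)$ lies in the closure $\set P_{\theta(x_0')}$. The continuity/metrizability of the relevant convergence (in the coercive class) is what makes the diagonal extraction legitimate; this is the technical point flagged in the introduction as ``more subtle'' than in \cite{BaBa09}.

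\textbf{Direction $(ii)\Rightarrow(i)$.} This is the construction. Assume $Q(x_0',\cdot,\cdot)\in\set P_{\theta(x_0')}$ a.e. First reduce to the case where $Q(x_0',\cdot,\cdot)\in\set Q_{\theta(x_0')}$ for a.e.\ $x_0'$ (i.e.\ is actually attained by some periodic mixture, not merely approximated): since $\set P_\theta$ is the closure of $\set Q_\theta$, one approximates $Q$ by a measurable selection of periodic energy densities and controls the error using \eqref{razlika} together with the uniform bounds \eqref{2.eq:A1}; measurable selection theorems provide the needed $x_0'$-dependent choices of $\gamma_k$, partitions $\{A_i^k\}$ and correctors. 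Next, one freezes a fine partition of $\omega$ into small cells $\{\omega_j\}$; on each $\omega_j$ one has, up to small error, a single periodic energy density $Q_{\gamma^{(j)}}$ realized by a periodic mixture of $\T\times I$ with volume fractions $\theta^{(j)}\approx\theta$ on $\omega_j$. One then pastes these periodic patterns into $\Omega$ at a spatial period $\eps$ much smaller than the cell size but linked to $h_n$ so that $h_n/\eps\to\gamma^{(j)}$ on $\omega_j$; the resulting $\chi^{h_n}\in\set X^N(\Omega)$ has $\chi^{h_n}\overset{*}{\rightharpoonup}\mu$ with $\int_I\mu_i\,dx_3=\theta_i$. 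By the periodic homogenization result \cite{NeVe13} (formula \eqref{int:def.qhom}) together with the locality of $\Khm=\Khp$ (Theorem \ref{thm:1} and Proposition \ref{prop:Int_form}), the limit energy density of this constructed sequence is, cell by cell, the prescribed $Q_{\gamma^{(j)}}$, hence globally close to $Q$; a final diagonalization in the refinement parameter of the partition together with the approximation-error bookkeeping upgrades ``close to $Q$'' to ``equal to $Q$'', at the cost of passing to a subsequence of $(h_n)_n$.

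\textbf{Main obstacle.} The hard part is the double/triple diagonalization needed to couple three independent limiting processes --- the spatial period $\eps\to0$, the thickness $h_n\to0$ with the prescribed ratio $\gamma$, and the refinement of the partition $\{\omega_j\}$ (plus, in direction $(i)$, the blow-up radius $r$) --- within a ``$\Gamma$-convergence'' that is \emph{non-standard} (the admissible correctors $(\psi_1,\psi_2,h_n\psi_3)$ have an anisotropic constraint, and the dimension-reduction scaling $\nabla_{h_n}$ couples $n$ to the geometry). Unlike the classical coercive setting one cannot simply invoke metrizability of $\Gamma$-convergence to extract the diagonal sequence; instead one must verify directly, using the structure of $\Khpm$ and the quantitative bound \eqref{razlika}, that the relevant functionals depend continuously (in an appropriate uniform sense on bounded sets of matrix fields) on the data, so that small perturbations of volume fractions, of $\gamma$, and of the partition produce controllably small perturbations of the limit density. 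Once this continuous-dependence/stability lemma is in place, the diagonal extractions and the passage from $\set Q_\theta$ to its closure $\set P_\theta$ are routine, and the two implications close up.
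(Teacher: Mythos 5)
Your proposal captures the right overall shape (blow-up around a Lebesgue point, a fine partition with frozen densities, a double/triple diagonalization), but it misses the key structural pivot that makes both directions work and it substitutes machinery that is either unnecessary or not obviously sufficient.

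The crux you leave out is the paper's separate establishment that the non-periodic class of limit energy densities coincides with the closure of the periodic one: $\set G_{\theta}^{(h_n)}([0,1]^2) = \set P_{\theta}$ (Proposition~\ref{4.prop:geqp}), preceded by the dilatation identity $\set G_\theta^{(h_n)}(\omega) \subseteq \set G_\theta^{(h_n/s)}(\tilde\omega)$ (Proposition~\ref{4.prop:dilatation}) and the closedness of $\set G_\theta$ (Proposition~\ref{4.prop:closure}). In direction $(i)\Rightarrow(ii)$, the blow-up gives you only that $Q(x_0',\cdot,\cdot)$ lies in $\set G_{\theta(x_0')}$: the rescaled mixtures $\chi^{h_{n,m}}$ are not periodic, and nothing in the blow-up produces periodicity. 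Your phrase ``cell-type energies with volume fractions converging to $\theta(x_0')$'' together with ``stability of $\set Q_\theta$'' silently presumes the identification $\set G_\theta = \set P_\theta$; without proving it, the argument does not land in $\set P_{\theta(x_0')}$. The paper's proof of that identification is a short but genuinely nontrivial argument: apply Lemma~\ref{3.14} with periodic boundary conditions on $\T\times I$ and use the convexity of $Q$ so that the $\T$-periodic minimization of the homogeneous limit density equals the density itself; each $h_n$-level minimum is then literally an element of $\set Q_\theta$ via~\eqref{int:def.qhom}, so $Q\in\set P_\theta$. This step has no analogue in your proposal.

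In direction $(ii)\Rightarrow(i)$, the reduction to ``$Q(x_0',\cdot,\cdot)\in\set Q_{\theta(x_0')}$ for a.e.\ $x_0'$ via measurable selection'' is both unnecessary and problematic. It is unnecessary because once $\set G_\theta=\set P_\theta$ is established, on each cell $U_{j,m}$ one simply picks \emph{some} sequence of mixtures (not periodic, no selection needed) whose limit energy density is the frozen value $Q(x'_{j,m},\cdot,\cdot)$; the existence is exactly membership in $\set G_{\theta(x'_{j,m})}$. It is problematic because the target set $\set Q_{\theta(x_0')}$ varies with $x_0'$ through $\theta(x_0')$, so the measurable selection would have to track both the nearby periodic mixture and the error it produces uniformly in $x_0'$; this requires more than the approximation property of $\set P_\theta$ as stated. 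The paper sidesteps all of this by Lusin/Scorza Dragoni (choosing compacts on which $\theta$ and $Q$ are continuous) plus a frozen partition, and never needs the approximant to be periodic on any cell.

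Finally, your ``Main obstacle'' paragraph proposes to prove a continuous-dependence lemma in the volume fractions $\theta$, the ratio $\gamma$, and the partition, and then diagonalize. The paper does \emph{not} prove any such continuity; it only uses the continuity in the first variable $\mat M$, inequality~\eqref{bukal1}. The diagonalization is carried out via an Attouch-type device: one packs all constraints (weak-$*$ convergence of $\chi$, upper bound, lower bound with the truncated ball constraint $B(r_l)$) into a single countable sum $g_{n,m}$, checks $\limsup_m\limsup_n g_{n,m}=0$ using the upper and lower bounds~\eqref{4.eq:limlimsup_ub}--\eqref{4.eq:limliminf_lb} (these come from Lemma~\ref{3.14} and the Griso decomposition), and extracts $m(n)$ with $g_{n,m(n)}\to 0$. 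This is a genuinely different route from ``prove continuity in $\gamma$, $\theta$, and the partition'', and it is unclear the latter can be made to work; the admissible-corrector constraint $\|(\psi_1,\psi_2,h_n\psi_3)\|_{L^2}\le r$ does not vary continuously with the data in the way your sketch requires. Until these points are supplied — the proposition chain for $\set G_\theta=\set P_\theta$, the periodic-boundary-condition argument via Lemma~\ref{3.14}, and the $g_{n,m}$ diagonalization in place of a continuity lemma — the proposal has genuine gaps.
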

\begin{remark}
	Since we do not take into account possible periodic out-of-plane oscillations, 
	only weaker claim is given in $(i)$, i.e.~$\int_I \mu \, \dd x_3 = \theta$. 
	In the case when we allow only in-plane oscillations without changing material along $x_3$ 
	direction,  implication $(ii) \implies (i)$ can be proved on the whole sequence.  
\end{remark}
\begin{remark}
The limit energy density obtained in 
\cite{Vel14a} for the case of von K\'arm\'an plate has the same form as \eqref{bukal1b}, 
where $Q_i=D^2W_i(I)$ and $W_i$ are stored energy densities of $i$-th material, for $i=1,\dots,N$. 
Also, the limit energy density for the bending rod obtained in \cite{MaVe14a} has a similar form.
Thus, Theorem \ref{thm:2} applies to these cases as well. 
\end{remark}
In Section \ref{sec:2} we introduce notation and some preparation definitions and results 
used in the subsequent Sections \ref{sec:3} and \ref{sec:4}, which are devised to the proofs 
of Theorems \ref{thm:1} and \ref{thm:2}, respectively. Auxilliary results are listed in the appendix.

\section{General framework}\label{sec:2}

\subsection{Notation}\label{sec:not}
For a vector $x=(x_1,x_2,x_3)\in\R^3$ by $x' = (x_1,x_2)$ we denote its first two components. Consequently, by $\nabla'$
we denote the gradient with respect to the first two variables $\nabla' = (\pa_1, \pa_2)$, 
while the standard gradient is denoted by $\nabla$. The scaled gradient is given by 
$\nabla_h = (\pa_1, \pa_2, \frac{1}{h}\pa_3)$ for some $h>0$. By $B(x,r)$ we denote the ball of radius $r$ around point $x \in \R^n$. 
Vectors, vector valued functions, as well as matrices and matrix valued functions representing displacement fields or their
gradients are denoted by bold symbols, while their components are indexed in subscripts. 
With the symbol $\wedge$ we denote the cross product of two vectors from $\R^3$.
For a matrix $\mat M$ we denote its symmetric part by $\sym\mat M = (\mat M + \mat M^{\tau})/2$. 
Operator $:$ denotes contraction of two matrices, i.e.~$\mat M:\mat K = \sum_{i,j}M_{ij}K_{ij}$.
Natural inclusion of $\R^{2\times 2}$ into $\R^{3\times 3}$ is given by
\begin{equation*}
\imath(\mat M) :=\sum_{i,j=1}^2{M}_{ij}(\vect e_i\otimes \vect e_j)\,,
\end{equation*}
where $(\vect e_1, \vect e_2,\vect e_3)$ denotes the canonical basis of $\R^3$. By $\mat I$ we denote the identity matrix. 
If $A \subset \R^n$, we denote by $\mathbbm{1}_A$ the characteristic function of the set $A$ 
and by $|A|$  its Lebesgue measure. 
If $A$ and $B$ are subsets of $\R^n$, by $A \ll B$ we mean
that  the closure $\bar{A}$ is contained in the interior $\inte(B)$ of $B$. 
\subsection{Variational functionals} 
In the sequel, let $(h_n)_{n\in\N}$ denotes a sequence which monotonically decreases to zero, 
$A\subset\omega$ be an open subset and 
$\mat M\in \set S(\omega)$. Recall the definition of the variational functionals 
from the Introduction, which resemble the definition of lower and upper $\Gamma$-limits:
\begin{definition}\label{defodK}
\begin{align}
\Khm(\mat M,A) &= \inf\Big\{\liminf_{n\to\infty}\int_{\Omega} \label{2.def:Khm}
		Q^{h_n}(x,\imath(\mat M\mathbbm{1}_{A \times I}) + \nabla_{h_n}\vect\psi^{h_n})\dd x\ | \\
 		& \qquad\ ~\quad(\vect\psi^{h_n})_n \subset H^1(\Omega,\R^3),\, (\psi_1^{h_n},\psi_2^{h_n},{h_n}\psi_3^{h_n}) \to 0 \text{ strongly in }L^2(\Omega,\R^3)\Big\}\,;\nonumber
\end{align}
\begin{align}
\Khp(\mat M,A) &= \inf\Big\{\limsup_{n\to\infty}\int_{\Omega} \label{2.def:Khp}
		Q^{h_n}(x,\imath(\mat M\mathbbm{1}_{A\times I}) + \nabla_{h_n}\vect\psi^{h_n})\dd x\ | \\
		& \qquad\ ~\quad (\vect\psi^{h_n})_n \subset H^1(\Omega,\R^3),\, (\psi_1^{h_n},\psi_2^{h_n},{h_n}\psi_3^{h_n}) \to 0 \text{ strongly in }L^2(\Omega,\R^3)\Big\}\,.\nonumber
\end{align}
\end{definition}
\noindent The above infimization is taken over all sequences of vector fields 
$(\vect{\psi}^{h_n})_{n}\subset H^1(\Omega,\R^3)$ such that \\
$(\psi_1^{h_n},\psi_2^{h_n},{h_n}\psi_3^{h_n}) \to 0$ strongly in the $L^2$-topology. 
From the definition it immediately follows
\begin{equation} \label{trivijala}
 \Khpm(\mat M\mathbbm{1}_{A \times I},\omega)=\Khpm (\mat M,A), \textrm{ for all } \mat M \in \set S (\omega),\, A \subset \omega \textrm{ open}. 
 \end{equation}
\begin{remark} \label{2.rem:top}
\begin{enumerate}[(a)] 
  \item Let $\set N(0)$ denotes the family of all neighbourhoods of $0$ in the strong $L^2$-topology, then 
  we have the standard  characterizations:
\begin{align}\label{bukal111111}
	\Khm(\mat M,A) 
	& = \sup_{\set{U}\subset\set{N}(0)}\liminf_{n\to\infty}\set K_{h_n}(\mat M, A,\set U)\,;\\ \nonumber
	\Khp(\mat M,A) & = \sup_{\set{U}\subset\set{N}(0)}\limsup_{n\to\infty}\set K_{h_n}(\mat M, A,\set U)\,,
\end{align}
where
\begin{equation}
\set K_{h_n}(\mat M, A,\set U) = \inf_{\substack{\vect{\psi}\in H^1(\Omega,\R^3)\\ 
		(\psi_1,\psi_2,h_n\psi_3)\in\set{U}}}\int_{\Omega}
Q^{h_n}(x,\imath(\mat M\mathbbm{1}_A) + \nabla_{h_n}\vect\psi)\dd x\,.
\end{equation}

\item Using the standard diagonalization argument, one can prove that infima in (\ref{2.def:Khm}) and (\ref{2.def:Khp})
are actually attained. 
\item 
 Additionally one can assume that the test sequences are equal to zero on $\partial \omega \times I$. It can be easily seen that this does not change the proof of Lemma \ref{lemma:equi-int1} (see Remark \ref{rempromjena}).
\item The following definitions are given in \cite{MaVe14a,Vel14a} 
\begin{align*}
	\Khtm(\mat M,A) &= \inf\Big\{\liminf_{n\to\infty}\int_{A \times I} 
	Q^{h_n}(x,\imath(\mat M) + \nabla_{h_n}\vect\psi^{h_n})\dd x\ | \, (\vect\psi^{h_n})_n \subset H^1(A \times I,\R^3)\\
	& \qquad\ ~\quad \, (\psi_1^{h_n},\psi_2^{h_n},{h_n}\psi_3^{h_n}) \to 0 \text{ strongly in }L^2(A \times I,\R^3)\Big\}\,;\nonumber
\end{align*}
\begin{align*}
	\Khtp(\mat M,A) &= \inf\Big\{\limsup_{n\to\infty}\int_{A \times I} 
	Q^{h_n}(x,\imath(\mat M) + \nabla_{h_n}\vect\psi^{h_n})\dd x\ | \, (\vect\psi^{h_n})_n \subset H^1(A \times I,\R^3)\\
	& \qquad\ ~\quad \, (\psi_1^{h_n},\psi_2^{h_n},{h_n}\psi_3^{h_n}) \to 0 \text{ strongly in }L^2(A \times I,\R^3)\Big\}\,.
\end{align*} 
Here we used Definition \ref{defodK} in order to include perforated domains and domains with 
defects (see Remark \ref{explanation1}). In the case when the coercivity condition \eqref{2.eq:A1} holds, 
these two definitions are equivalent, and relation \eqref{bukal111111} remains valid with 
(after replacing $\Khpm$ with $\Khtpm$)
\begin{equation*}
\widetilde{\set K}_{h_n}(\mat M, A,\set U) = \inf_{\substack{\vect{\psi}\in H^1(A \times I,\R^3)\\ 
		(\psi_1,\psi_2,h_n\psi_3)\in\set{U}}}\int_{A \times I}
Q^{h_n}(x,\imath(\mat M) + \nabla_{h_n}\vect\psi)\dd x\,.
\end{equation*}
One can additionally assume zero boundary condition for test functions in the case when 
$A \subset \omega$ has Lipschitz boundary. 	In \cite{Vel14a}  $	\Khtm(\mat M,A)$ and $\Khtp(\mat M,A)$ 
were defined for every $\mat M \in L^2(\Omega)$, but here we restrict the definition to the set of 
possible weak limits $\set S(\omega)$  (in the first variable). 
\end{enumerate}
\end{remark}
 Obviously, $\Khm(\mat M,A) \leq \Khp(\mat M,A)$. The equality can be proven in the similar way as in
 \cite[Lemma 3.7(a)]{Vel14a}, but on a subsequence of $(h_n)_n$, and we will give the sketch of the proof.
The basic novelty, given in \cite{MaVe14a,Vel14a}, compared with standard $\Gamma$-convergence techniques, consists in separating 
the fixed limit field $\mat M$ from the relaxation field given by the sequence $(\vect{\psi}^{h_n})$ 
(correctors) and exploring the 
functional which has this additional variable (the set of possible weak limits). 
This enables us to give an abstract definition of the energy and to prove that it possesses a 
quadratic energy density (see Lemma \ref{lemma:equi-int1} and Proposition \ref{prop:Int_form}). 
One of the key properties, which also exploits this separation property, 
is the continuity of functionals $\set{K}^\pm_{(h_n)}$ with respect to the first variable 
(see the proof of Lemma \ref{lem:ocjena} in the appendix, cf.~also \cite[Lemma 3.4]{Vel14a}):
\begin{eqnarray} \label{bukal1}
   	\left|\Khpm(\mat M_1,A) - \Khpm(\mat M_2,A)\right| \leq C(\alpha,\beta)\|\mat M_1 - \mat M_2\|_{L^2}
   	\left(\|\mat M_1\|_{L^2} + \|\mat M_2\|_{L^2}\right)\,,
   	\quad \forall \mat M_1\,,\mat M_2\in \set S(\omega)\,.
\end{eqnarray}
 In the following we state the key operating lemma.

\begin{lemma}\label{lemma:equi-int1}
Under the coercivity and uniform boundedness assumption \eqref{2.eq:A1}, for every sequence $(h_n)_n$, $h_n\downarrow0$, there exists a subsequence, still denoted by $(h_n)_n$, 
which satisfies
\begin{eqnarray*}\label{2.def:Kh_D}
& &\Kh(\mat M,A) := \Khm(\mat M,A) = \Khp(\mat M,A)\,, 
\quad \forall \mat M\in \set S(\omega)\,,\ 
\ \forall A\subset\omega\,\text{open}.
\end{eqnarray*}
For every 
$\vect{M}\in \set S(\omega) $ there exists a sequence of correctors 
$(\vect{\psi}^{h_n})_n\subset H^1(\Omega,\R^3)$, $(\psi^{h_n}_1,\psi^{h_n}_2,h_{n}\psi_3^{h_n})\to 0$ 
strongly in the $L^2$-norm and such that for every open $A\subset\omega$ we have
\begin{equation}\label{2:eq.minseq}
\Kh(\mat M,A) = \lim_{n\to\infty}\int_{A\times I} 
						Q^{h_n}(x,\imath(\mat M)  + \nabla_{h_n}\vect\psi^{h_n})\dd x\,,
\end{equation}
and the following properties hold:
\begin{enumerate}[(a)]
  \item 		the following decomposition is satisfied 
  		\begin{equation} \label{korektori1}
  			\vect{\psi}^{h_n}(x) = \left(\begin{array}{c} 0 \\ 0 \\ \dfrac{\varphi^{h_n} (x')}{h_n}\end{array}
  			\right) 
  			-x_3\left(\begin{array}{c} \nabla'\varphi^{h_n}(x') \\ 0 \end{array} \right)+\tilde{\vect{\psi}}^{h_n}\,,
  		\end{equation}
  	i.e.,
  		\begin{equation}\label{bukall3}
  			\sym \nabla_{h_{n}}\vect{\psi}^{h_n} = -x_3\imath(\nabla'^2\varphi^{h_n})
  				+ \sym\nabla_{h_{n}}\tilde{\vect{\psi}}^{h_n}\,,
  		\end{equation}
  		where 
  		\begin{align*}
  		{\varphi}^{h_n} \to 0\ &\text{ strongly in } H^1(\omega)\,,\quad
  		\tilde{\vect{\psi}}^{h_n} \to 0\ \text{ strongly in } L^2(\Omega,\R^3)\,,\quad and\\
  		&\limsup_{n\to\infty}\left(\|{\varphi}^{h_n}\|_{H^2} + \|\nabla_{h_{n}}\tilde{\vect{\psi}}^{h_n}\|_{L^2}
  		\right) \leq C(\Omega)(\|\mat M\|_{L^2}^2 + 1)\,, 
  		\end{align*}
 for some $C(\Omega)>0$;
  \item sequence $(|\sym\nabla_{h_{n}}\vect{\psi}^{h_n}|^2)_n$ is equi-integrable;
 \item for every $\mat M \in \set S(\omega) $ and $A \subset \omega$ open it holds  
  \begin{equation*} \label{minimabukal} 
 \Kh(\mat M, A)= \Kht(\mat M, A):=\Khtm (\mat M, A)=\Khtp (\mat M, A)\,;
  \end{equation*}
\item if for $\mat M \in \set S (\omega)$ a sequence 
$(\vect \zeta^{h_n})_n \subset H^1(\Omega, \R^3)$ satisfies \eqref{2:eq.minseq} for $A=\omega$ and 
$(\zeta^{h_n}_1,\zeta^{h_n}_2,h_{n}\zeta_3^{h_n})\to 0$ strongly in the $L^2$-norm, then 
\begin{equation*}\label{bukal1000}
\| \sym \nabla_{h_n} \vect \zeta^{h_n}-\sym \nabla_{h_n}\vect \psi^{h_n}\|_{L^2} \to 0\,. 
\end{equation*}
\end{enumerate}
\end{lemma}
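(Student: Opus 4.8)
The plan is to proceed in several stages, following the architecture of \cite[Lemma 3.7]{Vel14a} but adapted to the present setting (in particular allowing the weaker definition of $\Khpm$ from Definition \ref{defodK}, which is needed to treat perforated domains). First I would establish the equality $\Khm(\mat M,A)=\Khp(\mat M,A)$ along a subsequence. The key point is that the family $\{\Khpm(\cdot,A)\}$ is uniformly Lipschitz in $\mat M$ on bounded subsets of $\set S(\omega)$ by estimate \eqref{bukal1}, so by a separability argument it suffices to achieve the equality on a countable dense subset of $\set S(\omega)$ and on a countable base of open sets $A$ with Lipschitz boundary; then it extends to all $\mat M\in\set S(\omega)$ and all open $A$ by the continuity \eqref{bukal1} together with an inner-approximation/measure-theoretic argument (the functionals are monotone and inner regular in $A$). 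To get equality on the countable dense family, I would use a diagonal extraction: for each fixed $(\mat M,A)$ in the family one selects a further subsequence along which $\liminf$ in the definition of $\Khm$ is actually a limit and equals the $\limsup$-type quantity; iterating over the countable family and diagonalizing yields a single subsequence doing the job simultaneously. This uses Remark \ref{2.rem:top}(b), that the infima are attained.

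Next I would produce the correctors and prove (a). Fix $\mat M\in\set S(\omega)$. By the attainment of the infimum for $A=\omega$, pick a sequence $(\vect\psi^{h_n})$ realizing $\Kh(\mat M,\omega)$ with $(\psi_1^{h_n},\psi_2^{h_n},h_n\psi_3^{h_n})\to0$ in $L^2$. Applying Griso's decomposition (Lemma \ref{app:lem.limsup}) to $\vect\psi^{h_n}$ — or rather to the displacement it represents — extracts the structure \eqref{korektori1}: the in-plane/out-of-plane rigid parts are governed by a scalar potential $\varphi^{h_n}$ (this is exactly the $x_3$-affine part of the symmetrized scaled gradient), leaving a genuinely lower-order remainder $\tilde{\vect\psi}^{h_n}$. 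The normalization $(\psi_1^{h_n},\psi_2^{h_n},h_n\psi_3^{h_n})\to0$ in $L^2$ forces $\varphi^{h_n}\to0$ in $H^1(\omega)$ and $\tilde{\vect\psi}^{h_n}\to0$ in $L^2(\Omega,\R^3)$, and the coercivity bound \eqref{2.eq:A1} together with the energy bound $\Kh(\mat M,\omega)\le C(\|\mat M\|_{L^2}^2+1)$ (obtained by testing with $\vect\psi\equiv0$) gives the stated a priori estimate on $\|\varphi^{h_n}\|_{H^2}+\|\nabla_{h_n}\tilde{\vect\psi}^{h_n}\|_{L^2}$. That the same sequence realizes $\Kh(\mat M,A)$ for \emph{every} open $A$, i.e.\ \eqref{2:eq.minseq}, is the delicate localization point: one shows the ``energy measure'' $E_n(A):=\int_{A\times I}Q^{h_n}(x,\imath(\mat M)+\nabla_{h_n}\vect\psi^{h_n})\,\dd x$ converges (after a further diagonal subsequence over a countable base) to something that is both $\ge\Kh(\mat M,\cdot)$ (by definition, using \eqref{trivijala}) and $\le\Kh(\mat M,\cdot)$ (by a cut-off/gluing argument combined with the additivity $\Kh(\mat M,\omega)=\Kh(\mat M,A)+\Kh(\mat M,\omega\setminus\bar A)$, which itself follows from subadditivity plus the global optimality of $\vect\psi^{h_n}$).

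For part (b), equi-integrability of $(|\sym\nabla_{h_n}\vect\psi^{h_n}|^2)$, I expect to use the \emph{simplified} argument advertised after Lemma \ref{lemma:equi-int1}: a truncation of the correctors. One truncates $\vect\psi^{h_n}$ (more precisely its Griso components) at level $\lambda$ on the bad set where gradients are large; by a standard Lipschitz-truncation/Acerbi--Fusco type estimate the truncated sequence differs from the original on a set of small measure and still has the right normalization, so by near-optimality its energy cannot drop, which forces the contribution of the large-gradient set to be uniformly small — this is precisely equi-integrability. Here the quadratic (hence $p=2$) growth and the Lipschitz bound \eqref{razlika} make the error terms controllable. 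Part (c), $\Kh=\Kht$, is immediate under coercivity: $\Khtpm\ge\Khpm$ trivially since restricting to $A\times I$ only decreases the domain of integration, while the opposite inequality is obtained by extending an optimal sequence for $\Kht(\mat M,A)$ by zero outside $A\times I$ (legitimate since $\mat M\mathbbm1_{A\times I}$ is what enters $\Khpm$, and on $(\omega\setminus A)\times I$ the integrand is $Q^{h_n}(x,0)=0$) — this is the content of Remark \ref{2.rem:top}(d). Finally, part (d), the ``uniqueness'' of optimal correctors up to symmetrized-gradient error: if $(\vect\zeta^{h_n})$ also realizes $\Kh(\mat M,\omega)$, consider the convex combination $\tfrac12(\vect\psi^{h_n}+\vect\zeta^{h_n})$; by convexity of $Q^{h_n}$ and the parallelogram identity,
\begin{equation*}
\int_\Omega Q^{h_n}\!\Big(x,\tfrac{\nabla_{h_n}\vect\psi^{h_n}+\nabla_{h_n}\vect\zeta^{h_n}}{2}+\imath(\mat M)\Big)\,\dd x
+\frac{\alpha}{4}\int_\Omega|\sym\nabla_{h_n}\vect\psi^{h_n}-\sym\nabla_{h_n}\vect\zeta^{h_n}|^2\,\dd x
\le \frac12 E_n^\psi+\frac12 E_n^\zeta,
\end{equation*}
and since the left-hand integral is $\ge\Kh(\mat M,\omega)-o(1)$ (the averaged sequence is again admissible) while the right-hand side $\to\Kh(\mat M,\omega)$, the gap term must vanish. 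The main obstacle I anticipate is the localization step in (a)—ensuring one single corrector sequence is simultaneously optimal for all open $A$, which requires the careful measure-theoretic diagonalization over a countable base together with the additivity of $\Kh(\mat M,\cdot)$; the equi-integrability truncation in (b) is the second most technical point, but it is exactly where the promised simplification over \cite{MaVe14a,Vel14a} enters.
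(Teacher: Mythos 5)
Your overall architecture---diagonal extraction over a countable dense family in $\set S(\omega)$, Griso decomposition for the correctors, the Lipschitz estimate \eqref{bukal1}, gluing/additivity for the localization \eqref{2:eq.minseq}, and the parallelogram--coercivity trick for (d)---matches the paper's proof closely, and your sketches of (a), (b), (d) are essentially the ones used (with (b) and (d) following \cite[Lemma 2.9]{MaVe14a}). The genuine gap is in (c), which you call ``immediate under coercivity.'' The inequality you label trivial runs the wrong way: for any $\vect\psi$ admissible in the definition of $\Khpm$, its restriction to $A\times I$ is admissible for $\Khtpm$ and has \emph{no larger} energy, since the dropped contribution $\int_{(\omega\setminus A)\times I} Q^{h_n}(x,\nabla_{h_n}\vect\psi)\,\dd x$ is nonnegative; so what is trivial is $\Khtpm\leq\Khpm$, not $\geq$. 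More seriously, the converse inequality $\Kh\leq\Khtm$ is not obtained by ``extending an optimal sequence for $\Kht(\mat M,A)$ by zero outside $A\times I$'': a near-minimizing sequence in $H^1(A\times I,\R^3)$ has no reason to vanish on $\partial A\times I$, so its zero extension is not in $H^1(\Omega,\R^3)$. Making the extension legitimate requires first replacing the test sequence by one with equi-integrable $|\sym\nabla_{h_n}\cdot|^2$ (Lemmas \ref{ekvi1}, \ref{ekvi2}), then truncating it to vanish in a collar of $\partial A\times I$ (Lemma \ref{nulizacija}), doing this for $A$ with $C^{1,1}$ boundary, and finally passing to general open $A$ by inner approximation and \eqref{bukal1}. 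So (c) is a consequence of the localization identity, \eqref{stefan11}, and the equi-integrability machinery---not a standalone observation.

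A related imprecision occurs in your treatment of \eqref{2:eq.minseq}: you assert $\lim_n E_n(A)\geq\Kh(\mat M,A)$ ``by definition, using \eqref{trivijala}.'' What restricting $\vect\psi^{h_n}$ to $A\times I$ gives you by definition is $\Khtm(\mat M,A)\leq\liminf_n E_n(A)$; upgrading this to the desired bound uses precisely $\Kh=\Kht$, i.e.\ part (c), which you are deferring---a circularity. In the paper's argument $\Kh(\mat M,A)\leq\lim_n E_n(A)$ comes from the truncated-corrector comparison \eqref{nej3}, while the reverse inequality comes from the gluing contradiction against global optimality of the corrector on $\omega$; both require the equi-integrable replacement and boundary-truncation lemmas you only hint at, and the case distinction between $|\partial A|=0$ and general $A$ (resolved via the equi-integrability of part (b)). Once those technical tools are invoked explicitly, your plan would close up.
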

\noindent The lemma can be proved by 
adapting the proof of \cite[Lemma 2.9]{MaVe14a}. Below we give
simple proof, which can be easily adapted to perforated domains and domains 
with defects. 

\begin{proof}
 Take a  
 countable set $\{ \mat M_j\}_{j \in N}\subset  \set S(\omega)$,
which is dense in $\set S(\omega)$ in the $L^2$-norm. 
Without loss of generality we assume that $\omega$ has smooth boundary. 
In the case when this is not satisfied we simply take $\tilde \omega \supset \omega$ 
with smooth boundary and extend each $Q^{h_n}$  on $\tilde \omega \backslash \omega$ by a 
constant quadratic form that satisfies coercivity and boundedness property.  
By a diagonal argument one can construct a subsequence, still denoted by $(h_n)_n$, 
such that on the subsequence we have 
\begin{equation}\label{bukal2}
\Kh(\mat M_j,\omega) := \Khm(\mat M_j,\omega) = \Khp(\mat M_j,\omega)\,, 
\quad \forall j \in \N\,,
\end{equation}
and such that the equality \eqref{2:eq.minseq} (for $A=\omega$) is valid for some sequence 
of correctors $(\vect \psi_{\mat M_j}^{h_n})_n \subset H^1(\Omega, \R^3)$ 
which satisfies $(\psi^{h_n}_{\mat M_j,1},\psi^{h_n}_{\mat M_j,2},h_{n}\psi_{\mat M_j,3}^{h_n})\to 0$ 
strongly in $L^2$.
Using the Griso's decomposition we can assume that they satisfy the property $(a)$ 
(see Lemma \ref{igor2}). This possibly requires a change of the sequence of correctors. 
By inequality \eqref{bukal1} it follows that 
 \begin{equation}
 \Kh(\mat M,\omega) := \Khm(\mat M,\omega) = \Khp(\mat M,\omega)\,, 
 \quad \forall \mat M \in \set S(\omega)\,,
 \end{equation}
and by the definition of $\Khp$ we can find a sequence of correctors 
$(\vect \psi_{\mat M}^{h_n})_n \subset H^1(\Omega,\R^3)$, 
which satisfies $(\psi^{h_n}_{\mat M,1},\psi^{h_n}_{\mat M,2},h_{n}\psi_{\mat M,3}^{h_n})\to 0$ 
strongly in $L^2$. Firstly, we prove equality \eqref{2:eq.minseq} for $A \subset \omega$ open 
with $|\partial A|=0$ or $|\partial A|>0$ and $\mat M=0 $ on $\partial A \times I$.
Suppose that the equality is not satisfied, then 
take an arbitrary subsequence of $(h_n)_n$, still denoted by $(h_n)_n$, such that there exists 
\begin{equation}
\label{postojilimes}
  \lim_{n\to\infty}\int_{A\times I} 
Q^{h_n}(x,\imath(\mat M)  + \nabla_{h_n}\vect\psi^{h_n}_{\mat M})\dd x\,. 
\end{equation}
 Notice that from the definition of $\Kh$, by testing with zero functions and using \eqref{2.eq:A1}, we can assume that for every $n \in \N$ 
\begin{equation}\label{omedjenost}
\|\sym \nabla_{h_n} \vect \psi_{\mat M}^{h_n}\|_{L^2} \leq C(\alpha,\beta) \|\mat M\|_{L^2}\,.
 \end{equation}
 Applying the Griso's decompostion and replacing the sequence of correctors, we obtain the decomposition in $(a)$. 
 Next, using Lemma \ref{ekvi1} and Lemma \ref{ekvi2} from Appendix, we replace 
 (on a subsequence) second gradients and scaled gradients by the corresponding 
 equi-integrable sequences, and form the sequence of correctors 
 $(\bar{\vect \psi}^{h_n}_{\mat M})_n \subset H^1(\Omega,\R^3)$ according to \eqref{korektori1}. 
 Notice that for the new sequence we have the following properties:
 \begin{equation}\label{svojstvanovih}
 (|\sym \nabla_{h_n} \bar{\vect \psi}^{h_n}_{\mat M}|^2)_n \textrm{ is equi-integrable and } 
 |\{\sym \nabla_{h_n} \bar{\vect\psi}^{h_n}_{\mat M} \neq \sym \nabla_{h_n} \vect\psi^{h_n}_{\mat M}\}|\to 0
  \textrm{ as } n \to \infty\,.
 \end{equation}
 As a consequence of \eqref{svojstvanovih}, on every subsequence, 
 still denoted by $(h_n)_n$, where the limits exist, the following inequalities hold:
 \begin{eqnarray}
 	\label{nej1}\lim_{n\to\infty}\int_{A\times I} 
 	Q^{h_n}(x,\imath(\mat M)  + \nabla_{h_n}\bar{\vect\psi}^{h_n}_{\mat M})\dd x & \leq & \lim_{n\to\infty}\int_{A\times I} 
 	Q^{h_n}(x,\imath(\mat M)  + \nabla_{h_n}\vect\psi^{h_n}_{\mat M})\dd x\,; \\
 	\label{nej2} \lim_{n\to\infty}\int_{(\omega \backslash \bar{A})\times I} 
 	Q^{h_n}(x,\imath(\mat M)  + \nabla_{h_n}\bar{\vect\psi}^{h_n}_{\mat M})\dd x & \leq & \lim_{n\to\infty}\int_{(\omega \backslash \bar{A})\times I} 
 	Q^{h_n}(x,\imath(\mat M)  + \nabla_{h_n}\vect\psi^{h_n}_{\mat M})\dd x\,; \\ \label{nej10}
 	\lim_{n\to\infty}\int_{\partial A\times I} 
 	Q^{h_n}(x,\imath(\mat M)  + \nabla_{h_n}\bar{\vect\psi}^{h_n}_{\mat M})\dd x &\leq& \lim_{n\to\infty}\int_{\partial A\times I} 
 	Q^{h_n}(x,\imath(\mat M)  + \nabla_{h_n}\vect\psi^{h_n}_{\mat M})\dd x\,.
 \end{eqnarray}
 From these relations, using the definition of $\Kh (\mat M,\omega)$, we conclude equalities in 
 \eqref{nej1}--\eqref{nej10}.
 Using Lemma \ref{nulizacija} from Appendix, we truncate the sequence 
 $(\bar{\vect \psi}^{h_n}_{\mat M})_n$ in set $A \times I$ such that the new sequence, 
 denoted by $(\bar{\vect \psi}^{h_n}_{0,\mat M})_n$, equals zero in a neighbourhood of 
 $\partial A \times I$ and zero value is assigned outside $A \times I$. 
 Next, we perform another truncation of the sequence $(\bar{\vect \psi}^{h_n}_{\mat M})_n$,
 but now in set $(\omega \backslash \bar{A}) \times I$, such that the new sequence, 
 denoted by $(\bar{\vect \psi}^{0,h_n}_{\mat M})_n$ equals zero in a neighbourhood of 
 $\partial (\omega \backslash \bar{A})\times I$ and zero value is assigned in $\bar{A}$. 
 By the definition of $\Kh(\mat M,A)$ and by testing with $(\bar{\vect \psi}^{h_n}_{0,\mat M})_n$  
 we conclude 
 \begin{equation} \label{nej3}
  \Kh(\mat M,A) \leq \lim_{n\to\infty}\int_{A\times I} 
 Q^{h_n}(x,\imath(\mat M)  + \nabla_{h_n}\bar{\vect\psi}^{h_n}_{0,\mat M})\dd x=\lim_{n\to\infty}\int_{A\times I} 
 Q^{h_n}(x,\imath(\mat M)  + \nabla_{h_n}\vect\psi^{h_n}_{\mat M})\dd x\,. 
 \end{equation}
Further on we take a sequence of correctors (on a subsequence) for 
$\mat M_A:=\mat M \mathbbm{1}_{A \times I}$, denoted by 
 $\vect\psi^{h_n}_{\mat M_A}$, find the equi-integrable substitution and perform the truncation 
 (in set $A \times I$ in a neighbourhood of $\partial A \times I$) 
 to obtain the new sequence of correctors $(\bar{\vect \psi}^{h_n}_{\mat M_A})_n$. Assigning 
 the zero value outside of $A\times I$ we easily conclude that (on a subsequence) 
 $$\Kh(\mat M,A)= \lim_{n\to\infty}\int_{A\times I} 
 Q^{h_n}(x,\imath(\mat M)  + \nabla_{h_n}\bar{\vect\psi}^{h_n}_{\mat M_A})\dd x\,.$$ 
 Define the new sequence of correctors by
 $$ 
 \dot{\vect{\psi}}^{h_n}_{\mat M}(x) = 
 \left\{\begin{array}{lr} \bar{\psi}^{h_n}_{\mat M_A}(x)\,, & \textrm{ if } x \in A \times I\,; \\   
 \bar{\psi}^{0,h_n}_{\mat M}(x)\,, & \textrm{ if } x \in (\omega \backslash \bar{A})\times I\,; \\ 
 0\, ,&  \textrm{ if } x \in \partial A \times I\,.   \end{array}  \right.     
 $$
 If the strict inequality in \eqref{nej3} would hold, then we would have 
 (because $\mat M \mathbbm{1}_{\partial A \times I}=0$ for a.e.~$x \in \Omega$)
 $$ \lim_{n\to\infty}\int_{\Omega} 
 Q^{h_n}(x,\imath(\mat M)  + \nabla_{h_n}\dot{\vect\psi}^{h_n}_{\mat M})\dd x<\Kh(\mat M,\omega)\,, $$
 which would yield a contradiction. The final claim is the consequence of the arbitrariness of the 
 subsequence, which we chose such that the limit \eqref{postojilimes} exists. 
 Notice that on the way, as a consequence of minimality, we also  proved that
 \begin{equation} \label{stefan11}
 \lim_{n \to \infty}\int_{(\omega \backslash A)\times I} 
 Q^{h_n}(x,\imath(\mat M)  + \nabla_{h_n}\vect\psi^{h_n}_{\mat M_A})\dd x=0\,,
 \end{equation}
 for every sequence of correctors $(\vect\psi^{h_n}_{\mat M_A})_{n\in \N}\subset H^1(\Omega,\R^3)$ 
 for $\mat M_A$, where $A \subset \omega$ open. Namely, we 
 can again take an arbitrary convergent subsequence, replace the correctors with equi-integrable 
 sequences, perform the truncation in a neighbourhood of $\partial A$ and assign values zero 
 outside of $A$. This would provide the argument in the same way as above. 
 The property $(b)$ is a consequence of the minimality property in the definition of $\Kh$, 
 see \cite[Lemma 2.9]{MaVe14a} (the proof uses the pointwise coercivity in \eqref{2.eq:A1}). 
 From the equi-integrability property it follows that equality \eqref{2:eq.minseq} is valid for every $A \subset \omega$ open and $\mat{M} \in \set S(\omega)$. 
To prove (c) we conclude as follows. Firstly, from \eqref{2:eq.minseq} and  \eqref{stefan11} it 
follows that $\Khtp(\mat M,A) \leq \Kh (\mat M, A)$. Next we prove that 
$\Khtm(\mat M, A)\geq \Kh (\mat M, A)$. 
If $A \subset \omega$ has $C^{1,1}$ boundary, then one can take a sequence of test functions 
for $\Khtm(\mat M, A)$ and replace it in the same way as above to conclude the inequality.
If $A \subset \omega$ is an arbitrary open set, then
from the definition of $\Khtpm$, we can easily conclude that 
\begin{equation*} 
\Khtm(\mat M\mathbbm{1}_{D\times I},A)=\Khtp(\mat M\mathbbm{1}_{D \times I},A)
=\Kh(\mat M,D)\,, \quad \forall \mat M \in \set S(\omega),\ \forall D \ll A\,,\ D \textrm{ with } C^{1,1} \textrm{ boundary},
\end{equation*}
(for more details see beginning of the proof of \cite[Lemma 3.7]{Vel14a}). From the latter, 
utilizing inequality \eqref{bukal1} for $\Kh$ and $\Khtpm$, which can be proved analogously, 
and \eqref{trivijala}  we conclude the claim $(c)$.
For the property $(d)$, see the proof of \cite[Lemma 2.9]{MaVe14a}.
\end{proof}

\begin{remark}\label{rempromjena}
	 One can, by the truncation argument if necessary, 
	 assume that for every $\mat M \in \set S(\omega)$, $n \in \N$, $\vect \psi^{h_n}=0$ 
	 in a neighbourhood of $\partial \omega \times I$, i.e.~$\varphi^{h_n}=0$ in a neighbourhood 
	 of $\partial \omega$ and $\tilde{\vect{\psi}}^{h_n}=0$ in a neighbourhood of 
	 $\partial \omega \times I$. To see that, one can first take such correctors for a countable 
	 set $\{\mat M_j\}\subset \set S(\omega) $, as at the beginning of the previous proof,  
	 then using the diagonalization argument and inequality \eqref{bukal1} 
	 construct a sequence of correctors for every $\mat M \in \set S(\omega)$, which consists of 
	 correctors for the sequence $\{\mat M_j\}$ (see \cite[Lemma 2.10]{MaVe14a}).
\end{remark}
We make the following assumption on which we will refer when necessary. 
\begin{assumption}\label{2:ass}
	For a sequence $(h_n)_n$ monotonically decreasing to zero we assume that it 
	satisfies the claim of Lemma \ref{lemma:equi-int1}.
\end{assumption}
\begin{remark}\label{bukal100000}
Assuming the Assumption \ref{2:ass} and using the representation \eqref{2:eq.minseq}, for every $\mat M \in \set S(\omega) $ we easily deduce the following:
	\begin{enumerate}[a)]
	  \item $\Kh(\mat M, \cdot )$ is subadditive  on open subsets of $\omega$, 
	  		i.e.~$\Kh(\mat M, A) \leq \Kh(\mat M,A_1)+\Kh(\mat M, A_2)$ for all open sets 
	  		$A, \ A_1,\ A_2 \subset \omega$ with $A \subset A_1 \cup A_2$;
	  \item $\Kh(\mat M, \cdot )$ is superadditive on open subsets of $\omega$,
	  		i.e.~$\Kh(\mat M,A) \geq \Kh(\mat M,A_1)+\Kh(\mat M,A_2)$ for all open sets 
	  		$A, \ A_1,\ A_2 \subset \omega$ with $A_1 \cup A_2 \subset A$ and $A_1 \cap A_2 =\emptyset$;
	  \item $\Kh(\mat M, \cdot)$ is monotone, i.e.~$\Kh (\mat M,A_1) \leq \Kh(\mat M, A_2)$, 
	  	for all open sets $A_1 \subset A_2 \subset \omega$, and inner regular, i.e. 		
		$$\Kh(\mat M,A)=\sup\{\Kh(\mat M,B): \, B \ll A\,,\ B\text{ open} \}\,, 		
		\quad \forall A \subset \omega \text{ open}\,.$$
	\end{enumerate}
	These properties imply that there exists a Borel measure $\mu_{\mat M}:\mathcal{B} \to [0,+\infty]$, 
	on the family $\mathcal{B}$ of Borel subsets of $\omega$,
 	such that $\Kh(\mat M,A)= \mu_{\mat M}(A)$ for every open set $A \subset \omega$ 
 	(see \cite[Theorem 14.23]{DMa93}). Moreover, the following properties of $\Kh(\cdot,\cdot)$ 
 	are easily deduced (see also the proof of \cite[Lemma 3.7]{Vel14a}):
		\begin{enumerate}[a)]
		  \setcounter{enumi}{3}
		  \item (homogeneity)
		  		$$ \Kh(t\mat M,A) =t^2 \Kh(\mat M,A),\quad \forall t \in \R\,,\ 
		  		\forall A \subset \omega \text{ open}\,,\ \forall \mat M \in \set S(\omega)\,; $$
		  \item (paralelogram inequality)
		  		\begin{align*}
		  			\Kh(\mat M_1+\mat M_2,A) + \Kh(\mat M_1-\mat M_2,A) &\leq 2\Kh(\mat M_1,A) 
		  				+ 2\Kh(\mat M_2,A)\,,\\
		  			&\qquad \qquad \forall A \subset \omega \text{ open}\,,\  
		  			\forall \mat M_1, \mat M_2 \in \set S(\omega)\,.
		  		\end{align*}
		\end{enumerate}
\end{remark}

\noindent Finally, we provide the result on integral representation of the variational functional, whose proof
follows the lines of the proof of \cite[Proposition 2.9]{Vel14a}, hence we omit it here. 
The key properties for proving that the energy density is a quadratic form are d) and f) from the above remark.

\begin{proposition}[Integral representation of $\Kh$]\label{prop:Int_form}
Let $(h_n)_n$, $h_n\downarrow0$, satisfies Assumption \ref{2:ass}. 
There exists a map $Q^0:\omega\times\R_{\sym}^{2\times2}\times\R_{\sym}^{2\times2} \to \R$ 
(depending on $(h_n)_n$)
such that for every open subset $A\subset\omega$ and every 
$\vect{M}\in \set S(\omega)$
\begin{equation}\label{2:prop:intrep}
\Kh(\mat M,A) = \int_A Q^0(x',\mat M_1(x'),\mat M_2(x'))\,\dd x'\,.
\end{equation}
Moreover, for a.e.~$x'\in\omega$, $Q^0(x',\cdot,\cdot)$ is uniformly bounded and coercive quadratic 
form satisfying 
\begin{eqnarray} \label{bbbbbbbb}
\textrm{(coercivity)}& & \frac{\alpha}{12} (|\sym \mat  M_1|^2+|\sym \mat M_2|^2) 
\leq Q^0 (x',\mat M_1,\mat M_2)\,; \\ \nonumber 
\textrm{(uniform boundedness)}& & Q^0 (x',\mat M_1,\mat M_2) 
\leq \beta (|\sym \mat M_1|^2+|\sym \mat M_2|^2)\,, \\ & & \nonumber\hspace{+10ex} 
\forall \mat M_1,\mat M_2 \in \R^{2 \times 2}\,, \textrm{ for a.e.~} x' \in \omega\,.
 \end{eqnarray} 
\end{proposition}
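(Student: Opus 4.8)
The plan is to obtain the integral representation \eqref{2:prop:intrep} from the abstract measure-theoretic representation theorem (\cite[Theorem 14.23]{DMa93}) applied to the Borel measure $\mu_{\mat M}$ from Remark \ref{bukal100000}, and then to upgrade the resulting density to a quadratic form using homogeneity (d) and the parallelogram inequality (f), together with the continuity estimate \eqref{bukal1}. More precisely, I would first argue that for a fixed $\mat M=\mat M_1+x_3\mat M_2\in\set S(\omega)$, since $\Kh(\mat M,\cdot)$ is a nonnegative, subadditive, superadditive, inner regular set function on open subsets of $\omega$ (Remark \ref{bukal100000}(a)--(c)), it extends to a Borel measure $\mu_{\mat M}$ absolutely continuous with respect to Lebesgue measure on $\omega$ (absolute continuity follows from the upper bound $\Kh(\mat M,A)\le\beta\int_{A\times I}|\sym\imath(\mat M)|^2\,\dd x\le C\|\mat M\|_{L^2(A)}^2$, obtained by testing with zero correctors). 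Hence $\mu_{\mat M}(A)=\int_A g_{\mat M}(x')\,\dd x'$ for some $g_{\mat M}\in L^1(\omega)$, i.e.\ $\Kh(\mat M,A)=\int_A g_{\mat M}(x')\,\dd x'$.

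Next I would show that $g_{\mat M}(x')$ depends on $\mat M$ only through the pointwise values $(\mat M_1(x'),\mat M_2(x'))$, which is where the localization/continuity estimate \eqref{bukal1} enters. The standard device is: for $\mat N_1,\mat N_2\in\R^{2\times2}_{\sym}$ constant and a Lebesgue point $x_0'$ of $g_{\mat M}$, compare $\mat M$ with the constant field $\mat N_1+x_3\mat N_2$ on small balls $B(x_0',r)$; using \eqref{bukal1} on $B(x_0',r)$ and inner regularity, divide by $|B(x_0',r)|$ and let $r\downarrow0$ to conclude that $g_{\mat M}(x_0')$ equals a function of $(\mat M_1(x_0'),\mat M_2(x_0'))$ alone, up to a null set; then define $Q^0(x',\mat N_1,\mat N_2)$ as this limit. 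One must check measurability of $Q^0$ in $x'$ (select a countable dense family of constant pairs, use \eqref{bukal1} to pass to all pairs), and that the resulting formula \eqref{2:prop:intrep} holds for all open $A$ and all $\mat M\in\set S(\omega)$, by the $L^2$-density argument already used in Lemma \ref{lemma:equi-int1} and estimate \eqref{bukal1}.

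Having $Q^0(x',\cdot,\cdot)$ defined pointwise, the quadratic-form structure is read off from Remark \ref{bukal100000}: homogeneity (d) gives $Q^0(x',t\mat N_1,t\mat N_2)=t^2Q^0(x',\mat N_1,\mat N_2)$, and the parallelogram inequality (f), applied on balls $B(x_0',r)$ with constant fields and divided by $|B(x_0',r)|$, yields $Q^0(x',\mat N_1+\mat N_2,\cdot)+Q^0(x',\mat N_1-\mat N_2,\cdot)\le 2Q^0(x',\mat N_1,\cdot)+2Q^0(x',\mat N_2,\cdot)$; combined with homogeneity this forces the associated symmetric bilinear form to exist, so $Q^0(x',\cdot,\cdot)$ is a genuine quadratic form (the Jordan--von Neumann argument; here one uses that $t\mapsto Q^0$ is continuous, which follows from \eqref{bukal1}). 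Finally, the bounds \eqref{bbbbbbbb} come from testing: the upper bound $Q^0\le\beta(|\sym\mat M_1|^2+|\sym\mat M_2|^2)$ by using zero correctors and computing $\int_I\beta|\imath(\mat M_1+x_3\mat M_2)|^2\,\dd x_3=\beta(|\mat M_1|^2+\tfrac1{12}|\mat M_2|^2)\le\beta(|\mat M_1|^2+|\mat M_2|^2)$; the coercivity $\tfrac{\alpha}{12}(|\sym\mat M_1|^2+|\sym\mat M_2|^2)\le Q^0$ from the coercivity of $Q^{h_n}$ applied to the decomposition \eqref{bukall3} of any corrector sequence, using that $\int_I(\mat M_1+x_3\mat M_2-x_3\nabla'^2\varphi)$-type terms decouple the $x_3$-even and $x_3$-odd parts and the $\int_I x_3^2\,\dd x_3=\tfrac1{12}$ weight on the bending part, while the in-plane correctors only help the membrane part.

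The main obstacle I expect is the localization step --- proving that $g_{\mat M}(x')$ really is a pointwise function of $(\mat M_1(x'),\mat M_2(x'))$ and that $Q^0$ so defined is jointly measurable and independent of the chosen corrector sequences. This is delicate precisely because of the ``non-standard'' $\Gamma$-convergence with the extra fixed-field variable $\mat M$: one cannot simply quote De Giorgi--Letta, but must exploit the continuity estimate \eqref{bukal1} and the relation \eqref{trivijala} carefully, much as in \cite[Proposition 2.9]{Vel14a}. Everything else --- the measure extension, homogeneity, parallelogram law, and the explicit constants in \eqref{bbbbbbbb} --- is routine once this is in place, and since the argument is a direct transcription (simplified to the linear setting) of \cite[Proposition 2.9]{Vel14a}, I would in the paper simply refer to that proof rather than reproduce it.
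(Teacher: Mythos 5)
Your approach matches the paper's own treatment, which simply cites \cite[Proposition 2.9]{Vel14a} and points to the homogeneity and parallelogram properties of Remark \ref{bukal100000} as the key tools; the measure-theoretic representation via \cite[Theorem 14.23]{DMa93}, localization through the Lipschitz estimate \eqref{bukal1} and Lebesgue points, and the quadratic-form upgrade are exactly the intended route. One small imprecision: the coercivity constant $\alpha/12$ does not come from a pointwise $x_3$-parity ``decoupling'' (the corrector's even and odd parts do not vanish pointwise), but rather from weak lower semicontinuity of the $L^2$-norm applied to the in-plane $2\times2$ block of $\imath(\mat M)+\sym\nabla_{h_n}\vect\psi^{h_n}$, which converges weakly to $\mat M_1+x_3\mat M_2$ because $\nabla'^2\varphi^{h_n}\rightharpoonup0$ and $\sym\nabla'(\tilde\psi_1^{h_n},\tilde\psi_2^{h_n})\rightharpoonup0$; the factor $1/12=\int_I x_3^2\,\dd x_3$ then appears in $\int_I|\mat M_1+x_3\mat M_2|^2\,\dd x_3=|\mat M_1|^2+\tfrac1{12}|\mat M_2|^2$, and one then divides by $|B(x_0',r)|$ and lets $r\downarrow0$.
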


\begin{remark}
It is enough to prove equality (\ref{2:prop:intrep}) on countable dense families of open $C^{1,1}$ 
subsets of $\omega$ and symmetric $2\times2$ matrices. 
In fact this will be used in Section \ref{sec:4} in the characterization of the $\Gamma$-closure.
\end{remark}
\begin{remark} 
	Based on expression \eqref{2:eq.minseq}, one can easily conclude what would be the appropriate periodic or
	ergodic cell formula (cf. \cite[Section 4]{Vel14a}, see also \cite{buk16})), by using respectively  two-scale and stochastic two-scale
	convergence.
\end{remark}
\subsection{Perforated domains and domains with defects} 
An important application issue is relaxation of the pointwise a.e.~coercivity condition 
from \eqref{2.eq:A1} and motivation for that is to include, for example, perforated domains or 
domains with defects into our framework.
The situation we have in mind is when the coercivity condition is satisfied a.e.~only on a subset
$S^{h_n} \subset \Omega$ and part of the domain $\Omega \backslash S^{h_n}$ has defects. 
To be more precise we make the following technical assumption.
\begin{assumption}\label{ass: perforated}
	For every $n \in \N$, there exists a subdomain $S^{h_n} \subset \Omega$ and an extension 
	operator $E^{h_n}: H^1(S^{h_n},\R^3) \to H^1(\Omega,\R^3)$, which meet the following conditions 
	(for $\vect \psi \in H^1(S^{h_n},\R^3)$ , resp.~$\vect \psi \in H^1(\Omega,\R^3)$, 
	we denote by $\vect\psi^{E^{h_n}}=E^{h_n} \vect \psi$, 
	resp.~$\vect\psi^{E^{h_n}}=E^{h_n} \left(\vect \psi|_{S^{h_n}}\right)$):
	\begin{enumerate}[(a)]
		\item there exists  $\alpha>0$, such that for every $n \in \N$, 
		$\mat M \in \set S(\omega)$ and $\vect \psi \in H^1(\Omega, \R^3)$
		$$\alpha\|\mat M +\sym \nabla_{h_n}\vect\psi\|^2_{L^2(S^{h_n})} \leq  \int_{\Omega} 
		Q^{h_n}(x,\imath(\mat M) + \nabla_{h_n}\vect\psi)\dd x \,;   $$
		\item $$ \vect \psi^{E^{h_n}}|_{S^{h_n}}=\vect \psi|_{S^{h_n}}\,, \quad 
		\forall \vect \psi \in H^1(S^{h_n}, \R^3) \, ;$$
		\item
		$$ \| \sym \nabla_{h_n} \vect \psi^{E^{h_n}} \|_{L^2(\Omega)} 
		\leq C(\Omega) \| \sym \nabla_{h_n}\vect \psi\|_{L^2(S^{h_n})}\,,\quad 
		\forall \vect\psi \in H^1(S^{h_n},\R^3)\,; $$  
		\item for every sequence $(\vect\psi^{h_n})_n \subset H^1(\Omega,\R^3)$ such that 
		$\left( \|\sym\nabla_{h_n}\vect \psi^{h_n} \|_{L^2(S^{h_n})} \right)_n$  is bounded,
		\begin{equation*}
			 \|(\psi^{h_n}_1,\psi^{h_n}_2, h_n \psi^{h_n}_3)\|_{L^2} \to 0\quad  \implies  \quad
			 \left\|\left(\psi^{E^{h_n}}_1,\psi^{E^{h_n}}_2, h_n \psi^{E^{h_n}}_3\right)\right\|_{L^2} \to 0\,;         
		\end{equation*}
		\item for every $\mat M \in \set S(\omega)$ and $(\vect\psi^{h_n})_n \subset H^1(\Omega,\R^3)$,
		such that $\|(\psi^{h_n},\psi^{h_n}, h_n \psi^{h_n})\|_{L^2} \to 0 $ it holds
	$$\limsup_{n \to \infty} \left(\int_{\Omega} 
Q^{h_n}(x,\imath(\mat M) + \nabla_{h_n}\vect\psi^{h_n})\dd x  -\int_{\Omega} 
Q^{h_n}\left(x,\imath(\mat M) + \nabla_{h_n}\vect \psi^{E^{h_n}}\right)\dd x \right) \leq 0\,. $$
	\end{enumerate} 
\begin{remark} \label{remanalognoperf}
As a consequence of (a), (b) and (d), for every $\mat M_1, \mat M_2 \in \set S(\omega)$ and 
every $A \subset \omega$ open we have
\begin{eqnarray*} 
\left|\Khpm(\mat M_1,A) - \Khpm(\mat M_2,A)\right| \leq C(\Omega)\|\mat M_1 - \mat M_2\|_{L^2}
\left(\|\mat M_1\|_{L^2} + \|\mat M_2\|_{L^2}\right)\, .
\end{eqnarray*}
To see this, notice that in Definition \ref{defodK} of $\Khpm$, we can replace the sequence of 
test functions  $(\vect \psi^{h_n})_n \subset H^1(\Omega, \R^3)$ by the sequence 
$(\vect \psi^{E^{h_n}})_n$. 
Then, equality \eqref{bukal111111} is valid 
with the following definition of $\set K_{h_n}(\mat M, A,\set U)$:
\begin{equation*}
\set K_{h_n}(\mat M, A,\set U) = \inf_{\substack{\vect{\psi}\in H^1(\Omega,\R^3)\\ 
		\left(\psi^{E^{h_n}}_1,\psi^{E^{h_n}}_2, h_n \psi^{E^{h_n}}_3\right)\in\set{U}}}\int_{\Omega}
Q^{h_n}(x,\imath(\mat M\mathbbm{1}_A) + \nabla_{h_n}\vect\psi)\dd x\,.
\end{equation*}
To conclude the above inequality, one simply has to follow the proof of Lemma \ref{lem:ocjena} form Appendix.
\end{remark} 

\end{assumption}

\begin{example}\label{primjer1}
We assume that inside of the domain $\Omega_{h_n}$ there is a sequence of disjoint circular defects  
$\{B_{h_n}^i:=B(x_{h_n}^i,r_{h_n}^i)\}_{i=1}^{\infty}$ and  that there exists a constant $c>1$ such 
that for every $i \in \N$ we have  $K_{h_n}^i:=$  $ ^cB_{h_n}^i \backslash \overline{B_{h_n}^i} 
\subseteq \Omega_{h_n}$, where $  ^cB_{h_n}^i=B(x_{h_n}^i,cr_{h_n}^i)$. Furthermore, 
we assume that every $x \in \Omega_{h_n}$ is contained in at most $N$ balls of the sequence 
$\{ ^c B_{h_n}^i\}_{i=1}^{\infty}$. Denote by $R_{h_n}: \R^3 \to \R^3$ the 
mapping $R_{h_n}(x_1,x_2,x_3)=(x_1,x_2,\frac{x_3}{h_n})$ and by $R^{h_n}$ its inverse. 
For $i \in \N$, define the sets $K^{i,h_n}=R_{h_n} K_{h_n}^i$ and in the similar way define 
$B^{i,h_n},\, ^cB^{i,h_n} $. The regular set in $\Omega=R_{h_n} \Omega_{h_n}$ is defined by 
$S^{h_n} =\Omega \backslash \overline{ \cup_{i=1}^{\infty} B^i_{h_n}}$, where we assume 
the uniform boundedness and coercivity assumption from \eqref{2.eq:A1} for a.e.~$x \in S^{h_n}$. 
On $\Omega \backslash S^{h_n}$ we assume only the uniform 
boundedness condition with $\beta=\kappa_n$ for a.e.~$x \in \Omega \backslash S^{h_n}$ and 
$\kappa_n \to 0$. 

In the sequel, we explain construction of the extension operator. 
Denote by $K_1=B(0,c) \backslash \overline{B(0,1)} $.
Using \cite[Lemma 4.1]{OlShYo92}, there exists a linear operator 
$P:H^1\left(K_1,\R^3\right)$ $\to H^1\left(B(0,c),\R^3\right)$ and $C>0$  such that
\begin{eqnarray} \label{prva}
 \|P\vect w\|_{H^1} &\leq& C\left(\|\vect w \|_{L^2(K_1)} +\|\sym \nabla \vect w\|_{L^2(K_1)}  \right)\,;\\  
 \label{druga}  \|\nabla (P\vect w)\|_{L^2}  &\leq& C \| \nabla (P\vect w)\|_{L^2(K_1)}\,;\\ 
 \label{treca}  \|\sym \nabla (P\vect w)\|_{L^2}  &\leq& C \|\sym \nabla (P\vect w)\|_{L^2(K_1)}\,.  
\end{eqnarray}

Using the operator $P$, define by rescaling and translation operators 
$P^i_{h_n}: H^1(K^i_{h_n},\R^3) \to H^1(B^i_{h_n},\R^3)$, $i\in\N$. 
Define operators $P^{i,h_n}:H^1(K^{i,h_n},\R^3) \to H^1(B^{i,h_n},\R^3) $ 
by rescaling naturaly the operators $P_{h_n}^i$: for $\vect \psi \in H^1(K^{i,h_n},\R^3)$, set 
$P^{i,h_n} \vect \psi= P_{h_n}^i(\vect \psi \circ R_{h_n})\circ R^{h_n} $. 
Finally, define the extension operator $E^{h_n} :H^1(S^{h_n},\R^3) \to H^1(\Omega,\R^3)$, 
such that for every $i \in \N$,
$$  E^{h_n} \vect \psi|_{B^{i,h_n}} 
= P^{i,h_n}\vect\psi\,, \quad \forall \vect \psi \in H^1 (S^{h_n},\R^3)\,.$$
 We want to prove that the operator $E^{h_n}$ meets conditions (b)-(e) from Assumption 
 \ref{ass: perforated}. Condition (b) is direct, and (c) is a consequence of \eqref{treca} and 
 the fact that every $x \in \Omega_{h_n}$ is contained in at most $N$ balls of the sequence 
 $\{ ^c B_{h_n}^i\}_{i=1}^{\infty}$. Condition (e) is a consequence of (c) and the fact that 
 $\kappa_n \to 0$. To prove (d), we proceed as follows. 
 Integrating the Korn's inequality with the boundary condition \eqref{Kornwithbc} from $1$ to $c$, 
 we obtain that there exists a constant $C>0$, such that
 \begin{equation*}
 \|\vect w\|^2_{L^2 (B(0,1))} 
 \leq C \left(\|\vect w\|^2_{L^2(K_1))} + \|\sym \nabla \vect w\|^2_{L^2 (B(0,c))}  \right)\,,
  \quad \forall \vect w \in H^1(B(0,c),\R^3)\,. 
 \end{equation*} 
After rescaling on $ ^cB^{i}_{h_n}$, for every $i \in \N$ and $h_n$ we obtain 
\begin{equation*}
\|\vect \psi  \|^2_{L^2( B^{i}_{h_n})} 
\leq C \left( \|\vect \psi  \|^2_{L^2( K^{i}_{h_n})}+(r^i_{h_n})^2 
 \|\sym \nabla \vect \psi\|^2_{L^2( ^ c B^{i}_{h_n})}\right)\,, \quad 
 \forall  \vect \psi \in H^1(^ c B^{i}_{h_n},\R^3)\,. 
\end{equation*}
Rescaling once again by $R_{h_n}$ on $\Omega$, for every $i \in \N$, and $h_n$ we obtain 
\begin{equation} \label{zadnjeprimjer}
\|\vect \psi  \|^2_{L^2( B^{i,h_n})} \leq 
C \left( \|\vect \psi  \|^2_{L^2( K^{i,h_n})}+(r^i_{h_n})^2 
 \|\sym \nabla_{h_n} \vect \psi\|^2_{L^2( ^ c B^{i,h_n})}\right)\,, 
 \quad \forall  \vect \psi \in H^1(^ c B^{i,h_n},\R^3)\,.
\end{equation}
Notice that $\forall i \in \N$ it holds $r^i_{h_n} <h_n$. From \eqref{zadnjeprimjer}, property (c) 
and the fact that $(\psi_1^{h_n},\psi_2^{h_n}) \to 0$ strongly in the $L^2$-norm, 
we easily conclude that $(\psi^{E^{h_n}}_1, \psi^{E^{h_n}}_2) \to 0$ strongly in the $L^2$-norm. 
It remains to show that $h_n\psi^{E^{h_n}}_3 \to 0$  strongly in the $L^2$-norm. 
Define $\hat{\psi}_3^{E^{h_n}}$ and $\bar{\psi}_3^{E^{h_n}}$ by
$$  \hat{\psi}_3^{E^{h_n}}=\int_I \psi_3^{E^{h_n}} \dd x_3\,, 
\quad  \bar{\psi}_3^{E^{h_n}}= \psi_3^{E^{h_n}}-\hat{\psi}_3^{E^{h_n}}\,.$$
First, we use the property (b) and the Griso's decomposition of $\vect \psi^{E^{h_n}}$ 
(see \eqref{griso1}) to conclude from \eqref{prvaKorn} (see also \eqref{3.eq:compact_est}) 
that $\partial_{\alpha} (h_n \hat{\psi}_3^{E^{h_n}}) \to 0$ strongly in the $L^2$-norm 
for $\alpha=1,2$. 
Since from \eqref{prvaKorn} we easily obtain that $\partial_{i} (h_n \bar{\psi}_3^{E^{h_n}}) \to 0$ 
strongly in the $L^2$-norm for $i=1,2,3$, we have that $\partial_{i} (h_n \psi_3^{E^{h_n}}) \to 0$ 
strongly in the $L^2$-norm. The final claim follows from the fact that there exists a constant $C>0$, 
such that $\forall i \in \N$ and $\forall h_n$
 \begin{equation}\label{bukalbravo} 
\|\eta \|^2_{L^2( B^{i}_{h_n})} \leq C \left( \|\eta  \|^2_{L^2( K^{i}_{h_n})}+(r^i_{h_n})^2  
\| \nabla \eta\|^2_{L^2( ^ c B^{i}_{h_n})} \right)\,, \quad \forall   \eta \in H^1(^ c B^{i}_{h_n})\,,
\end{equation}                        
which can be proved analogously as \eqref{zadnjeprimjer}, and the fact that $h_n \psi^{h_n}_3 \to 0$ 
strongly in the $L^2$-norm. Observe that, although we assumed circular defects above, 
the only important thing is to control the geometry of parts with defects, i.e.~to have a 
control over constants in the expressions 
\eqref{treca},\eqref{zadnjeprimjer}, \eqref{bukalbravo}. Thus, other geometries of defects
are also possible (see Figure \ref{fig:perfdomain}).
\end{example}
\begin{figure}[!h] 
\centering
	\subfloat[]{
		\includegraphics[height = 20mm]{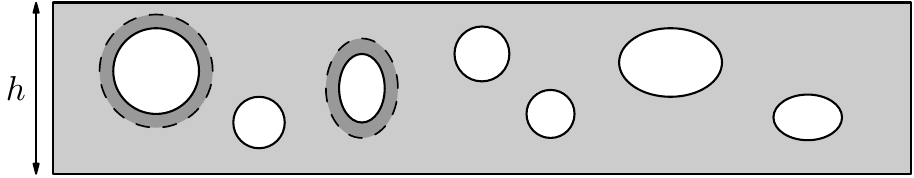}   
		\label{fig:perfdomain}} \hspace{7mm} 
	\subfloat[]{
		\includegraphics[height = 20mm]{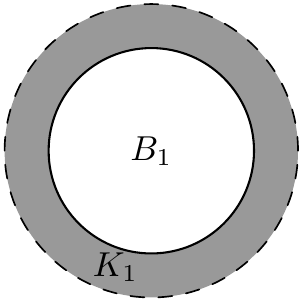} 
		\label{fig:krug}} \hspace{7mm} 
	\subfloat[]{
		\includegraphics[height = 20mm]{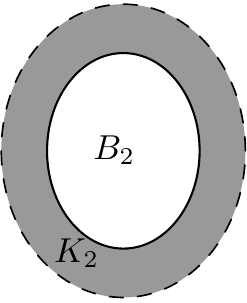}
		\label{fig:elipsa}}
	\caption{(a) Thin domain with defects.    
		(b) Extension operator of type I. (c) Extension operator of type II.}
	\label{fig:perfdomain}
\end{figure}

\begin{example}
With an analogous analysis as in the previous example, one can analyze thin domain with oscillatory 
boundary (see Figure \ref{fig:zubdomain}). Here we assume that the size of oscillations
$\varepsilon \leq h$. 
The extension operator can be constructed in a similar way as before, 
using the extension operator $P : H^1(Q_1,\R^3) \to H^1(Q,\R^3)$, where $Q=Q_1 \cup Q_2$.
\end{example}
\begin{figure}[!h]
\centering
	\subfloat[]{
		\includegraphics[height = 25mm]{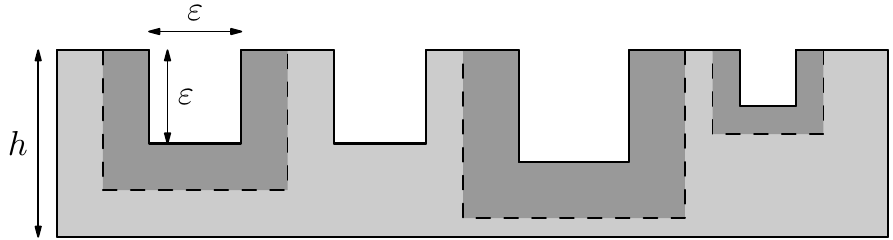} 
		\label{fig:zubdomain}} \hspace{15mm}
	\subfloat[]{
		\includegraphics[height = 20mm]{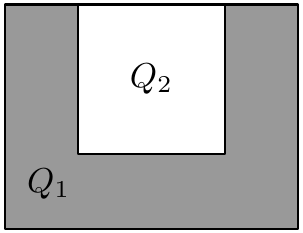} 
		\label{fig:udomain}}
	\caption{(a) Thin domain with oscillatory boundary.    
		(b) Extension operator.}  
	\label{fig:composite}
\end{figure}

We now state a lemma for domains with defects, which is analogous to 
Lemma \ref{lemma:equi-int1}. 
\begin{lemma}\label{lem:perforated}
Under the uniform boundedness assumption from \eqref{2.eq:A1} and the Assumption \ref{ass: perforated}, for every sequence $(h_n)_n$, $h_n\downarrow0$, there exists a subsequence, still denoted by $(h_n)_n$, 
which satisfies
\begin{equation*}
	\Kh(\mat M,A) := \Khm(\mat M,A) = \Khp(\mat M,A)\,, \quad \forall \mat M\in \set S(\omega)\,,
	\ \forall A\subset\omega\,\text{open}\,.
\end{equation*}
For every 
$\vect{M}\in \set S(\omega)$, there exists a sequence of correctors 
$(\vect{\psi}^{h_n})_n\subset H^1(\Omega,\R^3)$, $(\psi^{h_n}_1,\psi^{h_n}_2,h_{n}\psi_3^{h_n})\to 0$ 
strongly in the $L^2$-norm, and  for every open subset $A\subset\omega$, 
which satisfies $\mat M\mathbbm{1}_{\partial A \times I}=0$ for a.e.~$x \in \Omega$, we have
\begin{equation}\label{2:eq.minseq1}
\Kh(\mat M,A) = \lim_{n\to\infty}\int_{A\times I} 
Q^{h_n}(x,\imath(\mat M)  + \nabla_{h_n}\vect\psi^{h_n})\dd x\,.
\end{equation}
In the case when $\mat M\mathbbm{1}_{\partial A \times I} \neq 0$, we can only conclude that the 
left-hand side is less or equal to the right-hand side in \eqref{2:eq.minseq1}.
The following properties hold:
\begin{enumerate}[(a)]
	\item 		the following decomposition holds: 
	\begin{equation*} 
	\vect{\psi}^{h_n}(x) = \left(\begin{array}{c} 0 \\ 0 \\ \dfrac{\varphi^{h_n} (x')}{h_n}\end{array}
	\right) 
	-x_3\left(\begin{array}{c} \nabla'\varphi^{h_n}(x') \\ 0 \end{array} \right) + 
	\tilde{\vect{\psi}}^{h_n}\,,
	\end{equation*}
	i.e.
	\begin{equation*}\label{bukall3}
	\sym \nabla_{h_{n}}\vect{\psi}^{h_n} = -x_3\imath(\nabla'^2\varphi^{h_n})
	+ \sym\nabla_{h_{n}}\tilde{\vect{\psi}}^{h_n}\,,
	\end{equation*}
	where 
	\begin{align*}
		{\varphi}^{h_n} \to 0\ &\text{ strongly in } H^1(\omega)\,,\quad
		\tilde{\vect{\psi}}^{h_n} \to 0\ \text{ strongly in } L^2(\Omega,\R^3)\,,\quad and\\
		&\limsup_{n\to\infty}\left(\|{\varphi}^{h_n}\|_{H^2} + \|\nabla_{h_{n}}\tilde{\vect{\psi}}^{h_n}\|_{L^2}
		\right) \leq C(\Omega)(\|\mat M\|_{L^2}^2 + 1)\,, 
	\end{align*}
	for some $C(\Omega)>0$;
	\item for every subseqeunce, still denoted by $(h_n)_n$, one can change the sequence of correctors, if necessary, on a further subsequence 
	(possibly depending on $\mat M$), such that 
	the sequences $(|\nabla' \varphi^{h_n}|^2)_n $ and $(|\nabla_{h_{n}}\tilde{\vect{\psi}}^{h_n}|^2)_n$
	are equi-integrable, i.e.~the sequence $(|\sym \nabla_{h_{n}}\vect{\psi}^{h_n}|^2)_n$ is 
	equi-integrable. On that subsequence for the changed correctors, formula \eqref{2:eq.minseq1} is 
	valid for every open subset $A \subset \omega$. 
	\item for every $\mat M \in \set S(\omega) $ and $A \subset \omega$ open, we have  
	\begin{equation*} \label{minimabukal} 
		\Khtm (\mat M, A)\leq \Khtp (\mat M, A)\leq \Kh(\mat M, A)\,.
	\end{equation*}
\end{enumerate}
\end{lemma}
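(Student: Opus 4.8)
The plan is to mimic the proof of Lemma~\ref{lemma:equi-int1}, with two structural changes. The pointwise coercivity of \eqref{2.eq:A1}, which there both supplies a priori bounds for the correctors and forces equi-integrability of the minimizing sequence, is now available only on the regular sets $S^{h_n}$ (Assumption~\ref{ass: perforated}(a)), so it must be compensated by the extension operator $E^{h_n}$; and since equi-integrability is no longer automatic, property (b) can only be obtained after passing to a further, $\mat M$-dependent, subsequence. The continuity estimate \eqref{bukal1} is replaced throughout by its counterpart under Assumption~\ref{ass: perforated} recorded in Remark~\ref{remanalognoperf}.

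First I would fix a countable $L^2$-dense family $\{\mat M_j\}_{j\in\N}\subset\set S(\omega)$ and, enlarging $\omega$ to a smooth domain if needed as in the proof of Lemma~\ref{lemma:equi-int1}, extract by diagonalization a subsequence along which $\Khm(\mat M_j,\omega)=\Khp(\mat M_j,\omega)$ for all $j$ and along which the common value is attained by some $(\vect\psi^{h_n}_{\mat M_j})_n\subset H^1(\Omega,\R^3)$ with $(\psi^{h_n}_{\mat M_j,1},\psi^{h_n}_{\mat M_j,2},h_n\psi^{h_n}_{\mat M_j,3})\to0$ in $L^2$. Testing with zero and using uniform boundedness bounds the energy by $\beta\|\mat M_j\|_{L^2}^2+o(1)$, so Assumption~\ref{ass: perforated}(a) bounds $\|\sym\nabla_{h_n}\vect\psi^{h_n}_{\mat M_j}\|_{L^2(S^{h_n})}$; I would then replace $\vect\psi^{h_n}_{\mat M_j}$ by its extension $E^{h_n}(\vect\psi^{h_n}_{\mat M_j}|_{S^{h_n}})$, which by Assumption~\ref{ass: perforated}(c),(d),(e) is bounded in $\sym\nabla_{h_n}$ on all of $\Omega$, still obeys the $L^2$ constraint, and does not raise the energy in the $\limsup$, hence remains minimizing. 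Griso's decomposition (Lemma~\ref{igor2}) now applies on $\Omega$ and produces property (a) together with its a priori bounds. By Remark~\ref{remanalognoperf} the equality of upper and lower functionals propagates to every $\mat M\in\set S(\omega)$, and a minimizing sequence with decomposition (a) is obtained for each such $\mat M$ by diagonalizing the correctors of the $\{\mat M_j\}$, exactly as in Remark~\ref{rempromjena}.

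For \eqref{2:eq.minseq1} on an open $A$ with $\mat M\mathbbm{1}_{\partial A\times I}=0$ I would run the localization argument of Lemma~\ref{lemma:equi-int1} with the obvious changes: pass to a subsequence so that the limit over $A\times I$ exists, use Lemmas~\ref{ekvi1} and~\ref{ekvi2} to substitute equi-integrable versions of the second gradient of $\varphi^{h_n}$ and the scaled gradient of $\tilde{\vect\psi}^{h_n}$ — the energy changing only through \eqref{razlika} integrated over sets of vanishing measure, hence negligibly in the limit — then truncate the corrector near $\partial A\times I$ and near $\partial(\omega\setminus\bar A)\times I$ via Lemma~\ref{nulizacija}, glue the pieces into a competitor for $\Kh(\mat M,\omega)$, and invoke minimality to obtain equality; this simultaneously yields the analogue of \eqref{stefan11}. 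When $\mat M\mathbbm{1}_{\partial A\times I}\neq0$ the glued competitor drops the contribution on $\partial A\times I$, so only $\Kh(\mat M,A)\le$ (right-hand side) survives — the stated one-sided claim. Property (b) follows because, once the correctors have been replaced by the equi-integrable ones (legitimate only on a further subsequence, by Lemmas~\ref{ekvi1},\ref{ekvi2}, and depending on $\mat M$ through the sequence), the contribution of $\partial A\times I$ has vanishing measure and \eqref{2:eq.minseq1} holds for all open $A$. For property (c), $\Khtm(\mat M,A)\le\Khtp(\mat M,A)$ is trivial; for $\Khtp(\mat M,A)\le\Kh(\mat M,A)$ I would take the equi-integrable correctors for $\mat M_A:=\mat M\mathbbm{1}_{A\times I}$ from (b), observe that $\mat M_A=0$ on $(\omega\setminus\bar A)\times I$ and (arguing as for \eqref{stefan11}) that the energy there tends to $0$, so by \eqref{2:eq.minseq1} with $A=\omega$ and \eqref{trivijala} the energy over $A\times I$ tends to $\Kh(\mat M,A)$; restricting those correctors to $A\times I$ gives an admissible competitor for $\Khtp(\mat M,A)$.

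The main obstacle is the first step: the extension operator must be inserted in the precise order that keeps boundedness of $\sym\nabla_{h_n}\vect\psi^{E^{h_n}}$ on $\Omega$, the vanishing of $(\psi^{E^{h_n}}_1,\psi^{E^{h_n}}_2,h_n\psi^{E^{h_n}}_3)$ in $L^2$, and non-increase of the energy in the $\limsup$ simultaneously in force, so that Griso's decomposition genuinely applies and minimality is not destroyed. I would not expect the reverse inequality $\Khtm(\mat M,A)\ge\Kh(\mat M,A)$ to hold here: its proof in Lemma~\ref{lemma:equi-int1} used coercivity to bound $\sym\nabla_{h_n}$ of a test sequence for $\Khtm(\mat M,A)$ — which now lives only on $A\times I$ — from its energy, and the extension operator is not available on the truncated regular set $(A\times I)\cap S^{h_n}$; this is precisely why (c) is stated as a chain of inequalities rather than equalities.
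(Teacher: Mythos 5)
Your proposal is correct and follows essentially the same route as the paper: the paper's proof amounts to the sentence ``follow the proof of Lemma~\ref{lemma:equi-int1} after replacing the test functions $\vect\psi_{\mat M}^{h_n}$ by their extensions $\vect\psi_{\mat M}^{E^{h_n}}$ and using Remark~\ref{remanalognoperf}; (b) and (c) hold only in the stated weaker forms because pointwise coercivity is unavailable.'' You have unpacked exactly that plan — diagonalizing over a dense family, inserting the extension operator via Assumption~\ref{ass: perforated}(a),(c),(d),(e) before invoking Griso's decomposition, localizing via the equi-integrable replacement and truncation of Lemmas~\ref{ekvi1}, \ref{ekvi2}, \ref{nulizacija}, and correctly explaining why the equality $\Khtm = \Kh$ of Lemma~\ref{lemma:equi-int1}(c) degrades to a chain of inequalities (no coercivity-derived bound on $\sym\nabla_{h_n}$ of a test sequence for $\Khtm$, and no extension operator on the truncated regular set) and why (b) requires a further $\mat M$-dependent subsequence.
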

\begin{proof}
The proof follows the lines of the proof of Lemma \ref{lemma:equi-int1}, after replacing the 
sequence of test functions $(\vect \psi_{\mat M}^{h_n})_n \subset H^{1}(\Omega, \R^n)$ by 
$(\vect \psi_{\mat M}^{E^{h_n}})_n \subset H^{1}(\Omega, \R^n)$ and using Remark \ref{remanalognoperf}. 
The validity of statements (b), (c) and (d) from Lemma \ref{lemma:equi-int1} cannot be concluded, 
since there the pointwise a.e.~coercivity was used. However, weaker statements for the 
equi-integrability and relation between $\Khtm,\Khtp,\Kh$ can be concluded without the 
pointwise coercivity, in the same way as in the proof of Lemma \ref{lemma:equi-int1}. 
\end{proof}	
\begin{remark}\label{explanation1}
The main obstruction in proving equality in (c) is that from the definition of $\Khtpm$, one cannot 
conclude that the minimizing sequence has bounded symmetric scaled gradients and thus, one 
cannot perform the analysis analogous to the one given in \cite{Vel14a}. 
On the other hand, the definition of $\Khpm$ takes the whole domain $\Omega$ into account, 
where an extension operator exists, and thus, is more suitable for domains with defects. 
\end{remark}
\begin{remark}\label{remslikamario}
Validaty of Remarks \ref{rempromjena} and \ref{bukal100000} can be easily checked in the same way as before, 
and the claim of Proposition \ref{prop:Int_form} is satisfied, but without the coercivity property
in \eqref{bbbbbbbb}. 
\end{remark}
\section{Convergence of functionals and proof of Theorem \ref{thm:1}}\label{sec:3}
\subsection{Convergence of energy functionals}
Here we precisely explain what we mean by $\energy_{h_n}(\vect{u}^{h_n}) \to \energy_0(\vect w, v)$ as $n\to\infty$.
First, let us introduce the space of limiting displacements
\begin{equation*}
\set U(\omega) = \{(\vect w, v)\, :\, \vect w \in H^1_{\Gamma_d}(\omega,\R^2)\,, v\in H^2_{\Gamma_d}(\omega)\}\,.
\end{equation*}
\begin{definition}[Convergence of displacements]\label{2.def:conv_displ}
We say that a sequence of displacement fields $(\vect{u}^{h_n})_n\subset H^1_{\Gamma_d}(\Omega,\R^3)$ converges to 
a pair of in-plane and out of-plane displacements $(\vect w, v)\in\set U(\omega)$, 
in notation $\vect{u}^{h_n} \to (\vect w, v)$, 
if there exists decomposition of $\vect{u}^{h_n}$ of the form
\begin{equation}\label{2.eq:displ_form}
\vect{u}^{h_n}(x) = \left(\begin{array}{c}\vect{w}(x') \\ \dfrac{v(x')}{h_n}\end{array}\right) 
						- x_3\left(\begin{array}{c} \nabla'v(x') \\ 0 \end{array} \right)
						+ {\vect{\psi}}^{h_n}(x)\,,
\end{equation}
where the {\em corrector sequence} $(\vect\psi^{h_n})_n\subset H^1_{\Gamma_d}(\Omega,\R^3)$ satisfies 
$\lim_{n\to\infty}(\psi_1^{h_n}, \psi_2^{h_n},h_n\psi_3^{h_n}) = 0$ in the $L^2$-norm. 
\end{definition}

\begin{remark}\label{bukal100}
Otherwise stated, the above formulation means that $(u_1^{h_n},u_2^{h_n})^T \to \vect w - x_3\nabla'v$  
and $h_nu_3^{h_n}\to v$ in the $L^2$-norm as $n\to\infty$. This non-intuitive definition of convergence will be 
clarified in Section \ref{sec:3} below. 
It is easy to check that the convergence is properly defined, i.e.~the limit (if exists) is unique. 
Usually, in the linearized elasticity one does the scaling of the unknowns (see, e.g.~\cite{Cia97}) 
and then easily concludes that the
limit model is the Kirchoff-Love model (the first two terms in the expression \eqref{2.eq:displ_form}).
However, here we prefer not to scale the unknowns apriori. 
One of the basic reason is that we would then unnecessary introduce $h$ in the left hand side 
of \eqref{bukall3}. Moreover, Griso's decomposition gives us all the information we need: the limit 
field, which satisfies the Kirchoff-Love ansatz, and the corrector field. 
\end{remark}

\begin{definition}[Convergence of energies]
We say that a sequence of elastic energies $(\energy_{h_n})_n$  converges to the limit
energy $\energy_0$ if for all $(\vect w,v)\in\set U(\omega)$, the following two statements hold:
\begin{enumerate}[(i)]
  \item ($\liminf$ inequality) for every sequence $(\vect{u}^{h_n})_n \subset H^1_{\Gamma_d}(\Omega,\R^3)$ converging to $(\vect w, v)$,
  in the sense of Definition \ref{2.def:conv_displ},
  \begin{equation*}
  	\liminf_{n\to\infty}\energy_{h_n}(\vect u^{h_n}) \geq \energy_0(\vect w,v)\,;
  \end{equation*}
  \item ($\limsup$ inequality) there exists a sequence $(\vect{u}^{h_n})_n \subset H^1_{\Gamma_d}(\Omega,\R^3)$ converging to $(\vect w, v)$
  such that 
  \begin{equation*}
  	\limsup_{n\to\infty}\energy_{h_n}(\vect u^{h_n}) \leq \energy_0(\vect w,v)\,.
  \end{equation*}
\end{enumerate}
\end{definition}
\begin{remark}
Like in the standard $\Gamma$-convergence, condition (ii) from the previous definition can be rephrased in terms of
the existence of a {\em recovery sequence}, i.e.~there exists a sequence $(\vect{u}^{h_n})_n \subset H^1_{\Gamma_d}(\Omega,\R^3)$ converging to $(\vect w, v)$
such that 
  \begin{equation*}
  	\energy_0(\vect w,v) = \lim_{n\to\infty}\energy_{h_n}(\vect u^{h_n})\,.
  \end{equation*}
\end{remark}

\subsection{Compactness}\label{sec3:comp} 
Let $(h_n)_n$ be a sequence of plate thickness and 
let $(\vect{u}^{h_n})_n\subset H^1_{\Gamma_d}(\Omega,\R^3)$ be a sequence of displacement fields, which
equal zero on $\Gamma_d\times I\subset\pa\omega\times I$ of strictly positive surface measure.
Assume that $(\vect{u}^{h_n})_n$ is the sequence of equi-bounded energies, 
i.e.~$\limsup_{n\to\infty}\energy_{h_n}(\vect{u}^{h_n}) \leq C < \infty$.
Then the coercivity condition \eqref{2.eq:A1}  implies the equi-boundednes
of the sequence of the symmetrized scaled gradients
\begin{equation*}
\limsup_{n\to\infty}\|\sym\nabla_{h_n}\vect{u}^{h_n}\|_{L^2} < \infty\,.
\end{equation*}
Applying Lemma \ref{app:lem.limsup} from Appendix we obtain the compactness result.

\subsection{Lower bound}\label{sec3:lb}
Let $(\vect{u}^{h_n})_n\subset H^1_{\Gamma_d}(\omega\times I,\R^3)$ be the sequence of displacements of 
equi-bounded energies and $(h_n)_n$ satisfies Assumption \ref{2:ass} or Lemma \ref{lemma:equi-int1}. 
 We  decompose $\vect{u}^{h_n}$ (on a subsequence)
\begin{align}
\vect{u}^{h_n}(x) = \left(\begin{array}{c}\vect{w}(x') \\ \dfrac{v(x')}{h_n}\end{array}\right) 
						- x_3\left(\begin{array}{c} \nabla'v(x') \\ 0 \end{array} \right)\nonumber
				+ \vect{\psi}^{h_n}(x). 
\end{align}
where $\vect{w}\in H_{\Gamma_d}^1(\omega,\R^2)$ and 
$v\in H_{\Gamma_d}^2(\omega)$ are fixed horizontal and vertical displacemets and $(\psi^{h_n}_1,\psi^{h_n}_2, h_n \psi^{h_n}_3) \to 0$ in the $L^2$-norm. 
The proof of lower bound immediately follows from the definition of the functional $\mathcal{K}_{(h_n)}$. 
\subsection{Construction of the recovery sequence}\label{sec3:ub}
Let $(\vect{w}, v)\in \set{U}(\omega)$ and $(h_n)_n$ satisfies Assumption \ref{2:ass} or Lemma \ref{lem:perforated}. Utilizing Lemma \ref{lemma:equi-int1} (and Remark \ref{rempromjena}, i.e., Rematk \ref{remslikamario}), 
there exist a sequence  
$\vect{\psi}^{h_n}\subset H^1(\Omega,\R^3)$, such that $\vect{\psi}^{h_n}=0$ on $\partial \omega \times I$,  $(\psi_1^{h_n},\psi_2^{h_n},h_n\psi_3^{h_n})\to 0$ in the $L^2$-norm and 
\begin{equation*}
\int_\omega Q^0(x',\sym\nabla\vect{w},-\nabla'^2v)\dd x' = 
\lim_{n\to\infty}\int_{\omega\times I}Q^{h_{n}}(x, \imath(\sym\nabla\vect{w} - x_3\nabla'^2v) 
+ \sym\nabla_{h_{n}}{\vect{\psi}}^{h_{n}})\dd x\,.
\end{equation*}
Defining the sequence of displacements $\vect u^{h_n}$ by
\begin{equation*}
\vect{u}^{h_n} = \left(\begin{array}{c} \vect w \\ \dfrac{v}{h_n}\end{array}\right)
 - x_3 \left(\begin{array}{c} \nabla'v \\ 0 \end{array}\right) + \vect{\psi}^{h_n}\,,
\end{equation*}
we have that $\vect u^{h_n}\to (\vect w, v)$ as $n\to \infty$ in the sense of 
Definition \ref{2.def:conv_displ}, and the previous identity implies 
\begin{equation*}
\energy_0 (\vect{w},v) = \lim_{n\to\infty}\energy_{h_n}(\vect u^{h_n})\,,
\end{equation*}
which finishes the proof.
\begin{remark} 
	Analysis of the homogeneous isotropic plate (see \cite{Cia97}), after rescaling, reduces to the $\Gamma$-convergence
	problem in the $L^2$-topology.
	The limit energy density (written on $3D$ domain) depends on $\sym \nabla (\vect w-x_3 \nabla v)$, 
	where $\vect w-x_3 \nabla v$ is (as above) the limit of first two components of the $h_n$ problem $(u^{h_n}_1,u^{h_n}_2)$. 
	Thus, the limit energy, although it contains the second gradients, can be interpreted as of the same 
	type as the starting one, but restricted to the appropriate space. This  does not happen here, 
	since in the limit energy we cannot separate membrane energy (dependent on $\sym \nabla \vect w$)
	from the curvature energy (dependent on $\nabla^2 v$). 
\end{remark} 
\subsection{Adding lower order term with forces}\label{subforces}
It is relatively easy to include into the model and analyze external forces of the form $(f_1,f_2,h_nf_3)$, 
which corresponds to the standard scaling for plates in the context of linearized elasticity, see 
\cite{Cia97}. 
Define the energy functional 
\begin{equation} \label{dodanosile}
\fenergy_{h_n}(\vect u^{h_n})= \int_{\Omega}Q^{h_n}(x,\sym \nabla_{h_n} \vect u^{h_n}) \dd x 
- \int_{\Omega}\vect f \cdot (u_1^{h_n}, u_2^{h_n},h_n u_3^{h_n})\dd x\,.
\end{equation}
For the sequence of minimizers, denoted by $(\vect u_m^{h_n})_n $, one easily deduces, using the  
pointwise coercivity from \eqref{2.eq:A1}, Corollary \ref{kornthincor} and testing the functional 
$\fenergy_{h_n}$ with zero function,
\begin{equation}
\|\sym \nabla_{h_n} \vect u_m^{h_n} \|^2_{L^2}\leq C\| (u_1^{h_n}, u_2^{h_n},h_n u_3^{h_n})\|_{L^2} 
\leq C\|\sym \nabla_{h_n} \vect u_m^{h_n} \|_{L^2}\,.
\end{equation}
After obtaining the boundednes of the sequence 
$\left(\|\sym \nabla_{h_n} \vect u_m^{h_n} \|_{L^2}\right)_n$, the analysis goes the same way 
as in the previous section. The above analysis implies in the standard way that the minimizers of 
$h_n$-problem converge in the sense of Definition \ref{2.def:conv_displ} (on a whole sequence) 
to the minimizer of the limit problem, when the pointwise coercivity assumption from \eqref{2.eq:A1} 
is satisfied. The following assumption is needed for the convergence of minimizers in the case when 
the pointwise coercivity assumption from \eqref{2.eq:A1} is not satisfied, but Assumption 
\ref{ass: perforated} is assumed. 
\begin{assumption}\label{ass;prfff}
	We assume the validity of Assumption \ref{2:ass} and that for $S^{h_n} \subset \Omega$ additionally the extension operator 
	$E^{h_n}: H^1(S^{h_n},\R^3) \to H^1(\Omega,\R^3)$ satisfies the following conditions: 
	\begin{enumerate} [(a)]
		\item 	$\vect\psi=0  \textrm{ on } \Gamma_d \times I \, \implies  
				\vect\psi^{E^{h_n}}=0 \textrm{ on }  \Gamma_d \times I;$
		\item $1_{S^{h_n}} \rightharpoonup \theta \in L^{\infty}(\Omega,[0,1]); $
		\item $(f_1,f_2, h_n f_3)1_{\Omega \backslash S^{h_n}}= 0$  or for every sequence 
			$(\vect u^{h_n})_n \subset H^1_{\Gamma_d}(\Omega,\R^3)$, there exists a constant $C>0$, such that
			for every $n\in\N$,
		\begin{align*}
			\int_{\Omega} Q^{h_n}(x, \sym \nabla_{h_n} \vect u^{h_n} ) \dd x & \leq C \left(\|(u^{h_n}_1,u^{h_n}_2, h_n u^{h_n}_3)\|_{L^2}+1\right) \\
			 & \quad	\implies   (u^{h_n}_1,u^{h_n}_2, h_n u^{h_n}_3)-\left(u^{E^{h_n}}_1,u^{E^{h_n}}_2, h_n u^{E^{h_n}}_3\right) \to 0 \textrm{ strongly in } L^2\,.               
		\end{align*}
	\end{enumerate} 
\end{assumption}
\noindent The first assumption in (c) is satisfied in perforated domains, 
while the second one is satisfied in some defect domains, which does not include the standard 
analysis of high contrast domains. Analyzing high contrast domains would require different analysis 
(see \cite{Allaire}, for the periodic case and two-scale convergence approach).
\begin{example}
	We take the situation of Example \ref{primjer1} and the extension operator given there. 
	Additionally, we assume that the energy densities are pointwise coercive in part of the domain 
	$\Omega \backslash S^{h_n}$ with the constants $\varepsilon_n$, where $\varepsilon_n \to 0$. 
	The condition under which (c) from Assumption \ref{ass;prfff} is satisfied is that 
	$\varepsilon_n \gg r_n:=\sup \{r_{h_n}^i\}_{i\in \N}$ 
	(notice that high contrast appears when $\varepsilon_n \sim r_n$). Namely, 
	using \eqref{zadnjeprimjer}, we conclude that
	\begin{equation} \label{jelgotovo} 
	 \| \vect u -\vect u^{E^{h_n}} \|^2_{L^2}  
	 \leq Cr_n^2\|\sym \nabla_{h_n}  (\vect u -\vect u^{E^{h_n}})\|^2_{L^2(\Omega \backslash S^{h_n})}\,, 
	 \quad \forall \vect u \in H^1(\Omega, \R^3)\,.   
	\end{equation} 
Coercivity of the energy, Corollary \ref{kornthincor} from Appendix, property (c) of 
Assumption \ref{ass: perforated} and the above inequality imply 
\begin{align*}
\|\sym \nabla_{h_n} \vect u^{E^{h_n}}\|^2_{L^2}&+\varepsilon_n \|\sym \nabla_{h_n} \vect u\|^2_{L^2(\Omega \backslash S^{h_n})}  \leq C \left(\left\|\left(u^{h_n}_1,u^{h_n}_2, h_n u^{h_n}_3\right)\right\|_{L^2}+1 \right) \\ & \leq C\left(\left\|\left(u^{h_n}_1-u^{E^{h_n}}_1,u^{h_n}_2-u^{E^{h_n}}_2, h_n \left(u^{h_n}_3-u^{E^{h_n}}_3\right)\right)\right\|_{L^2}+\|(u^{E^{h_n}}_1,u^{E^{h_n}}_2, h_n u^{E^{h_n}}_3)\|_{L^2}+1 \right)\\
&  \leq C  \left(r_n\|\sym \nabla_{h_n}  (\vect u -\vect u^{E^{h_n}})\|_{L^2(\Omega \backslash S^{h_n})}+\|\sym \nabla_{h_n} \vect u^{E^{h_n}}\|_{L^2}+1 \right) \\
& \leq C  \left(r_n\|\sym \nabla_{h_n}  \vect u \|_{L^2(\Omega \backslash S^{h_n})}+\|\sym \nabla_{h_n} \vect u^{E^{h_n}}\|_{L^2}+1 \right).
\end{align*}
From the last inequality we conclude the boundedness of the sequences 
$(\|\sym \nabla_{h_n} \vect u^{E^{h_n}}\|_{L^2})_n $ and 
\\$(\varepsilon_n \|\sym \nabla_{h_n} \vect u\|^2_{L^2(\Omega \backslash S^{h_n})} )_n$. 
The first boundedness gives us the compactness result that we need 
(in the analysis of convergence of minimizers), while the second one, together with 
estimate \eqref{jelgotovo}, gives that
$\| \vect u -\vect u^{E^{h_n}} \|_{L^2} \to 0$, which is stronger than the requirement (c) in 
Assumption \ref{ass;prfff}. 
\end{example}

The analysis of convergence of minimizers follows standard arguments of the $\Gamma$-convergence.
To conclude compactness for sequence of minimizers, one has to take the sequence of minimizers 
$(\vect u_m^{h_n})_n$ for $h_n$-problem and look the sequence $(\vect u_m^{E^{h_n}})_n$, 
which has bounded scaled symmetric gradients. 
In the first case of (c), we have 
$\fenergy_{h_n} (\vect u_m^{h_n})=\fenergy_{h_n} (\vect u_m^{E^{h_n}})$ and the analysis proceeds 
in the same way as above. The term added to the energy is of the form
 $$-\int \theta (f_1 w_1+f_2 w_2+f_3 v)\dd x\,. $$ 
In the second case, first we easily conclude by testing with zero function 
that the sequence of minimizers satisfies the second condition in (c). 
In that way we have:
\begin{equation}
\label{igggor}
\left(u^{h_n}_{m,1},u^{h_n}_{m,2}, h_n u^{h_n}_{m,3}\right) 
- \left(u^{E^{h_n}}_{m,1},u^{E^{h_n}}_{m,2}, h_n u^{E^{h_n}}_{m,3}\right)\to 0 \textrm{ strongly in } L^2\,,
\end{equation}
\begin{equation*}
 \fenergy_{h_n} (\vect u^{h_n}_m) = \int_{\Omega} Q^{h_n} \left(x, \sym \nabla_{h_n}\vect u^{h_n}_m\right) \dd x 
 - \int_{\Omega} (f_1,f_2,f_3)\cdot \left(u^{E^{h_n}}_{m,1},u^{E^{h_n}}_{m,2}, h_n u^{E^{h_n}}_{m,3}\right) \dd x + o(1)\,,
\end{equation*}
where $\lim_{n \to \infty} o(1)=0$. Using (a) and (c) from 
Assumption \ref{ass: perforated}, we easily conclude that the sequence 
$\left(\vect u^{E^{h_n}}_m \right)_n$ has bounded symmetrized scaled gradients in the $L^2$-norm. 
To make the lower bound, one utilizes a simple observation: 
 \begin{equation*}
  \int_{\Omega} Q^{h_n} \left(x, \sym \nabla_{h_n}\vect u^{h_n}_m\right) \dd x = 
  \int_{\Omega} Q^{h_n} \left(x, \sym \nabla_{h_n}\vect u^{E^{h_n}}_m+\sym \nabla_{h_n}\left(\vect u^{h_n}_m-\vect u^{E^{h_n}}_m\right)\right) \dd x\,,
\end{equation*}
uses \eqref{igggor}, definition of the limit energy density $Q^0$ and the analysis 
from the previous section.
\begin{remark}  
In the analysis of bending rod or non-linear von K\'arm\'an plate, 
instead of Assumption \ref{ass;prfff}, the existence of another extension operator 
(besides the one from Assumption \ref{ass: perforated}), denoted by $\tilde{E}^{h_n}$, is required. 
Namely, to obtain the compactness result, one needs from the order of forces $h^2$ and $h^3$, 
conclude that the order of the term 
$\|\dist(\nabla_{h_n} \vect y_m^{\tilde{E}^{h_n}}, \SO 3)\|_{L^2(\Omega)}^2$ is $h^2$ and 
$h^4$, respectively (cf.~\cite{FJM06}). Here, $(\vect y_m^{\tilde{E}^{h_n}})_n$ denotes a sequence of 
extended minimizers. 
In the following, we will very briefly discuss the analysis of the bending perforated rod 
(we assume that $S^{h_n} \subseteq \Omega$ is filled with a material and the rest is empty).
A detailed description of the bending rod model is given for instance in \cite{MaVe14a},
and the main tool in obtaining the compactness result is the 
theorem on geometric rigidity \cite{FJM06}.
If the thin domain can be covered with cubes of size $h_n$, which can have only finite number of 
different geometrical shapes with perforations, one can, using the theorem on geometric rigidity, 
conclude that there exist a constant $C>0$ and for each $h_n$ an extension operator 
$\tilde{E}^{h_n} : H^1(S^{h_n},\R^3) \to H^1(\Omega,\R^3)$ satisfying:
\begin{eqnarray}
& & \label{borism0} \| \vect y^{\tilde{E}^{h_n}} \|_{H^1(\Omega)} 
\leq C \|  \vect y^{h_n} \|_{H^1(S^{h_n})}\,,\quad \forall \vect y ^{h_n} \in H^1(S^{h_n},\R^3)\,;\\
& & \label{borism1} \| \nabla_{h_n} \vect y^{\tilde{E}^{h_n}} \|_{L^2(\Omega)} 
\leq C \| \nabla_{h_n} \vect y^{h_n} \|_{L^2(S^{h_n})}\,,\quad 
\forall \vect y ^{h_n} \in H^1(S^{h_n},\R^3)\,;\\
 & & \label{borism2} \left\|\dist\left(\nabla_{h_n} \vect y^{\tilde{E}^{h_n}}, \SO 3\right) 
 \right \|_{L^2(\Omega)} 
 \leq  C \left\|\dist\left(\nabla_{h_n} \vect y^{h_n}, \SO 3\right) \right \|_{L^2(S^{h_n})}\,, 
 \quad \forall \vect y ^{h_n} \in H^1(S^{h_n},\R^3)\,.   
\end{eqnarray}
The extension mapping can be constructed on the unit cube as the one that preserves affine mappings. 
Applying the theorem on geometric rigidity, we then conclude (\ref{borism2}). 
If one takes the forces of the form $h^2 \vect f$, it is not difficult to conclude that  the 
sequence of minimizers satisfies 
\begin{equation*}
\limsup_{n\to\infty}\left\|\dist (\nabla_{h_n} \vect y_m^{\tilde{E}^{h_n}}, \SO 3)\right\|_{L^2(\Omega)}^2
 \dd x \leq Ch^2\,,
\end{equation*} 
for some $C>0$.
The analysis then continues in the standard way, by analyzing the sequence $(\vect y_m^{\tilde{E}^{h_n}})_n$. 
\end{remark}	
\subsection{Auxiliary claims}
Results of this subsection, will be used in establishing the locality property of the $\Gamma$-closure. The arguments follow the standard arguments of $\Gamma$-convergence. As in the next section we will assume the pointwise coerciveness assumption from \eqref{2.eq:A1}.

\begin{lemma}\label{3.14} 
Let $D\subset\R^2$ be open, bounded set having Lipschitz boundary and $(h_n)_n$ satisfying Assumption \ref{2:ass}.
Let $Q^{h_n}$ be a sequence of energy densities with the limit energy density $Q^0$. Then for every 
$\mat M \in \set S(\omega)$
\begin{align*}
\lim_{n\to\infty}\min_{\substack{\vect\psi\in H^1(D\times I,\R^3)\\ 
			\vect\psi=0\text{ on }\pa D\times I}}
			& \int_{D\times I}Q^{h_{n}}(x, \imath(\mat M) + \nabla_{h_{n}}\vect\psi )\dd x \nonumber\\
	& = \min_{\substack{\vect w\in H_0^1(D,\R^2)\,, \\ v\in H^2_0(D)}}
	\int_D Q^0(x',\mat M_1 + \sym \nabla\vect w, \mat M_2 - \nabla^2 v)\dd x'\,.
\end{align*}
Furthermore, for every $r>0$ we have
\begin{align*}
\liminf_{n\to\infty}\min_{\substack{\vect\psi\in H^1(D\times I,\R^3) \\
			\vect\psi=0\text{ on }\pa D\times I\,, 
			\\ \|(\psi_1,\psi_2,h_n\psi_3)\|_{L^2}\leq r}}
			& \int_{D\times I}Q^{h_{n}}(x, \imath(\mat M) + \nabla_{h_{n}}\vect\psi )\dd x \\ \nonumber
	& \geq \min_{\substack{\vect w\in H_0^1(D,\R^2)\,,\, v\in H^2_0(D)\,, \\ 
			\|\vect w\|_{L^2}^2 + \|v\|_{L^2}^2 \leq r^2 }}
	\int_D Q^0(x',\mat M_1 + \sym \nabla\vect w, \mat M_2 - \nabla^2 v)\dd x'\,.
\end{align*}
\end{lemma}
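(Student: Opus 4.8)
The plan is to recognise the left-hand minima as the functionals $\Kht(\cdot,D)$ from Lemma \ref{lemma:equi-int1} with a macroscopic Kirchhoff--Love profile absorbed into the test field, and to move that profile freely between the $\vect\psi$-description and the $(\vect w,v)$-description via Griso's decomposition. Throughout I run the construction of Lemma \ref{lemma:equi-int1} and Proposition \ref{prop:Int_form} over $D$ in place of $\omega$ (nothing in those proofs changes). Write $m_n$, respectively $m_n^{(r)}$, for the left-hand minima over $\vect\psi\in H^1(D\times I,\R^3)$ with $\vect\psi=0$ on $\partial D\times I$ (and, in the second case, $\|(\psi_1,\psi_2,h_n\psi_3)\|_{L^2}\leq r$): by the pointwise coercivity in \eqref{2.eq:A1}, the scaled Korn inequality (Corollary \ref{kornthincor}) and the direct method these minima are attained, and testing with $\vect\psi\equiv0$ gives $m_n\leq m_n^{(r)}\leq\beta\|\imath(\mat M)\|_{L^2(D\times I)}^2<\infty$, so every minimizer has $\|\sym\nabla_{h_n}\vect\psi^{h_n}\|_{L^2}\leq C$. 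On the right, coercivity of $Q^0$ (Proposition \ref{prop:Int_form}) together with Korn on $H^1_0(D,\R^2)$, Poincar\'e on $H^2_0(D)$ and weak lower semicontinuity makes both right-hand minima attained. Two facts are used repeatedly: $Q^{h_n}$ depends only on its symmetrized argument, so the unbounded $\tfrac1{h_n}\partial_3$-entries that Kirchhoff--Love profiles produce are irrelevant; and, by Lemma \ref{lemma:equi-int1}(c) and Proposition \ref{prop:Int_form}, $\Khtm(\mat N,D)=\Khtp(\mat N,D)=\Kh(\mat N,D)=\int_D Q^0(x',\mat N_1,\mat N_2)\,\dd x'$ for every $\mat N=\mat N_1+x_3\mat N_2\in\set S(D)$.

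For the inequality ``$\leq$'' in the first identity, I would fix an optimal $(\vect w,v)\in H^1_0(D,\R^2)\times H^2_0(D)$ for the right-hand side, set $\mat M':=(\mat M_1+\sym\nabla\vect w)+x_3(\mat M_2-\nabla^2 v)\in\set S(D)$, and take a minimizing sequence $(\vect\phi^{h_n})_n$ for $\Khtp(\mat M',D)$ with $\vect\phi^{h_n}=0$ on $\partial D\times I$ (admissible since $D$ is Lipschitz, cf.\ Remark \ref{2.rem:top}). The field $\vect\psi^{h_n}:=(w_1-x_3\partial_1 v,\ w_2-x_3\partial_2 v,\ v/h_n)^{\tau}+\vect\phi^{h_n}$ vanishes on $\partial D\times I$ and satisfies $\sym\nabla_{h_n}\vect\psi^{h_n}=\imath(\sym\nabla\vect w-x_3\nabla^2 v)+\sym\nabla_{h_n}\vect\phi^{h_n}$, whence $Q^{h_n}(x,\imath(\mat M)+\nabla_{h_n}\vect\psi^{h_n})=Q^{h_n}(x,\imath(\mat M')+\nabla_{h_n}\vect\phi^{h_n})$ and so $m_n\leq\int_{D\times I}Q^{h_n}(x,\imath(\mat M')+\nabla_{h_n}\vect\phi^{h_n})\,\dd x$. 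Passing to the $\limsup$ and using $\Khtp(\mat M',D)=\int_D Q^0(x',\mat M_1+\sym\nabla\vect w,\mat M_2-\nabla^2 v)\,\dd x'$ yields $\limsup_n m_n\leq\min_{\vect w,v}\int_D Q^0(x',\mat M_1+\sym\nabla\vect w,\mat M_2-\nabla^2 v)\,\dd x'$.

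For ``$\geq$'' in the first identity and for the second assertion at once, I would pass to a subsequence $(h_{n_k})$ realizing the relevant $\liminf$, take minimizers $\vect\psi^{h_{n_k}}$ (with uniformly bounded symmetrized scaled gradients), and apply Griso's decomposition (Lemma \ref{app:lem.limsup}) to extract, along a further subsequence, a fixed pair $(\vect w,v)\in H^1_0(D,\R^2)\times H^2_0(D)$ with $\vect\psi^{h_{n_k}}=(\vect w-x_3\nabla v,\ v/h_{n_k})^{\tau}+\tilde{\vect\psi}^{h_{n_k}}$ and $(\tilde\psi^{h_{n_k}}_1,\tilde\psi^{h_{n_k}}_2,h_{n_k}\tilde\psi^{h_{n_k}}_3)\to0$ in $L^2$. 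With $\mat M':=(\mat M_1+\sym\nabla\vect w)+x_3(\mat M_2-\nabla^2 v)$ the $k$-th minimal value equals $\int_{D\times I}Q^{h_{n_k}}(x,\imath(\mat M')+\nabla_{h_{n_k}}\tilde{\vect\psi}^{h_{n_k}})\,\dd x$; since $(\tilde\psi^{h_{n_k}}_1,\tilde\psi^{h_{n_k}}_2,h_{n_k}\tilde\psi^{h_{n_k}}_3)$ eventually lies in any prescribed $L^2$-neighbourhood of $0$, the neighbourhood characterization of $\Khtm$ in Remark \ref{2.rem:top} and the fact that a $\liminf$ along a subsequence dominates the full $\liminf$ give
\begin{equation*}
\liminf_n m_n\ \geq\ \Khtm(\mat M',D)=\int_D Q^0(x',\mat M_1+\sym\nabla\vect w,\mat M_2-\nabla^2 v)\,\dd x'\ \geq\ \min_{\vect w',v'}\int_D Q^0(x',\mat M_1+\sym\nabla\vect w',\mat M_2-\nabla^2 v')\,\dd x'\,,
\end{equation*}
which with the previous paragraph proves the first identity. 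For the constrained statement, note that $(\psi^{h_{n_k}}_1,\psi^{h_{n_k}}_2,h_{n_k}\psi^{h_{n_k}}_3)=(\vect w-x_3\nabla v,v)^{\tau}+(\tilde\psi^{h_{n_k}}_1,\tilde\psi^{h_{n_k}}_2,h_{n_k}\tilde\psi^{h_{n_k}}_3)\to(\vect w-x_3\nabla v,v)^{\tau}$ strongly in $L^2(D\times I)$, so the constraint passes to the limit and, using $\int_I x_3\,\dd x_3=0$,
\begin{equation*}
r^2\ \geq\ \|(\vect w-x_3\nabla v,v)\|_{L^2(D\times I)}^2=\|\vect w\|_{L^2(D)}^2+\tfrac1{12}\|\nabla v\|_{L^2(D)}^2+\|v\|_{L^2(D)}^2\ \geq\ \|\vect w\|_{L^2(D)}^2+\|v\|_{L^2(D)}^2\,,
\end{equation*}
so $(\vect w,v)$ is admissible for the constrained right-hand minimum, and the same chain of inequalities bounds $\liminf_n m_n^{(r)}$ from below by that minimum.

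The step I expect to be the main obstacle is the precise interaction with Griso's decomposition: one must verify that the fixed (Kirchhoff--Love) field it produces inherits the homogeneous lateral boundary data of $\vect\psi^{h_n}$, so that the limiting pair genuinely lies in $H^1_0(D,\R^2)\times H^2_0(D)$ and, dually, so that the profile used in the ``$\leq$''-direction vanishes on $\partial D\times I$; the strong $L^2$-convergence of the lower-order pieces then rests on the compact Sobolev embeddings $H^2(D)\hookrightarrow H^1(D)\hookrightarrow L^2(D)$ on the Lipschitz domain $D$. Everything else is the routine bookkeeping of $\Gamma$-convergence, the integral representation of Proposition \ref{prop:Int_form}, and the identity $\Kh=\Kht$.
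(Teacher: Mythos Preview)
Your proposal is correct and follows essentially the same route as the paper: both directions hinge on splitting off a Kirchhoff--Love profile, invoking Griso's decomposition (Lemma \ref{app:lem.limsup}) for the lower bound, and using the integral representation $\Kh=\Kht=\int_D Q^0$ together with the corrector sequence from Lemma \ref{lemma:equi-int1}/Remark \ref{rempromjena} for the upper bound. The ``main obstacle'' you flag---that the limiting pair $(\vect w,v)$ inherits zero boundary data on $\partial D$---is already built into Lemma \ref{app:lem.limsup} (take $\Gamma_d=\partial D$), and the constraint estimate $\|\vect w\|_{L^2}^2+\|v\|_{L^2}^2\leq r^2$ you compute explicitly is exactly the last assertion of that lemma.
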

\begin{proof}
We prove the statements by means of the $\Gamma$-convergence. 
For a fixed $\mat M\in\set S(\omega)$, take a minimizing sequence $(\vect\psi^{h_n})_n\subset H^1(D\times I,\R^3)$ with 
$\vect\psi^{h_n} = 0$ on $\pa D\times I$ satisfying
\begin{equation*}
\int_{D\times I}Q^{h_{n}}(x, \imath(\mat M) + \nabla_{h_{n}}\vect\psi^{h_n} )\dd x = 
\min_{\substack{\vect\psi\in H^1(D\times I,\R^3) \\
		\vect\psi=0\text{ on }\pa D\times I }} 
	\int_{D\times I}Q^{h_{n}}(x, \imath(\mat M) + \nabla_{h_{n}}\vect\psi )\dd x\,.
\end{equation*}
Comparing with the zero function and using equi-boundednes and equi-coercivity of $Q^{h_n}$, we find the uniform bound
\begin{equation*}
\|\sym \nabla_{h_n}\vect \psi^{h_n}\|_{L^2}\leq C\|\mat M\|_{L^2}\,,\quad n\in\N\,.
\end{equation*}
Applying Lemma \ref{app:lem.limsup}, there exist $\vect w\in H_0^1(D,\R^2)$, $v\in H_0^2(D)$, and 
sequences $(\varphi^{h_n})_n\subset H_0^2(D)$, $(\bar{\vect\psi}^{h_n})_n\subset H^1(D\times I,\R^3)$
satisfying 
\begin{equation*}
\sym \nabla_{h_n}\vect\psi^{h_n} = \imath(\sym \nabla'\vect w - x_3\nabla'^2v) + \sym \nabla_{h_n}\bar{\vect\psi}^{h_n}
\end{equation*}
and $(\bar{\psi}_1^{h_n}, \bar{\psi}_2^{h_n}, h_n \bar{\psi}_3^{h_n}) \to 0$ in the $L^2$-norm as $n\to\infty$.
From the definition of $Q^0$ we obtain the lower bound
\begin{equation*}
\liminf_{n\to\infty}\min_{\substack{\vect\psi\in H^1(D\times I,\R^3) \\
	\vect\psi=0\text{ on }\pa D\times I }} 
	\int_{D\times I}Q^{h_{n}}(x, \imath(\mat M) + \nabla_{h_{n}}\vect\psi )\dd x
\geq \min_{\substack{\vect w\in H_0^1(D,\R^2)\,, \\ v\in H^2_0(D)}}
	\int_D Q^0(x',\mat M_1 + \sym \nabla'\vect w, \mat M_2 - \nabla'^2 v)\dd x'\,.
\end{equation*}
To prove the upper bound, take $\vect w_0\in H_0^1(D,\R^2)$ and $v_0\in H_0^2(D)$ such that
\begin{equation*}
\int_D Q^0(x',\mat M_1 + \sym \nabla'\vect w_0, \mat M_2 - \nabla'^2 v_0)\dd x' = 
\min_{\substack{\vect w\in H_0^1(D,\R^2)\,, \\ v\in H^2_0(D)}}
	\int_D Q^0(x',\mat M_1 + \sym \nabla'\vect w, \mat M_2 - \nabla'^2 v)\dd x'\,.
\end{equation*} 
Using Lemma \ref{lemma:equi-int1} (cf.~Remark \ref{rempromjena}), there exists a subsequence of $(h_n)_n$ (in the same notation)
and a sequence $(\vect\psi^{h_n})_n\subset H^1(D\times I)$ such that $\vect{\psi}^{h_n} = 0$ on $\pa D\times I$ and
\begin{align*}
\int_D Q^0(x',\mat M_1 + & \sym \nabla'\vect w_0, \mat M_2 - \nabla'^2 v_0)\dd x' \\
&= \lim_{n\to\infty}\int_{D\times I} Q^{h_n}(x,\imath(\mat M + \sym \nabla'\vect w_0 - x_3 \nabla'^2 v_0) +  \nabla_{h_n}\vect\psi^{h_n})\dd x
\end{align*}
Defining
\begin{equation*}
\vect{l}^{h_n} = \left(\begin{array}{c} \vect w_0 \\ \dfrac{v_0}{h_n}\end{array}\right)
 - x_3 \left(\begin{array}{c} \nabla'v_0 \\ 0 \end{array}\right)\,,\quad n\in\N\,,
\end{equation*}
and noticing that $\vect l^{h_n} = 0$ on $\pa D\times I$ and
\begin{equation*}
\sym\nabla_{h_n}\vect l^{h_n} = \imath(\sym\nabla'\vect w_0 - x_3\nabla'^2v_0)\,,
\end{equation*}
we conclude
\begin{align*}
\limsup_{n\to\infty} & \min_{\substack{\vect\psi\in H^1(D\times I,\R^3) \\
	\vect\psi=0\text{ on }\pa D\times I }} 
	\int_{D\times I}Q^{h_{n}}(x, \imath(\mat M) + \nabla_{h_{n}}\vect\psi )\dd x \\
& \leq \limsup_{n\to\infty}	\int_{D\times I}Q^{h_{n}}(x, \imath(\mat M) + \nabla_{h_{n}}(\vect l^{h_n} + \vect\psi^{h_n}) )\dd x\\
& = \int_D Q^0(x',\mat M_1 + \sym \nabla'\vect w_0, \mat M_2 - \nabla'^2 v_0)\dd x'\,,
\end{align*}
which establishes the upper bound.
The second statement follows the same way as the above proof of the lower bound.
\end{proof}

The statement of the following lemma is well known. Briefly, it states that the pointwise convergence of quadratic forms implies the convergence of minimizers.
\begin{lemma}\label{3.lem:minimizers}
Let $D\subset \R^2$ be open, bounded set having Lipschitz boundary and let $Q^n,Q: D\times \R_{\sym}^{2\times 2}\times \R_{\sym}^{2\times 2}\to \R$, $n\in\N$, 
be uniformly bounded and coercive quadratic forms satisfying 
$\lim_{n\to\infty}Q^n(x',\mat M_1,\mat M_2) = Q(x',\mat M_1,\mat M_2)$ for a.e.~$x'\in D$ and for all
$\mat M_1,\mat M_2\in\R_{\sym}^{2\times 2}$. Then for arbitrary 
$\mat M_1,\mat M_2\in L^2(D,\R_{\sym}^{2\times 2})$
\begin{align*}
\lim_{n\to\infty}\min_{\substack{\vect u\in H^1(D,\R^2)\,,\, v\in H^2(D)\,, \\ 
			(\vect u,v)\in \set N(D)}}
	&\int_D Q^n(x',\mat M_1 + \sym \nabla\vect u, \mat M_2 - \nabla^2 v)\dd x' \\
	& = \min_{\substack{\vect u\in H^1(D,\R^2)\,,\, v\in H^2(D)\,, \\ 
			(\vect u,v)\in \set N(D)}}
	\int_D Q(x',\mat M_1 + \sym \nabla\vect u, \mat M_2 - \nabla^2 v)\dd x'\,,
\end{align*}
where $\set N(D)$ is a closed subset in weak $H^1(D,\R^2)\times H^2(D)$ topology satisfying 
\begin{equation*}
(\vect u,v)\in\set N(D) \Rightarrow \|\vect u\|_{H^1} + \|v\|_{H^2} \leq C(\|\sym \vect u\|_{L^2} + \|v\|_{L^2})\,.
\end{equation*}
Moreover, if $(\vect u_n, v_n)_n$ are minimizers of 
\begin{equation*}
\min_{\substack{\vect u\in H^1(D,\R^2)\,,\, v\in H^2(D)\,, \\ 
			(\vect u,v)\in \set N(D)}}
	\int_D Q^n(x',\mat M_1 + \sym \nabla\vect u, \mat M_2 - \nabla^2 v)\dd x'\,,
\end{equation*}
then we have $\|\sym\nabla(\vect u - \tilde{\vect{u}})\|_{L^2}\to 0$ and $\|\nabla^2(v_n-\tilde v)\|_{L^2}\to 0$ as 
$n\to\infty$, where $(\tilde{\vect u},v)$ are minimizers of 
\begin{equation*}
\min_{\substack{\vect u\in H^1(D,\R^2)\,,\, v\in H^2(D)\,, \\ 
			(\vect u,v)\in \set N(D)}}
	\int_D Q(x',\mat M_1 + \sym \nabla\vect u, \mat M_2 - \nabla^2 v)\dd x'\,.
\end{equation*}
\end{lemma}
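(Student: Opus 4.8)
The plan is to treat this as a standard $\Gamma$-convergence statement for a sequence of coercive quadratic functionals on the Hilbert space $H^1(D,\R^2)\times H^2(D)$, constrained to the closed subspace $\set N(D)$. First I would define the functionals
$$
\energy^n(\vect u,v) = \int_D Q^n(x',\mat M_1 + \sym\nabla\vect u,\mat M_2 - \nabla^2 v)\,\dd x',\qquad
\energy(\vect u,v) = \int_D Q(x',\mat M_1 + \sym\nabla\vect u,\mat M_2 - \nabla^2 v)\,\dd x',
$$
restricted to $(\vect u,v)\in\set N(D)$. By the uniform coercivity and boundedness of the $Q^n$ (with constants independent of $n$) together with the Poincar\'e-type estimate built into the definition of $\set N(D)$, all $\energy^n$ are equi-coercive on $\set N(D)$ in the weak $H^1\times H^2$ topology, and their sublevel sets are weakly compact. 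Hence, by the fundamental theorem of $\Gamma$-convergence, once I show $\energy^n\xrightarrow{\Gamma}\energy$ with respect to weak convergence in $H^1\times H^2$, both the convergence of minima and the (sub)convergence of minimizers follow.

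For the $\Gamma$-convergence itself, the crucial point is that for any fixed $(\vect u,v)$ the integrand $Q^n(x',\cdot,\cdot)$ converges pointwise a.e.\ to $Q(x',\cdot,\cdot)$ and is uniformly bounded by $\beta(|\sym\mat M_1 + \sym\nabla\vect u|^2 + |\mat M_2 - \nabla^2 v|^2)$, so the dominated convergence theorem gives $\energy^n(\vect u,v)\to\energy(\vect u,v)$ for every fixed $(\vect u,v)$; this immediately supplies the $\limsup$ (recovery) inequality with the constant sequence. For the $\liminf$ inequality, suppose $(\vect u_n,v_n)\rightharpoonup(\vect u,v)$ weakly in $H^1\times H^2$; then $\sym\nabla\vect u_n\rightharpoonup\sym\nabla\vect u$ and $\nabla^2 v_n\rightharpoonup\nabla^2 v$ weakly in $L^2$. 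Writing $G^n(x',\xi) := Q^n(x',\mat M_1 + \xi_1,\mat M_2 - \xi_2)$, each $G^n$ is convex in $\xi=(\xi_1,\xi_2)$, nonnegative, and the $G^n$ converge pointwise to $G$; the standard lower-semicontinuity argument for convex integrands under weak $L^2$ convergence of the arguments, combined with the pointwise convergence of the integrands (one may, e.g., use Mazur's lemma on the arguments together with Fatou on the integrands, or invoke Ioffe's theorem), yields $\liminf_n \energy^n(\vect u_n,v_n)\ge\energy(\vect u,v)$. This establishes $\energy^n\xrightarrow{\Gamma}\energy$.

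From the $\Gamma$-convergence and equi-coercivity, $\min\energy^n\to\min\energy$ and any sequence of minimizers $(\vect u_n,v_n)$ is weakly precompact in $\set N(D)$ with every weak cluster point a minimizer of $\energy$. The last assertion — norm convergence $\|\sym\nabla(\vect u_n-\tilde{\vect u})\|_{L^2}\to 0$ and $\|\nabla^2(v_n-\tilde v)\|_{L^2}\to 0$ — is where I expect the only real work, and it uses strict convexity: since $Q$ is coercive, $\energy$ is strictly convex in $(\sym\nabla\vect u,\nabla^2 v)$, so the minimizer is unique modulo the kernel of $(\vect u,v)\mapsto(\sym\nabla\vect u,\nabla^2 v)$, and the claim is really about convergence of these ``strain'' quantities. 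I would argue as follows: along a subsequence $(\vect u_n,v_n)\rightharpoonup(\tilde{\vect u},\tilde v)$; then
$$
\alpha'\big(\|\sym\nabla\vect u_n - \sym\nabla\tilde{\vect u}\|_{L^2}^2 + \|\nabla^2 v_n - \nabla^2\tilde v\|_{L^2}^2\big)
\le \int_D\! Q^n\big(x',\tfrac{\sym\nabla\vect u_n - \sym\nabla\tilde{\vect u}}{\,}, \tfrac{-(\nabla^2 v_n - \nabla^2\tilde v)}{\,}\big)\dd x'
$$
up to the usual parallelogram-law manipulation: expand $Q^n$ at the midpoint, use that $Q^n(x',\mat M_1 + \sym\nabla\tfrac{\vect u_n+\tilde{\vect u}}{2},\mat M_2 - \nabla^2\tfrac{v_n+\tilde v}{2})\ge\min\energy^n$, and that $\int Q^n(x',\mat M_1+\sym\nabla\vect u_n,\mat M_2-\nabla^2 v_n)\to\min\energy = \int Q(\dots,\tilde{\vect u},\tilde v)$ while the cross terms, being linear in the weakly converging $(\sym\nabla\vect u_n - \sym\nabla\tilde{\vect u},\nabla^2 v_n - \nabla^2\tilde v)$ paired against the fixed $L^2$ functions coming from $Q^n$ evaluated at $(\tilde{\vect u},\tilde v)$ (which converge strongly in $L^2$ by dominated convergence), tend to zero. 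This forces the right-hand side to $0$, hence the strong convergence of the strains. Since every subsequence has a further subsequence along which this holds with the same limiting strains (uniqueness of the minimizing strain for the strictly convex limit $\energy$), the full sequence converges, completing the proof.
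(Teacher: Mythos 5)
Your proof takes essentially the same route as the paper: expand the quadratic form around the weak limit of the minimizers, use dominated convergence on the term involving only the fixed $(\tilde{\vect u},\tilde v)$, use the weak--strong pairing to kill the cross term, and use coercivity on the nonnegative remainder to get both the lower bound and the strong convergence of strains. Packaging the first part as a $\Gamma$-convergence statement is cosmetic. The one place where you are loose is the $\liminf$ inequality: neither Ioffe's theorem nor Mazur-plus-Fatou applies out of the box here, because the integrand itself depends on $n$. Ioffe is stated for a fixed $f(x,\cdot,\cdot)$; the Mazur argument compares $G^n$ evaluated at convex combinations of $\xi_k$, $k\ge m$, against the values $G^k(\xi_k)$, and those are different quadratic forms, so convexity of a single $G^n$ does not let you close the loop. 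One would need locally uniform (Egorov-type) convergence of $Q^n\to Q$ together with equi-integrability of $|\sym\nabla\vect u_n|^2+|\nabla^2 v_n|^2$, and the latter is not available from mere $L^2$-boundedness. The clean fix is exactly the expansion you already write down in the strong-convergence step: decompose
\[
Q^n(\cdot,\mat F^n) = Q^n(\cdot,\tilde{\mat F}) + 2\,\tensor{A}^n(\cdot)\tilde{\mat F} : (\mat F^n-\tilde{\mat F}) + Q^n(\cdot,\mat F^n-\tilde{\mat F}),
\]
with $\mat F^n=(\mat M_1+\sym\nabla\vect u_n,\mat M_2-\nabla^2 v_n)$ and $\tilde{\mat F}$ the limit; then DCT on the first term, weak--strong pairing (using $\tensor{A}^n\tilde{\mat F}\to\tensor{A}\tilde{\mat F}$ strongly in $L^2$) on the cross term, and nonnegativity plus coercivity on the third, which is precisely the paper's argument. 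So the ingredients are correct and complete in your write-up; the recommendation is simply to replace the appeal to Ioffe / Mazur--Fatou by the expansion argument you already carry out two sentences later, so that the $\liminf$ step is self-contained and does not rest on theorems whose hypotheses are not met.
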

\begin{proof}
For the sake of brevity, assume $\mat M_1 = \mat M_2 = 0$. By the direct methods of the calculus of variations,
using the uniform coercivity and the Korn's inequality, minima $(\vect u_n, v_n)\in\set N(D)$ always exist:
\begin{equation*}
\int_D Q^n(x', \sym \nabla\vect u_n, - \nabla^2 v_n)\dd x' = \min_{\substack{\vect u\in H^1(D,\R^2)\,,\, v\in H^2(D)\,, \\ 
			(\vect u,v)\in \set N(D)}}\int_D Q^n(x',\sym \nabla\vect u,- \nabla^2 v)\dd x'\,.
\end{equation*} 
Let $\tilde{\vect u}\in H^1(D,\R^2)$ and $\tilde{v}\in H^2(D)$ be weak limits of $(\vect{u}_n)_n$ and $(v_n)_n$, 
respectively.
For $x'\in D$, let $\tensor{A}^n(x'): \R_{\sym}^{2\times 2}\times \R_{\sym}^{2\times 2}\to 
\R_{\sym}^{2\times 2}\times \R_{\sym}^{2\times 2}$ be the corresponding linear symmetric operator 
of the form $Q^n(x',\cdot,\cdot)$ satisfying
\begin{equation*}
Q^n(x',\mat M_1,\mat M_2) = \tensor{A}^n(x')(\mat M_1,\mat M_2) : (\mat M_1,\mat M_2)\,,
\quad\forall \mat M_1,\mat M_2\in \R_{\sym}^{2\times 2}\,,
\end{equation*}
and $\tensor{A}^n(x')$ be the corresponding operator of the form $Q(x',\cdot,\cdot)$. Note that $\tensor{A}^n(x')$, 
 $\tensor{A}(x')$ are bounded and satisfy $\tensor{A}^n(x')\to \tensor{A}(x')$ for a.e.~$x'\in D$. Therefore
\begin{align}\label{bukal3}
\int_D Q^n(x', & \sym \nabla\vect u_n, \nabla^2 v_n)\dd x'  = 
\int_D \tensor{A}^n(x')(\sym \nabla\tilde{\vect u}, \nabla^2 \tilde v) : (\sym \nabla\tilde{\vect u}, \nabla^2 \tilde v)\dd x'\\ \nonumber
& \quad + 2\int_D \tensor{A}^n(x')(\sym \nabla\tilde{\vect u}, \nabla^2 \tilde v)
 : (\sym \nabla(\vect u_n - \tilde{\vect u}), \nabla^2(v_n - \tilde v))\dd x'\\ \nonumber
& \quad + \int_D \tensor{A}^n(x')(\sym \nabla(\vect u_n - \tilde{\vect u}), \nabla^2(v_n - \tilde v))
 : (\sym \nabla(\vect u_n - \tilde{\vect u}), \nabla^2(v_n - \tilde v))\dd x'\\ \nonumber
& \geq \int_D \tensor{A}^n(x')(\sym \nabla\tilde{\vect u}, \nabla^2\tilde v) : (\sym \nabla\tilde{\vect u}, \nabla^2\tilde v)\dd x'\\ \nonumber
& \quad + 2\int_D \tensor{A}^n(x')(\sym \nabla\tilde{\vect u}, \nabla^2 \tilde v)
 : (\sym \nabla(\vect u_n - \tilde{\vect u}), \nabla^2(v_n - \tilde v))\dd x'\\ \nonumber
& \quad + \frac{\alpha}{12}(\|\sym \nabla(\vect u_n - \tilde{\vect u})\|_{L^2}^2 + \|\nabla^2(v_n - \tilde v)\|_{L^2}^2)\,.
\end{align}
By the Lebesgue dominated convergence theorem 
\begin{equation*}
\int_D \tensor{A}^n(x')(\sym \nabla\tilde{\vect u}, \nabla^2 \tilde v) : (\sym \nabla\tilde{\vect u}, \nabla^2 \tilde v)\dd x'
\to \int_D \tensor{A}(x')(\sym \nabla\tilde{\vect u}, \nabla^2 \tilde v) : (\sym \nabla\tilde{\vect u}, \nabla^2 \tilde v)\dd x'\,,
\end{equation*}
as $n\to\infty$.
Using the strong convergence of operators $\tensor{A}^n(x')\to\tensor{A}(x')$ and the weak convergence of the sequences
$(\vect{u}_n)_n$ and $(v_n)_n$, we conclude 
\begin{equation*}
\int_D \tensor{A}^n(x')(\sym \nabla\tilde{\vect u}, \nabla^2 \tilde v)
 : (\sym \nabla(\vect u_n - \tilde{\vect u}), \nabla^2(v_n - \tilde v))\dd x' \to 0\,.
\end{equation*}
Thus, 
\begin{equation*}
\liminf_{n\to\infty}\int_D Q^n(x', \sym \nabla\vect u_n, \nabla^2 v_n)\dd x' \geq 
\int_D Q(x', \sym \nabla\tilde{\vect u}, \nabla^2 \tilde v)\dd x'\,,
\end{equation*}
which shows the lower bound. The upper bound is trivial by simply using the constant sequences 
$\vect{u}_n = \tilde{\vect u}$ and $v_n = \tilde{v}$. Along the same path we also proved that 
$(\tilde{\vect u}, \tilde v)$ are indeed minimizers of
\begin{equation*}
\int_D Q(x', \sym \nabla\vect u, \nabla^2 v)\dd x'\,.
\end{equation*}
The strong convergence follows by going back to the inequality \eqref{bukal3} and letting $n \to \infty$. 
\end{proof}
The following result is analogous to \cite[Lemma 2.3]{BaBa09}. 
\begin{lemma}\label{3.cor:minimizers}
Let $Q:\omega \times \R_{\sym}^{2\times 2}\times \R_{\sym}^{2\times 2}\to \R$ be uniformly bounded and coercive quadratic functional and
$(s_m)_m$ decreasing to zero sequence. Then for a.e.~$x_0'\in\omega$ and arbitrary $D\subset[0,1]^2$, 
$\mat M_1,\mat M_2\in L^2(D,\R_{\sym}^{2\times 2})$,
\begin{align*}
\lim_{m\to\infty}\min_{\substack{\vect u\in H^1(D,\R^2)\,,\, v\in H^2(D)\,, \\ 
			(\vect u,v)\in \set N(D)}}
	&\int_D Q(x_0' + s_m z',\mat M_1 + \sym \nabla\vect u, \mat M_2 - \nabla^2 v)\dd z' \\
	& = \min_{\substack{\vect u\in H^1(D,\R^2)\,,\, v\in H^2(D)\,, \\ 
			(\vect u,v)\in \set N(D)}}
	\int_D Q(x_0',\mat M_1 + \sym \nabla\vect u, \mat M_2 - \nabla^2 v)\dd z'\,,
\end{align*}
where $\set N(D)$ is from Lemma \ref{3.lem:minimizers} above.
\end{lemma}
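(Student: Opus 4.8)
The plan is to freeze the coefficients of $Q$ at a Lebesgue point and to recognize the left-hand side as a family of minimum problems whose integrands converge, so that Lemma~\ref{3.lem:minimizers} can be invoked. First I would encode $Q$ by its coefficient field: for a.e.~$x'\in\omega$ let $\tensor{A}(x')$ be the bounded symmetric linear operator on $\R_{\sym}^{2\times2}\times\R_{\sym}^{2\times2}$ with $Q(x',\mat M_1,\mat M_2)=\tensor{A}(x')(\mat M_1,\mat M_2):(\mat M_1,\mat M_2)$ for all $\mat M_1,\mat M_2$. By the uniform boundedness and coercivity of $Q$, the map $x'\mapsto\tensor{A}(x')$ belongs to $L^\infty(\omega)$ and its values are equi-bounded and equi-coercive with the constants of $Q$. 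Applying the Lebesgue differentiation theorem to the finitely many entries of $\tensor{A}$, almost every $x_0'\in\omega$ is a Lebesgue point of $\tensor{A}$, i.e.~the average of $|\tensor{A}(\cdot)-\tensor{A}(x_0')|$ over $B(x_0',r)$ tends to $0$ as $r\downarrow0$; fix such an $x_0'$ and fix $D\subset[0,1]^2$ open with Lipschitz boundary.

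Next I would perform the blow-up change of variables $x'=x_0'+s_mz'$. For $m$ large enough that $x_0'+s_m[0,1]^2\subset\omega$, one has $x_0'+s_mD\subset B(x_0',2s_m)$ with $|x_0'+s_mD|=s_m^2|D|$ and $|B(x_0',2s_m)|=4\pi s_m^2$, so
\[
\frac{1}{|D|}\int_D\bigl|\tensor{A}(x_0'+s_mz')-\tensor{A}(x_0')\bigr|\,\dd z'
=\frac{1}{s_m^{2}|D|}\int_{x_0'+s_mD}\bigl|\tensor{A}(x')-\tensor{A}(x_0')\bigr|\,\dd x',
\]
and the right-hand side is bounded by $\tfrac{4\pi}{|D|}$ times the average of $|\tensor{A}(\cdot)-\tensor{A}(x_0')|$ over $B(x_0',2s_m)$, hence tends to $0$ as $m\to\infty$. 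Thus the rescaled coefficient fields $\tensor{A}_m:=\tensor{A}(x_0'+s_m\,\cdot\,)$ converge to the constant field $\tensor{A}(x_0')$ in $L^1(D)$ (viewed as maps into the finite-dimensional space of symmetric operators).

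Finally I would run a subsequence argument. Given any subsequence of $(s_m)_m$, from $\tensor{A}_m\to\tensor{A}(x_0')$ in $L^1(D)$ I extract a further subsequence $(s_{m_k})_k$ along which $\tensor{A}_{m_k}(z')\to\tensor{A}(x_0')$ for a.e.~$z'\in D$; equivalently, the quadratic forms $Q_{m_k}(z',\cdot,\cdot):=Q(x_0'+s_{m_k}z',\cdot,\cdot)$ converge to $Q(x_0',\cdot,\cdot)$ pointwise for a.e.~$z'\in D$, uniformly on bounded subsets of $\R_{\sym}^{2\times2}\times\R_{\sym}^{2\times2}$. These forms are uniformly bounded and coercive with the constants of $Q$, and $\mat M_1,\mat M_2\in L^2(D,\R_{\sym}^{2\times2})$, so Lemma~\ref{3.lem:minimizers} applies with $Q^n=Q_{m_k}$ and yields convergence of the corresponding minima over $\set N(D)$ to $\min_{(\vect u,v)\in\set N(D)}\int_D Q(x_0',\mat M_1+\sym\nabla\vect u,\mat M_2-\nabla^2v)\,\dd z'$. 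Since this limit is independent of the chosen subsequence, the whole sequence $m\mapsto\min_{(\vect u,v)\in\set N(D)}\int_D Q(x_0'+s_mz',\mat M_1+\sym\nabla\vect u,\mat M_2-\nabla^2v)\,\dd z'$ converges to it, which is the assertion. The step I expect to be the main obstacle is precisely this last one: Lemma~\ref{3.lem:minimizers} demands a.e.~pointwise convergence of the integrands (simultaneously in all matrix arguments), whereas the blow-up only produces $L^1(D)$-convergence of the coefficient field, which is why the argument must be wrapped inside a subsequence extraction; once that is set up, the remainder is a routine blow-up computation at a Lebesgue point.
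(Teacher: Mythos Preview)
Your argument is correct and follows essentially the same route as the paper: freeze the coefficients at a Lebesgue point, obtain a.e.~pointwise convergence of the blown-up integrands $Q(x_0'+s_mz',\cdot,\cdot)\to Q(x_0',\cdot,\cdot)$, and then feed this into Lemma~\ref{3.lem:minimizers}. The paper simply packages the Lebesgue-point/blow-up step by invoking \cite[Lemma~5.38]{AFP00}, whereas you unwind it explicitly via the Lebesgue differentiation theorem for the coefficient tensor $\tensor{A}$ and an $L^1$-to-a.e.~subsequence extraction; your explicit ``every subsequence has a further subsequence with the same limit'' wrap-up is in fact cleaner than the paper's terse phrasing, which mentions passing to a subsequence without spelling out the return to the full sequence.
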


\begin{proof}
Using Lemma 5.38 from \cite{AFP00}, for a.e.~$x_0'\in\omega$, there exists a subsequence, still denoted by $(s_m)_m$,
and $E\subset [0,1]^2$ of measure zero such that
\begin{equation*}
\lim_{m\to\infty}Q(x_0' + s_mz',\cdot,\cdot) = Q(x_0',\cdot,\cdot)
\end{equation*}
locally uniformly in $\R_{\sym}^{2\times 2}\times \R_{\sym}^{2\times 2}$ for every $z'\in [0,1]^2\backslash E$. 
The statement then follows from Lemma \ref{3.lem:minimizers}.
\end{proof}

\section{Locality of $\Gamma$-closure}\label{sec:4}  

In this section we prove Theorem \ref{thm:2}, which states that every effective energy density obtained by 
mixing $N$ different materials can be locally (in almost every point) recovered as pointwise 
limit of in-plane periodically homogenized energy densities.  We assume the pointwise uniform boundedness and coercivity condition \eqref{2.eq:A1}. 

\noindent First we recall the definition of variational functinals, which will be frequently used in the sequel. 

\begin{remark}
Utilizing topological characterizations and Lemma \ref{lemma:equi-int1} we have the following identities for every $A \subset \omega$ with Lipschitz boundary (see Remark \ref{2.rem:top}(d))
\begin{align*}
\Kchi (\mat M,A) 
	& = \sup_{\set{U}\subset\set{N}(0)}\liminf_{n\to\infty}\widetilde{\set K}_{\chi^{h_n}}(\mat M, A,\set U)
	  = \sup_{\set{U}\subset\set{N}(0)}\limsup_{n\to\infty}\widetilde{\set K}_{\chi^{h_n}}(\mat M, A,\set U)\,,\\
	& = \sup_{\set{U}\subset\set{N}(0)}\liminf_{n\to\infty}\widetilde{\set K}_{\chi^{h_n}}^0(\mat M, A,\set U)
	  = \sup_{\set{U}\subset\set{N}(0)}\limsup_{n\to\infty}\widetilde{\set K}_{\chi^{h_n}}^0(\mat M, A,\set U)\,,
\end{align*}
where $\widetilde{\set K}_{\chi^{h_n}}$ is as in Remark \ref{2.rem:top}(a) and  
\begin{equation}\label{4.def:Kchi0}
\widetilde{\set K}_{\chi^{h_n}}^0(\mat M, A,\set U) = \inf_{\substack{\vect{\psi}\in H^1(A \times I,\R^3)\\ 
 				(\psi_1,\psi_2,h_n\psi_3)\in\set{U} \\ \vect\psi = 0\text{ on }\pa A\times I}}\int_{A\times I}
 				Q^{\chi^{h_n}}(x,\imath(\mat M) + \sym\nabla_{h_n}\vect\psi)\dd x\,.
\end{equation}
\end{remark}
\noindent The following result, given in \cite[Lemma 3.2]{BaBa09}, states that every effective energy density can be
generated by a sequence of mixtures of fixed volume portions. 
\begin{lemma}\label{4.lem:fvfrac}
Let $(\chi^{h_n})_n\subset\set X^N(\Omega)$ be a sequence of mixtures with limit energy density $Q$ and
assume that $\chi^{h_n} \overset{*}{\rightharpoonup}\mu\in L^\infty(\Omega,[0,1]^N)$. 
There exists another sequence
$(\tilde{\chi}^{h_n})_n\subset\set X^N(\Omega)$ such that $\tilde\chi^{h_n} \overset{*}{\rightharpoonup}\mu$
in $L^\infty(\Omega,[0,1]^N)$ and 
\begin{equation*}
\int_\Omega\tilde{\chi}_i^{h_n}(x)\dd x = \bar{\mu}_i\,,\quad \forall n\in\N\,,\ i=1,\ldots,N\,,
\end{equation*}
where $\bar{\mu}_i = \int_\Omega \mu_i(x)\dd x$. Moreover, the sequence of mixtures with fixed volume
fractions $(\tilde{\chi}^{h_n})_n$ has the same limit energy density $Q$.
\end{lemma}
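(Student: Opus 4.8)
The plan is to correct the volume fractions of the $\chi^{h_n}$ by altering each of them on a set of vanishing measure, and then to check that such a perturbation affects neither the weak-$*$ limit nor the limit energy density. First I would record the discrepancies: testing $\chi^{h_n}\overset{*}{\rightharpoonup}\mu$ against the constant function $1$ gives $\int_\Omega\chi_i^{h_n}\,\dd x\to\bar\mu_i$, hence $\lambda_i^n:=\int_\Omega\chi_i^{h_n}\,\dd x-\bar\mu_i\to0$ for $i=1,\dots,N$, and $\sum_{i=1}^N\lambda_i^n=0$ because $\sum_i\chi_i^{h_n}\equiv1\equiv\sum_i\mu_i$. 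Set $t_n:=\sum_{\lambda_i^n>0}\lambda_i^n=-\sum_{\lambda_j^n<0}\lambda_j^n\to0$.

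Next I would build $\tilde\chi^{h_n}$ by a finite redistribution step. For each index $i$ with $\lambda_i^n>0$ pick a measurable $A_i^n\subset\{\chi_i^{h_n}=1\}$ with $|A_i^n|=\lambda_i^n$ (possible since $|\{\chi_i^{h_n}=1\}|=\bar\mu_i+\lambda_i^n\ge\lambda_i^n$, as $\bar\mu_i\ge0$), partition $E_n:=\bigcup_{\lambda_i^n>0}A_i^n$ among the indices $j$ with $\lambda_j^n<0$ so that $j$ is assigned total measure $-\lambda_j^n$ (possible since the surpluses and the deficits both sum to $t_n$), and set $\tilde\chi^{h_n}:=\chi^{h_n}$ on $\Omega\setminus E_n$, while on the portion of $E_n$ assigned to $j$ put $\tilde\chi^{h_n}_j=1$ and all other components $0$. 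Then $\tilde\chi^{h_n}\in\set X^N(\Omega)$, $\int_\Omega\tilde\chi_i^{h_n}\,\dd x=\bar\mu_i$ for every $n$ and every $i$, and $\{\tilde\chi^{h_n}\ne\chi^{h_n}\}\subset E_n$ with $|E_n|=t_n\to0$. Weak-$*$ convergence $\tilde\chi^{h_n}\overset{*}{\rightharpoonup}\mu$ is then immediate, since $|\tilde\chi^{h_n}-\chi^{h_n}|\le2\,\mathbbm{1}_{E_n}$ and, for any $g\in L^1(\Omega)$, absolute continuity of the integral gives $\int_\Omega(\tilde\chi_i^{h_n}-\chi_i^{h_n})\,g\,\dd x\to0$.

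The substantive point is that $(\tilde\chi^{h_n})$ has the same limit energy density $Q$. Each $Q_i$ obeys \eqref{2.eq:A1}, hence so do $Q^{\chi^{h_n}}$ and $Q^{\tilde\chi^{h_n}}$, and I would prove directly that $\mathcal K^+_{(\tilde\chi^{h_n})}(\mat M,A)\le\Kchi(\mat M,A)\le\mathcal K^-_{(\tilde\chi^{h_n})}(\mat M,A)$ for every $\mat M\in\set S(\omega)$ and open $A\subset\omega$; since $\mathcal K^-\le\mathcal K^+$ always, this forces $\mathcal K_{(\tilde\chi^{h_n})}=\Kchi$, and then \eqref{bukal1b} with $A=B(x',r)$ identifies the limit energy density of $(\tilde\chi^{h_n})$ with $Q$. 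For the first inequality I would test $\mathcal K^+_{(\tilde\chi^{h_n})}(\mat M,A)$ with a corrector sequence $(\vect\psi^{h_n})_n$ furnished by Lemma \ref{lemma:equi-int1} applied to $Q^{\chi^{h_n}}$ and the field $\mat M\mathbbm{1}_{A\times I}\in\set S(\omega)$ (recall \eqref{trivijala}): it has $(|\sym\nabla_{h_n}\vect\psi^{h_n}|^2)_n$ equi-integrable and $\Kchi(\mat M,A)=\lim_n\int_\Omega Q^{\chi^{h_n}}(x,\imath(\mat M\mathbbm{1}_{A\times I})+\nabla_{h_n}\vect\psi^{h_n})\,\dd x$. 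By \eqref{2.eq:A1} one has the pointwise bound $|Q^{\tilde\chi^{h_n}}(x,F)-Q^{\chi^{h_n}}(x,F)|\le2\beta|\sym F|^2$; taking $F=\imath(\mat M\mathbbm{1}_{A\times I})+\nabla_{h_n}\vect\psi^{h_n}$ and $|\sym F|^2\le2|\mat M|^2+2|\sym\nabla_{h_n}\vect\psi^{h_n}|^2$, the two integrands differ by an equi-integrable sequence supported in $E_n$, so the difference of the integrals over $\Omega$ tends to $0$ as $|E_n|\to0$; hence $\mathcal K^+_{(\tilde\chi^{h_n})}(\mat M,A)\le\Kchi(\mat M,A)$. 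For the second inequality, given any admissible test sequence for $\mathcal K^-_{(\tilde\chi^{h_n})}(\mat M,A)$ I would first bound its symmetrized scaled gradients (comparing with the zero function and using coercivity in \eqref{2.eq:A1}) and then, by the equi-integrability substitution used in the proof of Lemma \ref{lemma:equi-int1}, replace it by a sequence $(\vect\phi^{h_n})_n$ with $(|\sym\nabla_{h_n}\vect\phi^{h_n}|^2)_n$ equi-integrable and no larger limiting energy; the same $E_n$-estimate then reduces the matter to $\liminf_n\int_\Omega Q^{\chi^{h_n}}(x,\imath(\mat M\mathbbm{1}_{A\times I})+\nabla_{h_n}\vect\phi^{h_n})\,\dd x\ge\Kchim(\mat M,A)=\Kchi(\mat M,A)$.

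I expect the main obstacle to be exactly this last comparison: substituting the corrector of one mixture into the functional of the other is legitimate only because Lemma \ref{lemma:equi-int1}(b) together with the accompanying substitution lemmas, all of which rely on the pointwise coercivity assumed throughout Section \ref{sec:4}, provide equi-integrability of the symmetrized scaled gradients, so that the energy contribution on the vanishing set $E_n$ is negligible. The redistribution construction itself and the weak-$*$ statement are routine.
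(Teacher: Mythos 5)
Your proposal is correct and follows essentially the same strategy as the paper: construct $\tilde\chi^{h_n}$ so that $|\{\chi^{h_n}\neq\tilde\chi^{h_n}\}|\to 0$ (the paper delegates this step to \cite[Lemma 3.2]{BaBa09}, whereas you spell out the redistribution explicitly, which is a welcome elaboration), and then observe that the two energies of any sequence with equi-integrable squared symmetrized scaled gradients differ by at most $2\beta\int_{\{\chi^{h_n}\neq\tilde\chi^{h_n}\}}|\imath(\mat M)+\sym\nabla_{h_n}\vect\psi^{h_n}|^2\,\dd x\to 0$. Combining this with the existence of equi-integrable corrector sequences from Lemma \ref{lemma:equi-int1}(b) and the equi-integrable substitution from Lemmas \ref{ekvi1}--\ref{ekvi2} to pass between $\Kchi$ and $\mathcal{K}^\pm_{(\tilde\chi^{h_n})}$ is exactly the argument the paper intends, so no genuine gap here.
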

\begin{proof}
Proof of the existence of $(\tilde{\chi}^{h_n})_n$ follows \cite[Lemma 3.2]{BaBa09}. 
Construction is performed in such a way that $|\{\chi^{h_n} \neq \tilde{\chi}^{h_n} \}| \to 0$. 
In order to prove that the limit energy density is the same, it is enough to see that for every 
sequence $(\vect \psi^{h_n})_n \subset H^1(\Omega, \R^3)$, such that 
$(|\sym\nabla_{h_{n}}\vect{\psi}^{h_n}|^2)_n$ is equi-integrable, we have
\begin{eqnarray*}
&  & \left|\int_{\Omega} Q^{\chi^{h_n}}(x,\imath(\mat M) + \sym\nabla_{h_n}\vect\psi^{h_n})\,\dd x 
- \int_{\Omega}Q^{\tilde{\chi}^{h_n}}(x,\imath(\mat M) + \sym\nabla_{h_n}\vect\psi^{h_n})\,\dd x \right|  
\leq \\ & & \hspace{+35ex} 2 \beta  \int_{\{ \tilde{\chi}^{h_n} \neq \chi^{h_n}\} }|\iota(\mat M) + 
\sym\nabla_{h_{n}}\vect{\psi}^{h_n}|^2 \dd x  \to 0\,,\qquad \forall \mat M \in \set S(\omega)\,. 
\end{eqnarray*}
\end{proof}
\begin{definition}
For a sequence $(h_n)_n$ monotonically decreasing to zero, $\theta \in[0,1]^N$ 
such that $\sum_{i=1}^N\theta_i=1$ and $\Omega = \omega\times I$ with $\omega\subset\R^2$, 
we define $\set G_{\theta}^{(h_n)}(\omega)$ as the set of all homogeneous quadratic forms
$Q:\R_{\sym}^{2\times2}\times\R_{\sym}^{2\times2}\to\R$ which are limit energy densities of a
mixture $(\chi^{h_n})_n\subset\set X^N(\Omega)$ satisfying:  
$\int_\Omega \chi^{h_n}_i(x)\dd x  = \theta_i$ and 
$\chi^{h_n}\overset{*}{\rightharpoonup}{\mu}$
in $L^\infty(\Omega,[0,1]^N)$ for some $\mu \in L^\infty(I,[0,1]^N)$ such that 
$\int_I\mu_i(x_3)\dd x_3 = \theta_i$, for all $i=1,\ldots,N$.
\end{definition} 

\begin{remark}
Note that $\set G_{\theta}^{(h_n)}(\omega)$ a priori depends on the sequence $(h_n)_n$, and at this point it is not clear 
whether it is independent of $(h_n)$. We will prove that in the sequel. 
\end{remark}

\begin{proposition}\label{4.prop:dilatation}
Let $\Omega = \omega\times I$ and $\tilde\Omega = \tilde\omega\times I$, where $\omega,\,\tilde\omega\subset\R^2$ open, bounded set having Lipschitz boundary
and $s>0$ such that
$x_0'+s\tilde{\omega}\subset\omega$ for some $x_0'\in\omega$. For any $(h_n)_n$, $h_n\downarrow0$, 
and $\theta\in [0,1]^N$ we have
\begin{equation}
\set G_{\theta}^{(h_n)}(\omega) \subseteq \set G_{\theta}^{(h_n/s)}(\tilde\omega)\,.
\end{equation}
\end{proposition}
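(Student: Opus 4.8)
The plan is to transport a generating sequence for $Q$ from $\Omega$ to $\tilde\Omega$ by an in-plane dilation, simultaneously rescaling the thickness parameter from $h_n$ to $h_n/s$, and then to check that this changes neither the (in-plane-constant) weak-$*$ limit of the mixture nor its limit energy density. First I would unpack the hypothesis: given $Q\in\set G_\theta^{(h_n)}(\omega)$, fix a mixture $(\chi^{h_n})_n\subset\set X^N(\Omega)$ with $\chi^{h_n}\overset{*}{\rightharpoonup}\mu$ for some $\mu\in L^\infty(I,[0,1]^N)$ satisfying $\int_I\mu_i\,\dd x_3=\theta_i$ and having limit energy density $Q$, and define the dilated mixture $\tilde\chi^{h_n/s}(z',z_3):=\chi^{h_n}(x_0'+sz',z_3)$ on $\tilde\Omega$; this is legitimate because $x_0'+s\tilde\omega\subset\omega$, and $(h_n/s)_n$ still decreases monotonically to zero. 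Testing against an arbitrary $\phi\in L^1(\tilde\Omega)$ and substituting $x'=x_0'+sz'$ shows $\tilde\chi^{h_n/s}\overset{*}{\rightharpoonup}\tilde\mu$, where $\tilde\mu(z',z_3)=\mu(z_3)$ is again independent of the in-plane variable and satisfies $\int_I\tilde\mu_i\,\dd x_3=\theta_i$.

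The core of the argument is the behaviour of the energy functionals under this dilation. Writing $z=\big((x'-x_0')/s,x_3\big)$ and $\psi(x):=s\,\tilde\psi(z)$, a direct computation gives $\nabla_{h_n}\psi(x)=\nabla_{h_n/s}\tilde\psi(z)$ (the in-plane derivatives are unchanged since the factor $s$ cancels the chain rule, while $\tfrac1{h_n}\pa_3\psi=\tfrac{s}{h_n}\pa_3\tilde\psi=\tfrac1{h_n/s}\pa_3\tilde\psi$), and, since the constituent densities $Q_i$ do not depend on the spatial variable, $Q^{\chi^{h_n}}(x,\cdot)=Q^{\tilde\chi^{h_n/s}}(z,\cdot)$. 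Moreover $\psi\leftrightarrow\tilde\psi$ is a linear homeomorphism between test fields on $(x_0'+sB)\times I$ vanishing on the boundary and test fields on $B\times I$ vanishing on the boundary, under which the smallness constraint $(\psi_1,\psi_2,h_n\psi_3)\in\set U$ corresponds to $(\tilde\psi_1,\tilde\psi_2,(h_n/s)\tilde\psi_3)\in\set U'$ with $\set U\mapsto\set U'$ a bijection of $\set N(0)$. Plugging this into the topological characterisation of $\Kchi$ through $\widetilde{\set K}_{\chi^{h_n}}^0$ recalled at the start of Section \ref{sec:4} (see \eqref{4.def:Kchi0}) and using $\dd x=s^2\,\dd z$, I would obtain, for every ball $B$ with $x_0'+sB\subset\omega$ and every constant $\mat M=\mat M_1+x_3\mat M_2$,
\begin{equation*}
\Kchi(\mat M,x_0'+sB)=s^2\,\set K_{(\tilde\chi^{h_n/s})}(\mat M,B),
\end{equation*}
and the same identity for the $\liminf$- and $\limsup$-variants separately, so that the coincidence of the lower and upper functionals (hence the existence of a limit energy density) transfers from $(\chi^{h_n})$ to $(\tilde\chi^{h_n/s})$.

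With this identity in hand I would identify the limit energy density $\tilde Q$ of $(\tilde\chi^{h_n/s})$: applying \eqref{bukal1b} on $\tilde\omega$, the displayed scaling with $B=B(z_0',r)$, and $|B(x_0'+sz_0',sr)|=s^2|B(z_0',r)|$, for a.e.~$z_0'\in\tilde\omega$ one gets
\begin{equation*}
\tilde Q(z_0',\mat M_1,\mat M_2)=\lim_{\rho\downarrow0}\frac{\Kchi(\mat M_1+x_3\mat M_2,B(x_0'+sz_0',\rho))}{|B(x_0'+sz_0',\rho)|}=Q(x_0'+sz_0',\mat M_1,\mat M_2)=Q(\mat M_1,\mat M_2),
\end{equation*}
the last equality because $Q\in\set G_\theta^{(h_n)}(\omega)$ is homogeneous and the affine map $z_0'\mapsto x_0'+sz_0'$ sends null sets to null sets; hence $\tilde Q=Q$. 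It remains to restore the prescribed volume fractions: applying Lemma \ref{4.lem:fvfrac} to $(\tilde\chi^{h_n/s})$ on $\tilde\Omega$ produces a sequence with the same weak-$*$ limit $\tilde\mu$, the same limit density $Q$, and with $\int_{\tilde\Omega}\tilde\chi_i^{h_n/s}\,\dd x$ fixed at the value required in the definition of $\set G_\theta^{(h_n/s)}(\tilde\omega)$ (consistently, since $\int_I\tilde\mu_i\,\dd x_3=\theta_i$). This gives $Q\in\set G_\theta^{(h_n/s)}(\tilde\omega)$ and proves the inclusion.

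I expect the only genuinely delicate point to be the scaling bookkeeping in the second step: verifying that the combined in-plane dilation by $s$ and thickness rescaling turns $\nabla_{h_n}$ into \emph{exactly} $\nabla_{h_n/s}$, and that the two admissibility constraints --- vanishing boundary data on the dilated ball and strong $L^2$-smallness of $(\psi_1,\psi_2,h_n\psi_3)$ --- are matched under the substitution $\psi(x)=s\,\tilde\psi\big((x'-x_0')/s,x_3\big)$. Everything else reduces to a change of variables combined with already established facts: \eqref{trivijala}, the integral representation \eqref{bukal1b}, the topological characterisation of $\Kchi$, and Lemma \ref{4.lem:fvfrac}.
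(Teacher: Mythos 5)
Your proposal is correct and follows essentially the same route as the paper: define the dilated mixture $\tilde\chi^{h_n/s}(z',z_3)=\chi^{h_n}(x_0'+sz',z_3)$, verify the weak-$*$ limit is unchanged, perform the change of variables $\vect\psi(x)=s\,\tilde{\vect\psi}\big((x'-x_0')/s,x_3\big)$ so that $\nabla_{h_n}\vect\psi=\nabla_{h_n/s}\tilde{\vect\psi}$ and the two admissibility constraints match, read off the identity between the variational functionals, and finish with Lemma \ref{4.lem:fvfrac}. The only cosmetic difference is that you identify the new limit density pointwise via \eqref{bukal1b} and Lebesgue points, while the paper argues over the whole domain $\tilde\omega$ using the homogeneity of $Q$ — both are fine and amount to the same thing.
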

\begin{proof}
Let $Q\in \set G_{\theta}^{(h_n)}(\omega)$ be a homogeneous limit energy density of the sequence 
of mixtures $(\chi^{h_n})_n$ of fixed volume fractions $\theta_i$ satisfying 
$\chi^{h_n}_i\overset{*}{\rightharpoonup}\mu_i$ for some $\mu \in L^\infty(I,[0,1]^N)$ 
such that $\int_I\mu_i(x_3)\dd x_3 = \theta_i$, for $i=1,\ldots,N$. 
Define a new sequence of mixtures $(\tilde{\chi}^{h_n'})_n\subset\set X^N(\tilde{\Omega})$, 
depending on the scaled sequence of thickness $h_n' = h_n/s$, $n\in\N$, by
\begin{equation*}
\tilde{\chi}^{h_n'}(y',y_3) = \chi^{h_n}(x_0' + sy',y_3)\,,\quad y\in\tilde{\Omega}\,.
\end{equation*}
For arbitrary $\phi\in L^1(\tilde{\Omega})$, one easily concludes that 
$\int_{\tilde{\Omega}}\tilde{\chi}_i^{h_n'}(y)\phi(y)\dd y \to \int_{\tilde{\Omega}}\mu_i(y_3)\phi(y)\dd y$,
as $n\to\infty$, hence $\tilde\chi^{h_n'}\overset{*}{\rightharpoonup}{\mu}$ 
in $L^\infty(\tilde\Omega,[0,1]^N)$.
Using the homogeneity of $Q$ and the definition of $\Kchi$, for arbitrary 
$\mat M_1,\mat M_2\in \R^{2\times2}_{\sym}$, we have
\begin{align*}
Q(\mat M_1,\mat M_2) &= \frac{1}{s^2|\tilde\omega|}\Kchit(\mat M_1 + x_3\mat M_2, x_0' + s\tilde{\omega})\\
& = \frac{1}{s^2|\tilde\omega|}\lim_{r\to0}\liminf_{n\to\infty}
\inf_{\substack{\vect \psi\in H^1((x_0'+s\tilde\omega)\times I,\R^3),\\ (\psi_1,\psi_2,h_n\psi_3)\in B(r,r)}} 
\int_{(x_0'+s\tilde\omega)\times I}Q^{\chi^{h_n}}(x, \imath(\mat M) + \nabla_{h_n}\vect\psi(x))\dd x \\
&= \frac{1}{s^2|\tilde\omega|}\lim_{r\to0}\limsup_{n\to\infty}
\inf_{\substack{\vect \psi\in H^1((x_0'+s\tilde\omega)\times I,\R^3),\\ (\psi_1,\psi_2,h_n\psi_3)\in B(r,r)}} 
\int_{(x_0'+s\tilde\omega)\times I}Q^{\chi^{h_n}}(x, \imath(\mat M) + \nabla_{h_n}\vect\psi(x))\dd x\,. 
\end{align*} 
Next, for $\vect\psi\in H^1(\Omega,\R^3)$ define $\tilde{\vect{\psi}}(y',y_3) = \frac{1}{s}\vect{\psi}(x_0'+sy',y_3)$ and
note that $\nabla_{h_n}\vect\psi = \nabla_{h_n'}\tilde{\vect{\psi}}$.
Changing variables in the above integral yields
\begin{align*}
Q(\mat M_1,\mat M_2) &= \frac{1}{|\tilde\omega|}\lim_{r\to 0}\liminf_{n\to\infty}
\inf_{\substack{\tilde{\vect\psi}\in H^1(\tilde\omega\times I,\R^3),\\ 
(\tilde\psi_1,\tilde\psi_2,h_n'\tilde\psi_3)\in B(r/s,r/s^2)}} 
\int_{\tilde\omega\times I}Q^{\tilde\chi^{h_n'}}(y, \imath(\mat M) + \nabla_{h_n'}\tilde{\vect{\psi}}(y))\dd y\\
&= \frac{1}{|\tilde\omega|}\lim_{r\to0}\limsup_{n\to\infty}
\inf_{\substack{\tilde{\vect\psi}\in H^1(\tilde\omega\times I,\R^3),\\ 
		(\tilde\psi_1,\tilde\psi_2,h_n'\tilde\psi_3)\in B(r/s,r/s^2)}} 
\int_{\tilde\omega\times I}Q^{\tilde\chi^{h_n'}}(y, \imath(\mat M) + \nabla_{h_n'}\tilde{\vect{\psi}}(y))\dd y  
\\
&= \frac{1}{|\tilde\omega|}\mathcal{K}_{(\tilde\chi^{h_n'})}(\mat M_1 + x_3\mat M_2, \tilde{\omega}).
\end{align*}
Hence, $(\tilde{\chi}^{h_n'})_n$ has the same limit energy density $Q$. 
Finally, applying Lemma \ref{4.lem:fvfrac} we infer that $Q\in \set{G}_{\theta}^{(h_n')}(\tilde\omega)$.
\end{proof}


\begin{proposition}\label{4.prop:closure}
Let $\Omega = \omega\times I$ with $\omega \subset\R^2$ open, bounded set having Lipschitz boundary, $(h_n)_n$ a sequence, $h_n\downarrow0$, 
and $\theta\in [0,1]^N$. 
Set $\set{G}_{\theta}^{(h_n)}(\omega)$ is closed.
\end{proposition}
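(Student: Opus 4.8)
The plan is to prove that $\set G_\theta^{(h_n)}(\omega)$ is closed by a diagonalization in the spirit of \cite{BaBa09}, carried out through the localized minimum problems of Lemma \ref{3.14} rather than through the functionals $\Kchi$ directly: the latter are defined by $\liminf/\limsup$ in $n$ and the underlying ``$\Gamma$-convergence'' is not known to be metrizable in the present generality, whereas for a \emph{fixed} mixture the corresponding minimum problems are genuine limits in $n$. Since all forms in play are quadratic forms on the finite-dimensional space $\R^{2\times2}_{\sym}\times\R^{2\times2}_{\sym}$, it suffices to take $Q^{(k)}\in\set G_\theta^{(h_n)}(\omega)$ with $Q^{(k)}\to Q$ pointwise on matrices and to show $Q\in\set G_\theta^{(h_n)}(\omega)$. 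I fix a countable dense family $\{(\mat N_1^j,\mat N_2^j)\}_{j\in\N}$ of pairs of symmetric matrices and a countable family $\set F$ of balls $B\ll\omega$ (say, all balls with rational centre and radius whose closure lies in $\omega$), and for each $k$ I pick a realizing mixture $(\chi^{h_n,k})_n\subset\set X^N(\Omega)$ with $\int_\Omega\chi^{h_n,k}_i\,\dd x=\theta_i$, $\chi^{h_n,k}\overset{*}{\rightharpoonup}\mu^k\in L^\infty(I,[0,1]^N)$, $\int_I\mu^k_i\,\dd x_3=\theta_i$, and limit energy density $Q^{(k)}$.

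First I would pass to a subsequence in $k$ (without relabelling, since $Q^{(k)}\to Q$ is preserved) so that $\mu^k\overset{*}{\rightharpoonup}\mu$ in $L^\infty(\Omega,[0,1]^N)$; then $\mu$ again depends only on $x_3$ and, passing to the limit in $\int_I\mu^k_i\,\dd x_3=\theta_i$, one has $\int_I\mu_i\,\dd x_3=\theta_i$. For fixed $k$, Lemma \ref{lemma:equi-int1} and Lemma \ref{3.14}, applied to the densities $Q^{\chi^{h_n,k}}$ (which obey \eqref{2.eq:A1} with the same $\alpha,\beta$), show that for every $j$ and $B\in\set F$ the minimum $a^k_n(j,B)$ of $\int_{B\times I}Q^{\chi^{h_n,k}}(x,\imath(\mat N_1^j+x_3\mat N_2^j)+\nabla_{h_n}\vect\psi)\,\dd x$ over $\vect\psi\in H^1(B\times I,\R^3)$ with $\vect\psi=0$ on $\pa B\times I$ converges, as $n\to\infty$, to $\Lambda^k_B(j):=\min\{\int_B Q^{(k)}(\mat N_1^j+\sym\nabla\vect w,\mat N_2^j-\nabla^2 v)\,\dd x' : \vect w\in H^1_0(B,\R^2),\ v\in H^2_0(B)\}$; moreover, by Lemma \ref{3.lem:minimizers} applied with $\set N(B)=H^1_0(B,\R^2)\times H^2_0(B)$, the pointwise convergence $Q^{(k)}\to Q$ forces $\Lambda^k_B(j)\to\Lambda_B(j)$ as $k\to\infty$, where $\Lambda_B(j)$ is obtained from $\Lambda^k_B(j)$ by replacing $Q^{(k)}$ with $Q$. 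Using the metrizability of weak-$*$ convergence on the bounded set $\{\chi\in L^\infty(\Omega):0\le\chi\le1\}$ together with a single diagonal extraction over the countably many data $(j,B)$, I obtain a non-decreasing $k(n)\to\infty$ such that, setting $\chi^{h_n}:=\chi^{h_n,k(n)}$, we have $\chi^{h_n}\overset{*}{\rightharpoonup}\mu$ and $a^{k(n)}_n(j,B)\to\Lambda_B(j)$ for all $j$ and all $B\in\set F$; the volume fractions of $\chi^{h_n}$ are still $\theta_i$.

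It remains to identify the limit energy density of $(\chi^{h_n})_n$ with $Q$. Extract a subsequence $(h_{n_l})$ along which Lemma \ref{lemma:equi-int1} and Proposition \ref{prop:Int_form} apply to the densities $Q^{\chi^{h_{n_l}}}$, so that $\Kchim=\Kchip=:\Kchi$ admits the integral representation $\hat Q$. Lemma \ref{3.14} along $(h_{n_l})$ gives, for every $j$ and $B\in\set F$, that $\min\{\int_B\hat Q(x',\mat N_1^j+\sym\nabla\vect w,\mat N_2^j-\nabla^2 v)\,\dd x' : \vect w\in H^1_0(B,\R^2),\ v\in H^2_0(B)\}=\lim_l a^{k(n_l)}_{n_l}(j,B)=\Lambda_B(j)$, i.e.\ this minimum is unchanged if $\hat Q$ is replaced by $Q$; by density of $\set F$ and continuity of the minimum with respect to the ball, the identity extends to every $B=B(x_0',r)\ll\omega$. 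Rescaling the competitors by $\tilde{\vect w}(z')=\tfrac1r\vect w(x_0'+rz')$ and $\tilde v(z')=\tfrac1{r^2}v(x_0'+rz')$, the $Q$-side becomes $|B(0,1)|\,Q(\mat N_1^j,\mat N_2^j)$ for every $r$ (by Jensen's inequality for the convex form $Q$, the zero boundary values forcing the mean gradients to vanish, so $\vect w=0$, $v=0$ are optimal), while the $\hat Q$-side becomes $\min\{\int_{B(0,1)}\hat Q(x_0'+rz',\mat N_1^j+\sym\nabla\vect w,\mat N_2^j-\nabla^2 v)\,\dd z'\}$, which by Lemma \ref{3.cor:minimizers} (followed by Jensen for the frozen form $\hat Q(x_0',\cdot,\cdot)$) tends as $r\downarrow0$ to $|B(0,1)|\,\hat Q(x_0',\mat N_1^j,\mat N_2^j)$ for a.e.\ $x_0'$. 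Hence $\hat Q(x_0',\mat N_1^j,\mat N_2^j)=Q(\mat N_1^j,\mat N_2^j)$ for a.e.\ $x_0'$ and all $j$, and by density of $\{(\mat N_1^j,\mat N_2^j)\}$ together with the bounds \eqref{bbbbbbbb} we conclude $\hat Q(x_0',\cdot,\cdot)=Q$ for a.e.\ $x_0'$. Thus $(\chi^{h_n})_n$ is a mixture of the required type with limit energy density $Q$, and therefore $Q\in\set G_\theta^{(h_n)}(\omega)$.

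The main obstacle is this diagonalization: because $\Kchi$ is not accessible as a limit in $n$ and the relevant ``$\Gamma$-convergence'' is not known to be metrizable here, one cannot diagonalize the functionals $\Kchi$ directly --- one has to descend to the genuine $n$-limits of Lemma \ref{3.14}, combine in a single extraction the convergence of countably many minimum problems with the weak-$*$ convergence of the mixtures $\chi^{h_n,k}$, and then reconstruct the diagonal density $\hat Q$ a posteriori (through Lemma \ref{3.14}, the blow-up $r\downarrow0$ and Lemma \ref{3.cor:minimizers}) and check that it is independent of $x_0'$ and equal to $Q$ regardless of the extracted subsequence $(h_{n_l})$.
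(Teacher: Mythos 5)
Your proof is correct but follows a genuinely different route from the paper, and the comparison is worth spelling out. The paper diagonalizes directly on the \emph{constrained} quantities $\widetilde{\set K}^0_{\chi^{h_n,m}}(\mat M_{1,j}+x_3\mat M_{2,j},D_k,B(r_l))$, sandwiching them by proving $\limsup_m\limsup_n\leq|D_k|Q\leq\liminf_m\liminf_n$ (upper bound from homogeneity and the definition of $\widetilde{\set K}^0$; lower bound from the \emph{second} statement of Lemma~\ref{3.14} together with convexity and Jensen), building $g_{n,m}$ with the triple-indexed family $(j,k,l)$ plus the weak-$*$ terms, extracting $m(n)$, and then reading off the limit energy density of the diagonal mixture directly from the identity $\sup_l\lim_n\widetilde{\set K}^0(\cdot,D_k,B(r_l))=\Kchit(\cdot,D_k)$. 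You instead diagonalize on the \emph{unconstrained} Dirichlet minima $a^k_n(j,B)$, which by the \emph{first} statement of Lemma~\ref{3.14} are genuine $n$-limits for each fixed $k$; you then invoke Lemma~\ref{3.lem:minimizers} for the $k$-limit $\Lambda^k_B\to\Lambda_B$, perform the diagonal extraction (over the two-indexed family $(j,B)$, without the extra $r_l$ index), and reconstruct the density of the diagonal mixture \emph{a posteriori} through a blow-up at Lebesgue points combining Lemma~\ref{3.14}, Lemma~\ref{3.cor:minimizers}, and Jensen for the convex homogeneous form $Q$. What your route buys is that the diagonal is a limit of genuine $n$-limits (no $\liminf$/$\limsup$ mismatch to manage inside the diagonalization), at the price of having to reconstruct $\hat Q$ by blow-up; what the paper's route buys is that it reads $Q$ off the diagonal sequence directly and does not need Lemma~\ref{3.lem:minimizers} or the blow-up step. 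One point worth flagging: after the diagonal you pass to a subsequence $(h_{n_l})$ to extract the density $\hat Q$ and then argue $\hat Q=Q$ regardless of that subsequence; the paper is at pains to observe (``we do not need to pass on a subsequence with respect to $n$'') that its diagonal gives genuine $n$-limits of $\widetilde{\set K}^0$ along the full sequence, which directly forces $\Kchim=\Kchip$ along the full sequence through the localization identity of Remark~\ref{2.rem:top}(d). Your argument reaches the same conclusion but more indirectly: you should state explicitly that, since the genuine limit $\lim_n a^{k(n)}_n(j,B)=\Lambda_B(j)$ holds along the full sequence and every subsequence along which Assumption~\ref{2:ass} holds yields the same density $\hat Q=Q$, the identification carries over to the full sequence (otherwise one only obtains $\Kchim\leq\int_A Q\leq\Kchip$ from the monotonicity of $\Kchipm$ under passing to subsequences, which by itself does not close the argument).
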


\begin{proof}
Let $(Q_m)_m\subset \set{G}_{\theta}^{(h_n)}(\omega)$
be a sequence of homogeneous quadratic forms, $Q$ its limit, 
and $(\chi^{h_n,m})_n\subset \set X^N(\Omega)$ respective sequences of 
mixtures with limit energy densities $Q_m$.
Take an arbitrary $\mat M_1,\mat M_2\in \R^{2\times2}_{\sym}$ and $A\subset\omega$ 
open subset with Lipschitz boundary. By homogeneity of $Q_m$ and definition of 
$\widetilde{\mathcal{K}}_{\chi^{h_n,m}}^0$ (cf.~\ref{4.def:Kchi0}), for arbitrary $r>0$ we conclude 

\begin{equation*}
\limsup_{n\to\infty}\widetilde{\mathcal{K}}_{\chi^{h_n,m}}^0(\mat M_1 + x_3\mat M_2, A, B(r)) \leq 
|A|Q_m(\mat M_1,\mat M_2)\,,
\end{equation*}
where $B(r)$ denotes the ball of radius $r>0$ around the origin in the strong $L^2$-topology.
Therefore,
\begin{equation}\label{4.eq:limlimsup}
\limsup_{m\to\infty}\limsup_{n\to\infty}\widetilde{\mathcal{K}}_{\chi^{h_n,m}}^0(\mat M_1 + x_3\mat M_2, A, B(r)) \leq
|A|Q(\mat M_1,\mat M_2)\,.
\end{equation}
Applying Lemma \ref{3.14} and using homogeneity and convexity of quadratic forms $Q_m$, for arbitrary $r>0$ we
obtain
\begin{align*}
\liminf_{n\to\infty}\widetilde{\mathcal{K}}_{\chi^{h_n,m}}^0(\mat M_1 + x_3\mat M_2, A, B(r)) & \geq 
\min_{\substack{\vect w\in H_0^1(A,\R^2), \\ v\in H_0^2(A)}}
\int_A Q_m(\mat M_1 + \sym\nabla'\vect w,\mat M_2 - \nabla'^2v)\dd x'\\
&\geq |A|Q_m(\mat M_1,\mat M_2)\,,
\end{align*}
which implies
\begin{equation}\label{4.eq:limliminf}
\liminf_{m\to\infty}\liminf_{n\to\infty}\widetilde{\mathcal{K}}_{\chi^{h_n,m}}^0(\mat M_1 + x_3\mat M_2, A, B(r)) \geq
|A|Q(\mat M_1,\mat M_2)\,.
\end{equation}


By assumption there exists a sequence $(\mu^m)_m\subset L^\infty(I,[0,1]^N)$ such that
$\chi_i^{h_n,m}\overset{*}{\rightharpoonup}\mu_i^m$ for every $i=1,\ldots,N$ and $m\in\N$.
According to the Banach-Alaoglu theorem, there exists a subsequence still denoted by $(\mu^m)_m$ which 
weakly* converges to some $\mu\in L^\infty(I,[0,1]^N)$.
Next we take countable dense families: $(D_k)_{k}$ of open Lipschitz subsets in $\omega$; 
$(\mat M_{1,j},\mat M_{2,j})_j\subset \R_{\sym}^{2\times 2}\times \R_{\sym}^{2\times 2}$;
$(\phi_k)_k\subset L^1(\Omega)$; monotonically decreasing to zero sequence $(r_l)_l$; and
define doubly indexed family $(g_{n,m})_{n,m\in\N}$ by
\begin{align*}
g_{n,m} & = \sum_{k=1}^\infty 2^{-k}\min\left\{\max_{i=1,\ldots,N}\left|
\int_{\Omega}\left (\chi_i^{h_n,m}(x)-\mu_i^m(x_3)\right)\phi_k(x)\dd x\right|\,, 1\right\}\\
& \quad + \sum_{j,k,l=1}^\infty 2^{-j-k-l}\min\left\{\left|\frac{1}{|D_k|}
\widetilde{\mathcal{K}}_{\chi^{h_n,m}}^0(\mat M_{1,j} + x_3\mat M_{2,j}, D_k, B(r_l)) - 
Q(\mat M_{1,j},\mat M_{2,j})\right|\,, 1\right\}\,.
\end{align*}
The weak* convergence $\chi_i^{h_n,m}\overset{*}{\rightharpoonup}\mu_i^m$ 
for every $i=1,\ldots,N$ and $m\in\N$,
and bounds (\ref{4.eq:limlimsup}) -- (\ref{4.eq:limliminf}) imply 
$\liminf_{m\to\infty}\liminf_{n\to\infty}g_{n,m} = \limsup_{m\to\infty}\limsup_{n\to\infty}g_{n,m} = 0$.  Utilizing 
the standard diagonalization procedure (cf.~\cite[Lemma 1.15 and Corollary 1.16]{Att84}), there exists an increasing
function $n\mapsto m(n)$ such that
\begin{equation*}
\lim_{n\to\infty}g_{n,m(n)} = 0\,. 
\end{equation*}
Therefore, using $\mu^m\overset{*}{\rightharpoonup}\mu$, we obtain
\begin{align}
&\lim_{n\to\infty}\int_{\Omega}\left(\chi_i^{h_n,m(n)}(x) -\mu_i(x_3)\right)\phi_k(x)\dd x = 0\,, 
\quad \forall k\in\N\,, \ i = 1,\ldots,N\,,\label{4.eq:closure_1}\\
&\lim_{n\to\infty}\left(\frac{1}{|D_k|}
\widetilde{\mathcal{K}}_{\chi^{h_n,m(n)}}^0(\mat M_{1,j} + x_3\mat M_{2,j}, D_k, B(r_l)) - 
Q(\mat M_{1,j},\mat M_{2,j})\right) = 0\,, \quad \forall j,k,l\in\N\,.\label{4.eq:closure_2}
\end{align}
It is important to note here that in the diagonalization procedure we do not need to pass on a subsequence 
with respect to $n$.
Defining $\tilde\chi^{h_n} := \chi^{h_n,m(n)}$, $n\in\N$, we conclude from (\ref{4.eq:closure_1}), 
using the density argument, that
$\tilde\chi^{h_n} \overset{*}{\rightharpoonup}\mu$ in $L^\infty(\Omega,[0,1]^N)$. 
Using (\ref{4.eq:closure_2}) and 
the fact that
\begin{equation*}
\lim_{l\to\infty}\lim_{n\to\infty}\widetilde{\mathcal{K}}_{\tilde\chi^{h_n}}^0(\mat M_{1,j} + x_3\mat M_{2,j}, D_k, B(r_l)) = 
\mathcal{K}_{(\tilde\chi^{h_n})}(\mat M_{1,j} + x_3\mat M_{2,j}, D_k)\,,
\end{equation*}
again by the density argument for sets we conclude that $Q$ is the limit energy density of the mixture 
$(\tilde\chi^{h_n})_n\subset \set{X}^N(\Omega)$.
\end{proof}


\begin{proposition}\label{4.prop:geqp}
For every $(h_n)_n$ monotonically decreasing to zero we have  $\set G_{\theta}^{(h_n)}([0,1]^2) = \set P_{\theta}$. 
\end{proposition}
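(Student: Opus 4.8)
The plan is to establish the two inclusions separately: $\set P_\theta\subseteq\set G_\theta^{(h_n)}([0,1]^2)$ is the elementary one, while $\set G_\theta^{(h_n)}([0,1]^2)\subseteq\set P_\theta$ carries the real content.

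For $\set P_\theta\subseteq\set G_\theta^{(h_n)}([0,1]^2)$ it suffices, by Proposition \ref{4.prop:closure} (closedness of $\set G_\theta^{(h_n)}([0,1]^2)$) and the fact that the cases $\gamma\in\{0,\infty\}$ arise as pointwise limits of $Q_\gamma$, $\gamma\in(0,\infty)$, to check $\set Q_\theta\subseteq\set G_\theta^{(h_n)}([0,1]^2)$. Given $\gamma\in(0,\infty)$ and a partition $\{A_i\}$ of $\T\times I$ with $|A_i|=\theta_i$, set $\chi=(\mathbbm{1}_{A_1},\dots,\mathbbm{1}_{A_N})$, $k_n=\lceil\gamma/h_n\rceil$, $\eps_n=1/k_n$, and $\chi^{h_n}(x',x_3)=\chi(x'/\eps_n,x_3)$ extended $\eps_n$-periodically in $x'$; then $\eps_n$ tiles $[0,1]^2$, $h_n/\eps_n\to\gamma$, and by \cite{NeVe13} the sequence $(\chi^{h_n})_n$ has limit energy density $Q_\gamma$ from \eqref{int:def.qhom}, with $\chi^{h_n}\overset{*}{\rightharpoonup}\mu$, where $\mu_i(x_3)=\int_{\T}\mathbbm{1}_{A_i}(y',x_3)\,\dd y'$ depends only on $x_3$ and $\int_I\mu_i\,\dd x_3=\theta_i$, and $\int_\Omega\chi^{h_n}_i\,\dd x=\theta_i$ for every $n$ (if necessary after the measure-zero modification of Lemma \ref{4.lem:fvfrac}). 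Hence $Q_\gamma\in\set G_\theta^{(h_n)}([0,1]^2)$, so $\set P_\theta=\overline{\set Q_\theta}\subseteq\set G_\theta^{(h_n)}([0,1]^2)$.

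For the converse, let $Q\in\set G_\theta^{(h_n)}([0,1]^2)$ be the limit energy density of $(\chi^{h_n})_n\subset\set X^N([0,1]^2\times I)$ with $\int_\Omega\chi^{h_n}_i=\theta_i$. Since $Q$ is independent of $x'$ we have $Q^0=Q$ in Proposition \ref{prop:Int_form}, and Lemma \ref{3.14} together with the convexity and homogeneity of $Q$ (so that the optimal in-plane fields on the right-hand side vanish) gives, for all $\mat M_1,\mat M_2\in\R^{2\times2}_{\sym}$,
\begin{equation}\label{plan:cellformula}
Q(\mat M_1,\mat M_2)=\lim_{n\to\infty}\ \min_{\substack{\vect\psi\in H^1([0,1]^2\times I,\R^3)\\ \vect\psi=0\text{ on }\partial([0,1]^2)\times I}}\ \int_{[0,1]^2\times I}Q^{\chi^{h_n}}(x,\imath(\mat M_1+x_3\mat M_2)+\nabla_{h_n}\vect\psi)\,\dd x\,.
\end{equation}
For each $n$ view $\chi^{h_n}|_{[0,1)^2\times I}$ as an element $\bar\chi^{(n)}\in\set X^N(\T\times I)$ with volume fractions $\theta$; scaling this cell to period $\eps\downarrow0$ with thickness-to-period ratio kept equal to $h_n$ and applying \cite{NeVe13}, the homogenized density is the cell formula \eqref{int:def.qhom} with $\gamma=h_n$ and partition induced by $\bar\chi^{(n)}$ — call it $Q^{(n)}$, so that $Q^{(n)}\in\set Q_\theta$ (here $h_n\in(0,\infty)$ and $|A_i^{(n)}|=\theta_i$). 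It then suffices to prove $Q^{(n)}\to Q$: a uniformly bounded coercive quadratic form is determined by finitely many evaluations, so this forces $Q\in\overline{\set Q_\theta}=\set P_\theta$. The choice $\gamma=h_n$ is what makes $\nabla_{h_n}=\nabla_\gamma$, i.e.~the periodic cell formula for $Q^{(n)}$ and the Dirichlet cell problem in \eqref{plan:cellformula} have the \emph{same} integrand, differing only in the boundary condition on $\partial([0,1]^2)\times I$.

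The bound $\limsup_n Q^{(n)}(\mat M_1,\mat M_2)\le Q(\mat M_1,\mat M_2)$ is immediate, since any $\vect\psi$ admissible in \eqref{plan:cellformula} extends $1$-periodically to a competitor for $Q^{(n)}$ with the same energy. The matching lower bound is the crux: one takes a near-minimizer $\vect\psi^{(n)}$ of the periodic cell problem, passes to a larger period cell $[0,k)^2$ (which leaves $Q^{(n)}$ unchanged), uses the Griso decomposition (Lemma \ref{app:lem.limsup}) and a thin-domain Korn inequality (Corollary \ref{kornthincor}), together with an equi-integrable replacement of $\nabla_{h_n}\vect\psi^{(n)}$ as in Lemma \ref{lemma:equi-int1}, to control $(\psi^{(n)}_1,\psi^{(n)}_2,h_n\psi^{(n)}_3)$ in $L^2$ modulo rigid motions, and then truncates $\vect\psi^{(n)}$ in a boundary collar of relative width $\sim1/k$ to obtain an admissible field for \eqref{plan:cellformula}; letting $n\to\infty$ and then $k\to\infty$ yields $\liminf_n Q^{(n)}(\mat M_1,\mat M_2)\ge Q(\mat M_1,\mat M_2)$. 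I expect this boundary-layer step to be the main obstacle: converting a periodic corrector into one vanishing on the cell boundary with no asymptotic energy loss is routine in ordinary homogenization, but here it is obstructed by the degenerate scaled gradient $\nabla_{h_n}$ (the vanishing ratio $\gamma=h_n$) — this is the manifestation of the ``non-standard $\Gamma$-convergence'' mentioned in the introduction — so the $L^2$-control of the corrector must be extracted from the Griso decomposition and a thin-domain Korn inequality rather than from a naive cutoff, and the volume fractions must be kept fixed throughout via Lemma \ref{4.lem:fvfrac}.
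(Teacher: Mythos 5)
Your overall strategy is the one the paper uses: $\set Q_\theta\subseteq\set G_\theta^{(h_n)}([0,1]^2)$ plus closedness give $\set P_\theta\subseteq\set G_\theta^{(h_n)}([0,1]^2)$, and for the converse you feed the given mixture $\chi^{h_n}$ back into the periodic cell formula \eqref{int:def.qhom} with $\gamma=h_n$ (so that $\nabla_\gamma=\nabla_{h_n}$), obtaining $Q^{(n)}\in\set Q_\theta$, and try to prove $Q^{(n)}\to Q$. That key observation is exactly the paper's middle display. Where you diverge is how you prove $Q^{(n)}\to Q$: you work with the \emph{Dirichlet} cell statement of Lemma \ref{3.14} (your \eqref{plan:cellformula}) and attempt to pass from the periodic minimizer to a Dirichlet competitor by an enlarged-cell cut-off, whereas the paper simply asserts the periodic analogue of Lemma \ref{3.14} and evaluates the resulting periodic minimum by convexity/homogeneity (Jensen, $\int_\T\sym\nabla'\vect w=0$, $\int_\T\nabla'^2v=0$).

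The gap is in your lower bound. You extend the periodic near-minimizer $\vect\psi^{(n)}$ to $[0,k)^2\times I$ and truncate in a boundary collar, aiming to produce a competitor for \eqref{plan:cellformula}, which lives on $[0,1]^2\times I$. To bring $[0,k)^2$ back to $[0,1]^2$ you must rescale in-plane by $1/k$; under that rescaling the scaled gradient $\nabla_{h_n}$ becomes $\nabla_{h_n/k}$ and the integrand becomes the $k$-fold periodic tiling of $Q^{\chi^{h_n}}|_{[0,1)^2}$ squeezed to period $1/k$. Thus the truncated function is admissible for a \emph{different} Dirichlet problem than \eqref{plan:cellformula}: different thickness parameter, different (and non-original) mixture, whose limit energy density is not the given $Q$. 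So the comparison $\liminf_n Q^{(n)}\ge Q$ does not follow from this construction; the boundary-layer trick from classical homogenization, where the large cell $k\to\infty$ absorbs the boundary effect for a \emph{fixed} periodic integrand, is not available here because you are sending $n\to\infty$ with the cell fixed, not the cell to infinity with the integrand fixed. The cleaner route (and the one the paper implicitly takes) is to prove the lower bound for the periodic problem directly: apply the Griso decomposition (Lemma \ref{app:lem.limsup}) to the periodic minimizers (after normalizing out rigid motions), extract periodic weak limits $(\vect w,v)\in H^1(\T,\R^2)\times H^2(\T)$, absorb the fluctuating parts $\sym\nabla'(\vect w^{(n)}-\vect w)$ and $-x_3\nabla'^2(v^{(n)}-v)$ into the corrector — this is legitimate because Rellich compactness gives $\vect w^{(n)}\to\vect w$ and $\nabla'v^{(n)}\to\nabla'v$ strongly in $L^2$, so the resulting corrector sequence still satisfies $(\psi_1,\psi_2,h_n\psi_3)\to 0$ — and then invoke the definition of $\Kh(\mat M_0,[0,1]^2)=\int Q^0$ for the fixed field $\mat M_0=\mat M_1+\sym\nabla'\vect w+x_3(\mat M_2-\nabla'^2v)$, followed by Jensen to conclude $\ge Q(\mat M_1,\mat M_2)$; the matching upper bound uses the recovery correctors of Lemma \ref{lemma:equi-int1} vanishing near $\partial\omega\times I$ (Remark \ref{rempromjena}), which therefore extend to $\T\times I$.
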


\begin{proof}
Obviously by construction $\set{Q}_{\theta} \subset \set{G}_{\theta}^{(h_n)}([0,1]^2)$, 
and by the closure property we have
inclusion $\set{P}_{\theta} \subseteq \set{G}_{\theta}^{(h_n)}([0,1]^2)$.
On the other hand, let $(\chi^{h_n})_n\subset \set X^N([0,1]^2\times I)$ such that
$\int_{[0,1]^2\times I}\chi^{h_n}_i(z)\dd z = \theta_i$ 
for $i=1,\ldots,N$, $\chi^{h_n} \overset{*}{\rightharpoonup}\mu \in L^\infty(I,[0,1]^N)$, 
and $\chi^{h_n}$ has the homogeneous limit energy density $Q\in \set{G}_{\theta}^{(h_n)}([0,1]^2)$. 
Using the convexity of $Q$ and applying Lemma \ref{3.14}, for every $\mat M_1,\mat M_2\in\R_{\sym}^{2\times2}$ we obtain
\begin{align*} 
Q(\mat M_1,\mat M_2) & = 
\min_{\substack{\vect w\in H^1(\T,\R^2),\\ v\in H^2(\T) }}
\int_{\T} Q(\mat M_1 + \sym\nabla'\vect w,\mat M_2 - \nabla'^2v)\dd x'\\
& = \lim_{n\to\infty}\min_{\vect\psi\in H^1(\T\times I,\R^3)}\int_{\T\times I}
Q^{\chi^{h_n}}(x,\imath(\mat M_1 + x_3\mat M_2) + \nabla_{h_n}\vect\psi)\dd x \\
& = \lim_{n\to\infty}\min_{\vect\psi\in H^1(\T\times I,\R^3)}\sum_{i=1}^{N}\int_{A_i^{h_n}}
Q_i^{h_n}(x,\imath(\mat M_1 + x_3\mat M_2) + \nabla_{h_n}\vect\psi)\dd x\,,
\end{align*}
where for each $n\in\N$, $\{A_i^{h_n}\}_{i=1,\ldots,N}$ is the partition of $\T\times I$. 
Thus, $Q\in \set{P}_{\theta}$.
\end{proof}
\begin{remark}
It is not strange that we can approximate the limit energy density by densities of the form $Q_{\gamma}$ with arbitrary small $\gamma$'s 
(see \eqref{int:def.qhom}). Recall, the meaning of the parameter $\gamma$ is $\gamma=\lim_{h \to 0} (h/\varepsilon (h))$, where 
$\varepsilon(h)$ is the period of oscillations. Thus, by taking that period arbitrary large by multiplying it with a natural number, 
we can make $\gamma$ as small as we wish. 
\end{remark}
 
\begin{corollary}
$\set G_{\theta}^{(h_n)}(\omega)$ is independent of the sequence $(h_n)_n$ and the set $\omega$. 
\end{corollary}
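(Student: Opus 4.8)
The plan is to prove the stronger statement that $\set G_\theta^{(h_n)}(\omega) = \set P_\theta$ for every bounded open Lipschitz set $\omega \subset \R^2$ and every sequence $(h_n)_n$ monotonically decreasing to zero. Since the right-hand side is defined without any reference to $(h_n)_n$ or to $\omega$, the corollary follows at once. The argument is a two-sided sandwich combining the scaling Proposition \ref{4.prop:dilatation} with the identification $\set G_\theta^{(g_n)}([0,1]^2) = \set P_\theta$ established in Proposition \ref{4.prop:geqp} (whose proof, in turn, relies on the closure Proposition \ref{4.prop:closure}).

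For the inclusion $\set G_\theta^{(h_n)}(\omega) \subseteq \set P_\theta$ I would first use that $\omega$ is open and nonempty to fit a translated, rescaled square inside it: there exist $x_1' \in \omega$ and $s_1 > 0$ with $x_1' + s_1[0,1]^2 \subset \omega$. Applying Proposition \ref{4.prop:dilatation} with $\omega$ as the larger set, $\tilde\omega = [0,1]^2$, and dilatation parameter $s_1$, one obtains $\set G_\theta^{(h_n)}(\omega) \subseteq \set G_\theta^{(h_n/s_1)}([0,1]^2)$; since $(h_n/s_1)_n$ is again monotone and decreasing to zero, Proposition \ref{4.prop:geqp} identifies the latter set with $\set P_\theta$.

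For the reverse inclusion $\set P_\theta \subseteq \set G_\theta^{(h_n)}(\omega)$ I would use boundedness of $\omega$ to fit a rescaled copy of $\omega$ inside the unit square: there exist $x_2'$ and $s_2 > 0$ with $x_2' + s_2\omega \subset [0,1]^2$. Put $g_n := s_2 h_n$, still monotone and decreasing to zero. Proposition \ref{4.prop:dilatation}, now applied with $[0,1]^2$ as the larger set, $\omega$ as the smaller set $\tilde\omega$, and parameter $s_2$, gives $\set G_\theta^{(g_n)}([0,1]^2) \subseteq \set G_\theta^{(g_n/s_2)}(\omega) = \set G_\theta^{(h_n)}(\omega)$, while Proposition \ref{4.prop:geqp} gives $\set G_\theta^{(g_n)}([0,1]^2) = \set P_\theta$. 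Combining the two inclusions yields $\set G_\theta^{(h_n)}(\omega) = \set P_\theta$, as wanted.

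As for the difficulty: the real work has already been carried out in Propositions \ref{4.prop:dilatation}--\ref{4.prop:geqp}, so no genuine obstacle remains at this stage. The only points requiring a little care are invoking Proposition \ref{4.prop:dilatation} in both directions --- once with $[0,1]^2$ playing the role of the inner domain and once of the outer domain --- and checking that the two rescaled thickness sequences $(h_n/s_1)_n$ and $(s_2 h_n)_n$ stay admissible, i.e.\ monotonically decreasing to zero, which is immediate.
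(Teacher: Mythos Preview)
Your proof is correct and follows essentially the same route as the paper: both arguments sandwich $\set G_\theta^{(h_n)}(\omega)$ between two sets of the form $\set G_\theta^{(\cdot)}([0,1]^2)$ via Proposition~\ref{4.prop:dilatation} applied in each direction, and then invoke Proposition~\ref{4.prop:geqp} to identify the endpoints with $\set P_\theta$. The only cosmetic difference is that you introduce the auxiliary sequence $g_n = s_2 h_n$ explicitly, whereas the paper absorbs this into the notation $h_n/s_1$.
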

\begin{proof}
According to Proposition \ref{4.prop:dilatation}, there exist
$s_1,s_2>0$, such that 
\begin{equation} \label{4.eq:h_nomega}
\set G_{\theta}^{(h_n/s_1)}([0,1]^2) \subseteq \set G_{\theta}^{(h_n)}(\omega) 
\subseteq \set G_{\theta}^{(h_n/s_2)}([0,1]^2)\,.
\end{equation}
Since, according to the previous proposition, $\set G_{\theta}^{(h_n)}([0,1]^2)$ does not depend
on the sequence $(h_n)_n$, we have equalities in (\ref{4.eq:h_nomega}) and the claim follows.
\end{proof}

The following lemma is helpful for proving Theorem \ref{thm:2}. 
We employ the following notation: Let $(\chi^{h_n})_n\subset \set X^N(\Omega)$ be a sequence
of mixtures, $x_0'\in\omega$ and $(s_m)_m$ decreasing to zero sequence of positive numbers such that
$x_0' + s_1[0,1]^2\subset\omega$. Then for each $s_m$, we denote by $\chi^{h_{n,m}}(x',x_3) := \chi^{h_n}(x_0' + s_mx',x_3)$ 
the sequence in $\set X^N([0,1]^2\times I)$, where $h_{n,m} = h_n/s_m$.
Also, we define  $B(r_1,r_2) = \{\vect\psi\in H^1(D\times I,\R^3)\ :\ \|(\psi_1,\psi_2)\|_{L^2}\leq r_1\,, \|\psi_3\|_{L^2}\leq r_2\}$, for some $D \subset [0,1]^2$.

\begin{lemma}
Let $(\chi^{h_n})_n\subset \set X^N(\Omega)$ has the limit energy density $Q$. Then for almost every $x_0'\in\omega$, 
every $D\subset [0,1]^2$ of class $C^{1,1}$, $r>0$, and $\mat M_1,\mat M_2\in \R^{2\times2}_{\sym}$ we have
\begin{align*}
Q(x_0',\mat M_1,\mat M_2) 
& = \liminf_{m\to\infty}\liminf_{n\to\infty}\frac{1}{|D|}\widetilde{\mathcal{K}}_{\chi^{h_{n,m}}}^0(\mat M_1 + x_3\mat M_2, D, B(r,r))\\
& = \limsup_{m\to\infty}\limsup_{n\to\infty}\frac{1}{|D|}\widetilde{\mathcal{K}}_{\chi^{h_{n,m}}}^0(\mat M_1 + x_3\mat M_2, D, B(r,r)).
\end{align*}

\end{lemma}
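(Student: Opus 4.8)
The plan is to deduce the statement by a blow-up (dilation) argument at $x_0'$, combined with the variational machinery already developed. Fix $\mat M_1,\mat M_2\in\R^{2\times2}_{\sym}$, write $\mat M:=\mat M_1+x_3\mat M_2\in\set S([0,1]^2)$, and fix $D\subset[0,1]^2$ of class $C^{1,1}$ and $r>0$; it suffices to prove the two identities for these data ranging over countable dense families and then extend to all of them using the monotonicity of $\widetilde{\mathcal{K}}^0_{\chi^{h_{n,m}}}(\cdot,D,B(r,r))$ in $D$ and $r$ and the continuity estimate \eqref{bukal1}, so from now on they are fixed and $x_0'$ runs over a suitable full-measure set (the intersection of the exceptional sets of the Lebesgue differentiation theorem applied to $x'\mapsto Q(x',\mat M_1,\mat M_2)$ and of Lemma \ref{3.cor:minimizers}). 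First I would record the effect of the dilation: following the change of variables in the proof of Proposition \ref{4.prop:dilatation}, the map $\vect\psi\mapsto s_m\vect\psi((\cdot-x_0')/s_m,\cdot)$ turns $\nabla_{h_{n,m}}$ into $\nabla_{h_n}$ and yields, for every open $A\subset[0,1]^2$ with $|\partial A|=0$ and every $\mat N\in\set S([0,1]^2)$, the identity $\mathcal{K}^{\pm}_{(\chi^{h_{n,m}})}(\mat N,A)=s_m^{-2}\mathcal{K}^{\pm}_{(\chi^{h_n})}(\mat N,x_0'+s_mA)$ and its constrained counterparts. Since the right-hand side has coinciding lower and upper values, $(h_{n,m})_n$ satisfies Assumption \ref{2:ass}; and taking $\mat N=\mat M$ and using the integral representations of Proposition \ref{prop:Int_form} on both sides, together with $\mathcal{K}_{(\chi^{h_n})}(\mat M,x_0'+s_mA)=\int_{x_0'+s_mA}Q(x',\mat M_1,\mat M_2)\,\dd x'$, identifies the limit energy density of $(\chi^{h_{n,m}})_n$ as $Q^{(m)}(z',\cdot,\cdot)=Q(x_0'+s_mz',\cdot,\cdot)$ for a.e.\ $z'$.

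For the upper bound, fix $m$, take a corrector sequence $(\vect\psi^{h_{n,m}})_n$ for $\mat M$ and $(\chi^{h_{n,m}})_n$ from Lemma \ref{lemma:equi-int1} (vanishing near $\partial[0,1]^2\times I$ by Remark \ref{rempromjena}), and truncate it in $D\times I$ near $\partial D\times I$ using Lemma \ref{nulizacija}. The truncated sequence vanishes on $\partial D\times I$, keeps $(\psi_1^{h_{n,m}},\psi_2^{h_{n,m}},h_{n,m}\psi_3^{h_{n,m}})\to0$ in $L^2(D\times I)$ — hence lies in $B(r,r)$ for large $n$ and is admissible in \eqref{4.def:Kchi0} — and, since truncating an equi-integrable sequence near the boundary does not raise the limit, it satisfies $\limsup_n\int_{D\times I}Q^{\chi^{h_{n,m}}}(x,\imath(\mat M)+\nabla_{h_{n,m}}\vect\psi^{h_{n,m}})\,\dd x\le\mathcal{K}_{(\chi^{h_{n,m}})}(\mat M,D)=\int_D Q(x_0'+s_mz',\mat M_1,\mat M_2)\,\dd z'$, exactly as in the upper bound of Lemma \ref{3.14}. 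Thus $\limsup_n\widetilde{\mathcal{K}}^0_{\chi^{h_{n,m}}}(\mat M,D,B(r,r))\le\int_D Q(x_0'+s_mz',\mat M_1,\mat M_2)\,\dd z'$, whose average over $D$ tends to $Q(x_0',\mat M_1,\mat M_2)$ as $m\to\infty$ at every Lebesgue point $x_0'$ of $x'\mapsto Q(x',\mat M_1,\mat M_2)$; this gives $\limsup_m\limsup_n\frac1{|D|}\widetilde{\mathcal{K}}^0_{\chi^{h_{n,m}}}(\mat M,D,B(r,r))\le Q(x_0',\mat M_1,\mat M_2)$.

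For the lower bound, using $B(r,r)\subset\{\|(\psi_1,\psi_2,h_{n,m}\psi_3)\|_{L^2}\le\sqrt2\,r\}$ I would apply the second statement of Lemma \ref{3.14} to $(\chi^{h_{n,m}})_n$ (limit density $Q^{(m)}=Q(x_0'+s_m\cdot,\cdot,\cdot)$), obtaining for fixed $m$
\[
\liminf_n\widetilde{\mathcal{K}}^0_{\chi^{h_{n,m}}}(\mat M,D,B(r,r))\ \ge\ \min_{(\vect w,v)\in\set N(D)}\int_D Q(x_0'+s_mz',\mat M_1+\sym\nabla'\vect w,\mat M_2-\nabla'^2v)\,\dd z'\,,
\]
with $\set N(D)=\{(\vect w,v)\in H_0^1(D,\R^2)\times H_0^2(D):\ \|\vect w\|_{L^2}^2+\|v\|_{L^2}^2\le2r^2\}$, a weakly closed set obeying the norm bound of Lemma \ref{3.lem:minimizers} (Korn on $H_0^1$, equivalence of norms on $H_0^2$). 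By Lemma \ref{3.cor:minimizers} the right-hand side converges as $m\to\infty$ to $\min_{\set N(D)}\int_D Q(x_0',\mat M_1+\sym\nabla'\vect w,\mat M_2-\nabla'^2v)\,\dd z'$, and since $Q(x_0',\cdot,\cdot)$ is convex while $\int_D\sym\nabla'\vect w=\int_D\nabla'^2v=0$, Jensen's inequality and $(\vect0,0)\in\set N(D)$ show this minimum equals $|D|\,Q(x_0',\mat M_1,\mat M_2)$. Hence $\liminf_m\liminf_n\frac1{|D|}\widetilde{\mathcal{K}}^0_{\chi^{h_{n,m}}}(\mat M,D,B(r,r))\ge Q(x_0',\mat M_1,\mat M_2)$.

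Combining the two bounds with $\limsup_m\limsup_n(\cdot)\ge\liminf_m\liminf_n(\cdot)$ squeezes both quantities to $Q(x_0',\mat M_1,\mat M_2)$ at every admissible $x_0'$, and the general $D,r,(\mat M_1,\mat M_2)$ follow by the density/continuity argument of the first paragraph. I expect the lower bound to be the main obstacle: one must choose the $L^2$-ball so that the constraint surviving on the limiting correctors $(\vect w,v)$ stays fixed and non-degenerate in $m$ (this is why it is convenient to apply Lemma \ref{3.14} on $D$ directly rather than on $x_0'+s_mD$), and then transport minimality through the blow-up — which is precisely where the equi-integrability embedded in Lemmas \ref{lemma:equi-int1} and \ref{3.14} and the stability of minimizers of rescaled quadratic functionals (Lemma \ref{3.cor:minimizers}) are essential.
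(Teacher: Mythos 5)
Your proof follows the paper's argument: the same dilation/blow-up identification of the limit energy density of $(\chi^{h_{n,m}})_n$ as $Q(x_0'+s_m\cdot,\cdot,\cdot)$, the same upper bound reducing to an average of $Q$ over $x_0'+s_mD$ plus a Lebesgue-point argument, and the same lower bound via Lemma \ref{3.14} followed by Lemma \ref{3.cor:minimizers} and convexity/Jensen. The two cosmetic differences — you construct and truncate explicit correctors for the upper bound where the paper simply invokes $\limsup_n\widetilde{\set K}^0_{\chi^{h_{n,m}}}(\cdot,D,B(r,r))\le\widetilde{\mathcal{K}}_{(\chi^{h_{n,m}})}(\cdot,D)$ from the sup characterization, and for the lower bound you keep the $L^2$-ball constraint (making $\set N(D)$ a genuine ball) where the paper enlarges to $B(\infty,\infty)$ and minimizes over all of $H_0^1\times H_0^2$ — are both correct and do not change the structure of the proof.
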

\begin{proof}
Let $\mat M_1,\mat M_2\in \R^{2\times2}_{\sym}$. 
Take the sequence of minima 
$(\vect\psi^{n,m})_n\subset H^1((x_0' + s_mD)\times I,\R^3)$ such that for every $m\in\N$
\begin{equation*}
\widetilde{\mathcal{K}}_{\chi^{h_n}}^0(\mat M_1 + x_3\mat M_2, x_0' + s_mD, B(r_1,r_2)) = 
\int_{(x_0' + s_mD)\times I}Q^{\chi^{h_n}}(x,\imath(\mat M_1 + x_3\mat M_2) + \nabla_{h_n}\vect{\psi}^{n,m})\dd x\,,
\end{equation*}
$\vect\psi^{n,m} = 0$ on $\pa(x_0'+s_mD)\times I$, $\|(\psi_1^{n,m},\psi_2^{n,m})\|_{L^2}\leq r_1$, and
$\|h_n\psi_3^{n,m}\|_{L^2}\leq r_2$.
For arbitrary $m\in\N$ and $\vect{\psi}\in H^1((x_0'+s_mD)\times I,\R^3)$, 
define $\tilde{\vect{\psi}}(z',z_3) = \frac{1}{s_m}\vect{\psi}(x_0' + s_mz', z_3)\in H^1(D\times I,\R^3)$,
and observe that
\begin{equation*}
\nabla_{h_{n,m}}\tilde{\vect{\psi}} = \nabla_{h_n}\vect{\psi}\,.
\end{equation*}
Next, changing variables in the above integral, we obtain
\begin{align*}
\widetilde{\set{K}}_{\chi^{h_n}}^0(\mat M_1 + x_3\mat M_2,x_0' + s_mD, B(r,r)) 
		& = s_m^2 \int_{D\times I}Q^{\chi^{h_{n,m}}}(x,\imath(\mat M_1 + x_3\mat M_2) 
		+ \nabla_{h_{n,m}}\tilde{\vect{\psi}}^{n,m})\dd x \\
		& = s_m^2 \mathcal{K}_{\chi^{h_{n,m}}}^0(\mat M_1 + x_3\mat M_2, D, B(r/s_m,r/s_m^2))\,.
\end{align*}
Therefore, by the definition of $\widetilde{\set{K}}_{\chi^{h_n}}^0$,
\begin{align*}
\limsup_{n\to\infty}\widetilde{\mathcal{K}}_{\chi^{h_{n,m}}}^0(\mat M_1 + x_3\mat M_2, D, B(r,r)) & =
\limsup_{n\to\infty}\frac{1}{s_m^2}\widetilde{\mathcal{K}}_{\chi^{h_{n}}}^0(\mat M_1 + x_3\mat M_2, x_0' + s_mD,B(s_mr,s_m^2r)) \\
& \leq \frac{1}{s_m^2}\Kchit(\mat M_1 + x_3\mat M_2, x_0' + s_mD) \\
& = \frac{1}{s_m^2}\int_{x_0' + s_mD}Q(x',\mat M_1,\mat M_2)\dd x' = \int_D Q(x_0' + s_mx', \mat M_1, \mat M_2)\dd x'\,.
\end{align*}
Finally, for any $x_0'$, Lebesgue point of $Q$, we get


\begin{equation*}
\limsup_{m\to\infty}\limsup_{n\to\infty}\widetilde{\mathcal{K}}_{\chi^{h_{n,m}}}^0(\mat M_1 + x_3\mat M_2, D, B(r,r)) 
\leq |D|Q(x_0',\mat M_1, \mat M_2)\,.
\end{equation*}
On the other hand, using Lemma \ref{3.14}, we compute the lower bound as
\begin{align*}
\liminf_{n\to\infty}\widetilde{\mathcal{K}}_{\chi^{h_{n,m}}}^0 & (\mat M_1 + x_3\mat M_2, D, B(r,r)) =
\liminf_{n\to\infty}\frac{1}{s_m^2}\widetilde{\mathcal{K}}_{\chi^{h_{n}}}^0(\mat M_1 + x_3\mat M_2, x_0' + s_mD,B(s_mr,s_m^2r))\\
& \geq \liminf_{n\to\infty}\frac{1}{s_m^2}\widetilde{\mathcal{K}}_{\chi^{h_{n}}}^0(\mat M_1 + x_3\mat M_2, x_0' + s_mD,B(\infty,\infty))\\
& \geq \min_{\substack{\vect w\in H_0^1(x_0' + s_mD,\R^2), \\ v\in H_0^2(x_0' + s_mD) 
}}
\frac{1}{s_m^2} \int_{x_0' + s_mD} Q(x', \mat M_1 + \sym\nabla'\vect w,\mat M_2 - \nabla'^2v)\dd x'\\
& = \min_{\substack{\vect w\in H_0^1(D,\R^2), \\ v\in H_0^2(D) }}
\int_{D} Q(x_0' + s_mx', \mat M_1 + \sym\nabla'\vect w,\mat M_2 - \nabla'^2v)\dd x'\,.
\end{align*}
Applying Corollary \ref{3.cor:minimizers}, for a.e.~$x_0'\in\omega$ we have 
\begin{align*}
\lim_{m\to\infty} & \min_{\substack{\vect w\in H_0^1(D,\R^2), \\ v\in H_0^2(D)}}
 \int_{D} Q(x_0' + s_mx', \mat M_1 + \sym\nabla'\vect w,\mat M_2 - \nabla'^2v)\dd x' \\ 
& = \min_{\substack{\vect w\in H_0^1(D,\R^2), \\ v\in H_0^2(D)}}
\int_{D} Q(x_0', \mat M_1 + \sym\nabla'\vect w,\mat M_2 - \nabla'^2v)\dd x'
=|D|Q(x_0',\mat M_1, \mat M_2)\,.
\end{align*}
The latter equality follows by the convexity of $Q$. The obtained chain of inequalities 
\begin{align*}
\limsup_{m\to\infty}\limsup_{n\to\infty}\widetilde{\mathcal{K}}_{\chi^{h_{n,m}}}^0(\mat M_1 + x_3\mat M_2, D, B(r,r)) 
&\leq |D|Q(x_0',\mat M_1, \mat M_2) \\ 
& \leq \liminf_{m\to\infty}\liminf_{n\to\infty}\widetilde{\mathcal{K}}_{\chi^{h_{n,m}}}^0(\mat M_1 + x_3\mat M_2, D, B(r,r))
\end{align*}
implies the claim.
\end{proof}

\begin{proof}[Proof of Theorem \ref{thm:2}]

\noindent\fbox{\em(i) $\Rightarrow$ (ii)} 
We will prove that for a.e.~$x_0'\in\omega$, $Q(x_0',\cdot,\cdot)\in \set G_{\theta(x_0')}$, and the claim will follow by 
Proposition \ref{4.prop:geqp}. Define $(\chi^{h_{n,m}})_{n,m}$ as before, then for every $i=1,\ldots,N$ and
$\phi\in L^1([0,1]^2\times I)$, we calculate for a.e. $x_0' \in \omega$
\begin{align}
\lim_{m\to\infty}& \lim_{n\to\infty} \left|\int_{[0,1]^2\times I} \left(\chi_i^{h_{n,m}}(x) 
- \mu_i(x_0',x_3)\right)\phi(x)\dd x \right| \nonumber\\
& = \lim_{m\to\infty}\lim_{n\to\infty}
\frac{1}{s_m^2} \left|\int_{(x_0'+s_m[0,1]^2)\times I} 
\left(\chi_i^{h_n}(x) - \mu_i(x_0',x_3)\right)\phi\Big(\frac{x'-x_0'}{s_m}, x_3\Big)\dd x \right|\nonumber\\
& = \lim_{m\to\infty}
\frac{1}{s_m^2} \left|\int_{(x_0'+s_m[0,1]^2)\times I} 
\big(\mu_i(x',x_3) - \mu_i(x_0',x_3)\big)\phi\Big(\frac{x'-x_0'}{s_m},x_3\Big)\dd x \right|\nonumber \\
& \leq \|\phi\|_{L^\infty} \lim_{m\to\infty}\frac{1}{s_m^2} \int_{(x_0'+s_m[0,1]^2)\times I} 
\left|\mu_i(x',x_3) - \mu_i(x_0',x_3)\right|\dd x = 0\,. \label{4.eq:limlim_chi}
\end{align}
The last equality is easy to argument by the Lebesgue dominated convergence theorem and using the facts that $\mu_i \in L^{\infty} (\omega\times I)$ 
and for a.e.~$x_0' \in \omega$, $(x_0',x_3)$ is the Lebesgue point for $\mu_i(\cdot,x_3)$ for a.e. $x_3 \in I$. 
Next we proceed as in the proof of Proposition \ref{4.prop:closure}.
For countable dense families: $(D_k)_{k}$ of open Lipschitz subsets in $[0,1]^2$; 
$(\mat M_{1,j},\mat M_{2,j})_j\subset \R_{\sym}^{2\times 2}\times \R_{\sym}^{2\times 2}$;
$(\phi_k)_k\subset C([0,1]^2\times I)$, dense in $L^1([0,1]^2\times I)$; and monotonically decreasing to zero sequence $(r_l)_l$,
define doubly indexed family $(g_{n,m})_{n,m\in\N}$ by
\begin{align*}
g_{n,m} & = \sum_{k=1}^\infty 2^{-k}\min\left\{\max_{i=1,\ldots,N}\left|
\int_{[0,1]^2\times I} \left(\chi_i^{h_{n,m}}(x) 
- \mu_i(x_0',x_3)\right)\phi_k(x)\dd x \right|\,, 1\right\}\\
& \quad + \sum_{j,k,l=1}^\infty 2^{-j-k-l}\min\left\{\left|\frac{1}{|D_k|}
\widetilde{\mathcal{K}}_{\chi^{h_{n,m}}}^0(\mat M_{1,j} + x_3\mat M_{2,j}, D_k, B(r_l,r_l)) - 
Q(x_0',\mat M_{1,j},\mat M_{2,j})\right|\,, 1\right\}\,.
\end{align*}
Employing (\ref{4.eq:limlim_chi}) and the previous lemma, it follows that 
$\limsup_{m\to\infty}\limsup_{n\to\infty}g_{n,m} = 0$. By already utilized diagonal procedure, there exists an increasing
function $n\mapsto m(n)$ such that $\lim_{n\to\infty}g_{n,m(n)} = 0$.
The latter implies 
\begin{align*}
&\lim_{n\to\infty}\int_{[0,1]^2\times I} \left(\chi_i^{h_{n,m(n)}}(x) 
- \mu_i(x_0',x_3)\right)\phi_k(x)\dd x = 0\,, \quad \forall k\in\N\,,
\ i = 1,\ldots,N\,,\\
&\lim_{n\to\infty}\left(\frac{1}{|D_k|}
\widetilde{\mathcal{K}}_{\chi^{h_{n,m(n)}}}^0(\mat M_{1,j} + x_3\mat M_{2,j}, D_k, B(r_l,r_l)) - 
Q(x_0',\mat M_{1,j},\mat M_{2,j})\right) = 0\,, \quad \forall j,k,l\in\N\,,
\end{align*}
and we conclude the proof as in Proposition \ref{4.prop:closure}.

\noindent\fbox{\em(i) $\Leftarrow$ (ii)} For readability reasons, the proof of this direction is devided 
into {\em Steps 1--4}.

\noindent{\em Step 1.} Performing completely analogous construction from \cite[Theorem 3.5]{BaBa09}, which utilizes the Lusin and
the Scorza Dragoni theorems (cf.~\cite{EkTe99}), as well as the diagonal procedure,
for every $m\in\N$ one constructs a sequence $(\chi^{n,m})_n\subset\set X^N(\Omega)$
satisfying:
\begin{enumerate}[(i)]
  \item 
\begin{equation}\label{4.eq:wstar_conv}
\lim_{m\to\infty}\lim_{n\to\infty}\int_{\omega\times I}\chi^{n,m}_i(x',x_3)\phi(x')\dd x = 
\int_{\omega}\theta_i(x')\phi(x')\dd x'
\end{equation}
for every $i=1,\ldots,N$ and $\phi\in L^1(\omega)$. 
\item for each $m\in\N$ there exists a partition of $\Omega$
into a finite number of $M(m)$ open Lipschitz subsets $\{U_{j,m}\times I\}_{j=1,\ldots,M(m)}$ such that
$|U_{1,m}\times I|\to 0$ as $m\to\infty$, and for every $j=2,\ldots,M(m)$, 
there exists $x_{j,m}'\in U_{j,m}$ such that the sequence 
$(\chi^{n,m}|_{U_{j,m}\times I})_n \subset \set X^N(U_{j,m}\times I)$ has the homogeneous limit energy density $Q(x_{j,m}',\cdot,\cdot)\in\set G_{\theta(x_{j,m}')}$.

\item for the sequence of quadratic functions defined by
\begin{equation*}
Q_m(x',\mat M_1,\mat M_2) = \mathbbm{1}_{U_{1,m}}Q(x',\mat M_1,\mat M_2) 
+ \sum_{j=2}^{M(m)}\mathbbm{1}_{U_{j,m}}Q(x'_{j,m},\mat M_1,\mat M_2)
\end{equation*}
one can prove 
\begin{equation}\label{4.eq:qm_bound}
\lim_{m\to\infty}\int_\omega \sup_{|\mat M_1| + |\mat M_2| \leq r}
|Q_m(x',\mat M_1,\mat M_2) - Q(x',\mat M_1,\mat M_2)|\dd x' = 0
\end{equation}
for every $r>0$. From this we can assume that $(Q_m)_m$ converges to $Q$, i.e.~for a.e.~$x'\in\omega$ and for all 
$\mat M_1,\mat M_2\in \R_{\sym}^{2\times2}$, 
$\lim_{m\to\infty}Q_m(x',\mat M_1,\mat M_2) = Q(x',\mat M_1,\mat M_2)$.
This follows by taking the subsequence such that we have pointwise convergence of the integral function in \eqref{4.eq:qm_bound}.
\end{enumerate}
The proof is done first by taking a sequence of compact subsets $K_m$ of $\Omega$ such that 
$|\Omega \backslash K_m| \leq \frac{1}{m}$ and such that functions $\theta_i$ and 
$Q (\cdot, \cdot, \cdot)$ are continuous on $K_m$. Then we divide $\omega$ into $k$ 
pieces $\{U_{j,k,m}\}_{j=1,\dots,k}$ with Lipschitz boundary such that 
$\lim_{k \to \infty}\max_{1 \leq j \leq k} \diam U_{j,k,m}=0$. Next, in every $U_{j,k,m}$ which
intersects with $K_m$, we choose $x'_{j,m} \in K_m \cap U_{j,k,m}$. Those which do not intersect 
with $K_m$ we join in $U_{1,k,m}$. For each $x'_{j,k,m}$ we choose a sequence 
$(\chi^{n,j,k,m}|_{U_{j,k,m}\times I})_n \subset \set X^N(U_{j,k,m}\times I)$ that has the homogeneous 
limit energy density $Q(x_{j,k,m}',\cdot,\cdot)\in\set G_{\theta(x_{j,k,m}')}$. 
Then we fill the domain with $N$ materials on each $U_{j,k,m}$ according to the sequence of 
characteristic functions (on $U_{1,k,m}$ we put, e.g.~the material with energy density $Q_1$). 
In this way we define the sequence $(\chi^{n,k,m})_n\subset \set X^N(\Omega)$. 
Finally, we perform the diagonalization to find $M(m)=k(m)$ for which \eqref{4.eq:wstar_conv} 
and \eqref{4.eq:qm_bound} is satisfied (see \cite{BaBa09} for details). 
 \\
\noindent{\em Step 2 (Upper bound).} For arbitrary sequence $(h_n)_n$, $h_n\downarrow0$, and for every $m\in\N$,
denote $\chi^{h_{n,m}}= \chi^{n,m}$.
For fixed $m\in\N$ and arbitrary $r>0$, we estimate 
\begin{align*}
\limsup_{n\to\infty} &\, \widetilde{\set K}_{\chi^{h_{n,m}}}^0(\mat M_1 + x_3\mat M_2, A, B(r)) 
\leq \widetilde{\set K}_{(\chi^{h_{n,m}})}(\mat M_1 + x_3\mat M_2, A)\\
& = \widetilde{\set K}_{(\chi^{h_{n,m}}|_{U_{1,m}\times I})}(\mat M_1 + x_3\mat M_2, A\cap U_{1,m}) 
+ \sum_{j=2}^{M(m)}\widetilde{\set K}_{(\chi^{h_{n,m}}|_{U_{j,m} \times I})}(\mat M_1 + x_3\mat M_2, A\cap U_{j,m})\\
& =\widetilde{ \set K}_{(\chi^{h_{n,m}}|_{U_{1,m} \times I})}(\mat M_1 + x_3\mat M_2, A\cap U_{1,m}) 
+ \sum_{j=2}^{M(m)}|A\cap U_{j,m}|Q(x_{j,m}',\mat M_1,\mat M_2)\\
& = \widetilde{\set K}_{(\chi^{h_{n,m}}|_{U_{1,m} \times I})}(\mat M_1 + x_3\mat M_2, A\cap U_{1,m}) 
+ \int_{A\backslash U_{1,m}} Q_m(x',\mat M_1,\mat M_2)\dd x'\,.
\end{align*}
Using the fact that $|U_{1,m}|\to 0$ as $m\to\infty$, identity (\ref{4.eq:qm_bound}) 
implies the following upper bound
\begin{equation}\label{4.eq:limlimsup_ub}
\limsup_{m\to\infty}\limsup_{n\to\infty}\widetilde{\set K}_{\chi^{h_{n,m}}}^0(\mat M_1 + x_3\mat M_2, A, B(r)) 
\leq \int_A Q(x',\mat M_1,\mat M_2)\dd x\,. 
\end{equation}
{\em Step 3 (Lower bound).} Again take an arbitrary decreasing to zero sequence $(h_n)_n$, and denote
$\chi^{h_{n,m}} = \chi^{n,m}$ with $\chi^{n,m}$ constructed in {\em Step 1}. For each $m\in\N$
take (on a subsequence of $(h_n)_n$) a minimizing sequence $(\vect\psi^{n,m})_n\subset H^1(A\times I,\R^3)$ 
such that $\vect\psi^{n,m} = 0$
on $\pa A\times I$, $\|(\psi^{n,m}_{1}, \psi^{n,m}_{2}, h_n\psi^{n,m}_{3})\|_{L^2} \leq r$ and 
\begin{equation*}
\widetilde{\set K}_{\chi^{h_{n,m}}}^0(\mat M_1 + x_3\mat M_2, A, B(r)) = \int_{A\times I}
Q^{\chi^{h_{n,m}}}(x,\imath(\mat M_1 + x_3\mat M_2) + \nabla_{h_n}\vect\psi^{n,m})\dd x
\end{equation*}
for all $n,m\in\N$. Using the equi-coercivity of $Q^{\chi^{h_{n,m}}}$ and Lemma \ref{app:lem.limsup}, there
exist $\vect w\in H_0^1(A,\R^2)$, $v\in H_0^2(A)$, and $(\tilde{\vect\psi}^{n,m})_n\subset H^1(A\times I,\R^3)$
satisfying $\|\vect w\|_{L^2}^2 + \|v\|_{L^2}^2 \leq r^2$, $\tilde{\vect\psi}^{n,m} = 0$ on $\pa A\times I$,
$(\tilde\psi^{n,m}_{1}, \tilde\psi^{n,m}_{2}, h_n\tilde\psi^{n,m}_{3})\to 0$ strongly in the $L^2$-norm as 
$n\to \infty$, and the following identity holds (again on a subsequence)
\begin{equation}\label{4.eq:symgrad_nm}
\sym\nabla_{h_n}\vect\psi^{n,m} = \imath(\sym\nabla'\vect w - x_3\nabla'^2v) + \sym\nabla_{h_n}\tilde{\vect\psi}^{n,m}\,.
\end{equation}
By the definition of $\tilde{\set{K}}_{(\chi^{h_{n,m}})}$ and using (\ref{4.eq:symgrad_nm}) it follows
\begin{align*}
\liminf_{n\to\infty}\widetilde{\set K}_{\chi^{h_{n,m}}}^0(\mat M_1 + x_3\mat M_2, A, B(r)) & \geq
\sum_{j=2}^{M(m)}\widetilde{\set K}_{(\chi^{h_{n,m}}|_{U_{j,m} \times I})}(\mat M_1 + x_3\mat M_2 + \sym\nabla'\vect w - x_3\nabla'^2v, A\cap U_{j,m})\\
& = \sum_{j=2}^{M(m)}\int_{A\cap U_{j,m}} Q(x_{j,m}',\mat M_1 + \sym\nabla'\vect w,\mat M_2 - \nabla'^2v)\dd x'\,.
\end{align*}
Since
\begin{equation*}
\lim_{m\to\infty}\int_{A\cap U_{1,m}} Q(x',\mat M_1 + \sym\nabla'\vect w,\mat M_2 - \nabla'^2v)\dd x' = 0\,,
\end{equation*}
by the Lebesgue dominated convergence theorem we conclude
\begin{equation*}
\liminf_{m\to\infty}\liminf_{n\to\infty}\widetilde{\set K}_{\chi^{h_{n,m}}}^0(\mat M_1 + x_3\mat M_2, A, B(r)) \geq
\int_{A} Q(x',\mat M_1 + \sym\nabla'\vect w,\mat M_2 - \nabla'^2v)\dd x'\,,
\end{equation*}
hence, the lower bound 
\begin{align}
\liminf_{m\to\infty}\liminf_{n\to\infty}\widetilde{\set K}_{(\chi^{n,m})}^0(\mat M_1 + x_3\mat M_2, A, B(r)) & \geq
\min_{\substack{\vect w\in H_0^1(A,\R^2),\, v\in H^2_0(A) \\ \|\vect w\|_{L^2}^2 + \|v\|_{L^2}^2 \leq r^2}}
\int_{A} Q(x',\mat M_1 + \sym\nabla'\vect w,\mat M_2 - \nabla'^2v)\dd x' \nonumber\\
& =: m_Q(\mat M_1,\mat M_2,A,B(r)) \label{4.eq:limliminf_lb}
\end{align}
is established.

\noindent{\em Step 4 (Diagonalization).} Likewise in the first part of the proof, take dense families: 
$(D_k)_{k}$ of open Lipschitz subsets in $\omega$; 
$(\mat M_{1,j},\mat M_{2,j})_j\subset \R_{\sym}^{2\times 2}\times \R_{\sym}^{2\times 2}$;
$(\phi_k)_k\subset L^1(\omega)$; and monotonically decreasing to zero sequence $(r_l)_l$,
and define doubly indexed family $(g_{n,m})_{n,m\in\N}$ by


\begin{align*}
g_{n,m} & = \sum_{k=1}^\infty 2^{-k}\min\left\{\max_{i=1,\ldots,N}\left|
\int_{\omega\times I}\left(\chi_i^{h_{n,m}}(x',x_3) - \theta_i(x')\right)\phi_k(x')\dd x\right|\,, 1\right\}\\
& \quad + \sum_{j,k,l=1}^\infty 2^{-j-k-l}\min\left\{\left(
\widetilde{\mathcal{K}}_{\chi^{h_{n,m}}}^0(\mat M_{1,j} + x_3\mat M_{2,j}, D_k, B(r_l)) - 
\int_{D_k}Q(x',\mat M_{1,j},\mat M_{2,j})\dd x'\right)_+\,, 1\right\}\\
& \quad + \sum_{j,k,l=1}^\infty 2^{-j-k-l}\min\left\{\left(
m_Q(\mat M_1, \mat M_2, D_k, B(r_l)) - 
\widetilde{\mathcal{K}}_{\chi^{h_{n,m}}}^0(\mat M_{1,j} + x_3\mat M_{2,j}, D_k, B(r_l))\right)_+\,, 1\right\}\,,
\end{align*}
where $(\,\cdot\,)_+ = \max\{\,\cdot\,,0\}$.
Invoking (\ref{4.eq:wstar_conv}), upper bound (\ref{4.eq:limlimsup_ub}) and lower bound (\ref{4.eq:limliminf_lb}),
we infer that \\ $\limsup_{m\to\infty}\limsup_{n\to\infty}g_{n,m} = 0$. Again using diagonal procedure, there exists
an increasing function $n\mapsto m(n)$, such that $\lim_{n\to\infty} g_{n,m(n)} = 0$, which implies 
\begin{align*}
&\lim_{n\to\infty}\int_{\omega\times I}\left(\chi_i^{h_{n,m(n)}}(x',x_3) - \theta_i(x')\right)\phi_k(x')\dd x 
= 0\,, \quad \forall k\in\N\,, \ i = 1,\ldots,N\,,\\
&\limsup_{n\to\infty}\widetilde{\mathcal{K}}_{\chi^{h_{n,m(n)}}}^0(\mat M_{1,j} + x_3\mat M_{2,j}, D_k, B(r_l)) \leq   
\int_{D_k}Q(x',\mat M_{1,j},\mat M_{2,j})\dd x'\,, \quad \forall j,k,l\in\N\,, \\
& \liminf_{n\to\infty}\widetilde{\mathcal{K}}_{\chi^{h_{n,m(n)}}}^0(\mat M_{1,j} + x_3\mat M_{2,j}, D_k, B(r_l)) \geq
m_Q(\mat M_1, \mat M_2, D_k, B(r_l))\,, \quad \forall j,k,l\in\N\,.
\end{align*}
First, by the density argument observe that the sequence $\tilde\chi^{h_n'}:= \chi^{h_{n,m(n)}}$, $n\in\N$, satisfies that $\int_I \tilde\chi^{h_n'}(\cdot,x_3)\dd x_3$ 
weakly* converges to $\theta$ in $L^\infty(\omega,[0,1]^N)$.
After noticing that
\begin{equation*}
\lim_{l\to\infty}m_Q(\mat M_{1,j},\mat M_{2,j}, D_k, B(r_l)) = 
\int_{D_k} Q(x',\mat M_{1,j},\mat M_{2,j})\dd x'\,,
\quad \forall j,k\in\N\,,
\end{equation*}
again by the density argument we infer
\begin{align*}
\widetilde{\set K}_{(\tilde\chi^{h_n'})}(\mat M_1 + x_3\mat M_2, A) = \int_A Q(x',\mat M_1,\mat M_2)\dd x'\,
\end{align*} 
for all $\mat M_1, \mat M_2\in\R_{\sym}^{2\times 2}$ and $A\subset \omega$ open subset with Lipschitz boundary. 
Finally, taking a suitable subsequence, still denoted by $(h_n')_n$, such that $\tilde{\chi}^{h_n'}$ weakly* converges, 
finishes the proof.
\end{proof}

\section*{Appendix}

\setcounter{theorem}{0}
\renewcommand{\thesection}{A} 

\begin{theorem}[Korn inequalities, \cite{Hor95}]\label{app:thm.korn}
Let $p>1$, $\Omega\subset\R^3$ and $\Gamma\subset\pa\Omega$ of positive measure, then the following inequalities hold:
\begin{align}
\|\mat{\psi}\|_{W^{1,p}}^p &\leq C_K\left(\|\mat{\psi}\|_{L^p}^p + \|\sym\nabla \mat\psi\|_{L^p}^p\right)\,,
\quad \forall \mat{\psi}\in W^{1,p}(\Omega,\R^3)\,,\\ \label{Kornwithbc}
\|\mat{\psi}\|_{W^{1,p}}^p &\leq C_K^{\Gamma}(\|\vect\psi\|^p_{L^p(\Gamma)}+\left\|\sym\nabla \mat\psi\|_{L^p}^p\right)\,,
\quad \forall \mat{\psi}\in W^{1,p}(\Omega,\R^3)\,,
\end{align}
where positive constants $C_K$ and $C_K^{\Gamma}$ depend only on $p$, $\Omega$ and $\Gamma$.
\end{theorem}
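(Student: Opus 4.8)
The plan is to reduce both estimates to two classical facts: the representation of the second derivatives of a displacement through first derivatives of its symmetrized gradient, and the Ne\v cas--Lions lemma in $L^p$, which asserts that on a bounded Lipschitz domain one has $\|f\|_{L^p(\Omega)} \le C\bigl(\|f\|_{W^{-1,p}(\Omega)} + \|\na f\|_{W^{-1,p}(\Omega)}\bigr)$ for every $f \in L^p(\Omega)$.

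First I would prove the estimate without the boundary term. Writing $e_{ij} = \tfrac12(\pa_i\psi_j + \pa_j\psi_i)$ for the entries of $\sym\na\mat\psi$, one has, in the sense of distributions, the pointwise identity $\pa_k\pa_l\psi_i = \pa_l e_{ik} + \pa_k e_{il} - \pa_i e_{kl}$. Setting $\mat g = \na\mat\psi$, every component of $\na\mat g$ is therefore a finite combination of first-order distributional derivatives of entries of $\sym\na\mat\psi$, whence $\|\na\mat g\|_{W^{-1,p}} \le C\|\sym\na\mat\psi\|_{L^p}$; also trivially $\|\mat g\|_{W^{-1,p}} \le C\|\mat\psi\|_{L^p}$. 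Applying the Ne\v cas--Lions lemma to $\mat g$ yields $\|\na\mat\psi\|_{L^p} \le C\bigl(\|\mat\psi\|_{L^p} + \|\sym\na\mat\psi\|_{L^p}\bigr)$, and adding $\|\mat\psi\|_{L^p}$ to both sides gives the first inequality.

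For the inequality with the boundary term I would argue by contradiction and compactness. If it fails there is a sequence $(\mat\psi_n) \subset W^{1,p}(\Omega,\R^3)$ with $\|\mat\psi_n\|_{W^{1,p}} = 1$ and $\|\mat\psi_n\|_{L^p(\Gamma)}^p + \|\sym\na\mat\psi_n\|_{L^p}^p \to 0$. By the compact embedding $W^{1,p}(\Omega) \hookrightarrow L^p(\Omega)$ a subsequence is Cauchy in $L^p$, and then, by the first inequality applied to the differences $\mat\psi_n - \mat\psi_m$, it is Cauchy in $W^{1,p}$; its limit $\mat\psi$ satisfies $\|\mat\psi\|_{W^{1,p}} = 1$ and $\sym\na\mat\psi = 0$, so $\mat\psi(x) = \mat a + \mat B x$ with $\mat B$ skew. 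Continuity of the trace operator $W^{1,p}(\Omega) \to L^p(\pa\Omega)$ forces $\mat\psi|_\Gamma = 0$, and an infinitesimal rigid displacement vanishing on a subset of $\pa\Omega$ of positive surface measure must vanish identically, which contradicts $\|\mat\psi\|_{W^{1,p}} = 1$.

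The main obstacle is the Ne\v cas--Lions lemma itself in the $L^p$ setting: on a general bounded Lipschitz domain its proof requires localization and flattening of the boundary together with $L^p$ bounds for the relevant singular integrals, equivalently the surjectivity of $\diver : W^{1,p}_0(\Omega,\R^3) \to L^p(\Omega)/\R$. Granting that ingredient, as the cited reference does, the remaining steps above are routine, so I would invoke it rather than reprove it here.
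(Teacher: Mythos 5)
The paper does not prove this theorem at all; it is quoted from Horgan's survey \cite{Hor95} as a classical result, so there is no in-paper proof to compare against. Your argument is the standard one and is correct, modulo the implicit assumption (used throughout the paper, where $\Omega=\omega\times I$ with $\omega$ Lipschitz) that $\Omega$ is bounded with Lipschitz boundary: you need this both for the Ne\v cas--Lions lemma in $L^p$ and for the Rellich--Kondrachov compactness $W^{1,p}\hookrightarrow\hookrightarrow L^p$ in the contradiction step. The second-derivative identity $\pa_k\pa_l\psi_i=\pa_l e_{ik}+\pa_k e_{il}-\pa_i e_{kl}$ checks out, and your rigidity argument is sound: if $\mat\psi(x)=\mat a+\mat B x$ with $\mat B$ skew vanishes on a subset of $\pa\Omega$ of positive surface measure, then since a nonzero skew $3\times3$ matrix has rank $2$, the zero set of $\mat\psi$ is either empty or an affine line, both of zero surface measure, forcing $\mat\psi\equiv0$. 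One small point worth spelling out: the compactness step only gives you a subsequence convergent in $L^p$; you then correctly upgrade to $W^{1,p}$-convergence by applying the first Korn inequality to differences, since $\|\sym\nabla(\mat\psi_n-\mat\psi_m)\|_{L^p}\to0$ by assumption. Altogether this is a clean, self-contained rendering of the proof the cited reference intends.
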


\begin{theorem}[Griso's decomposition, \cite{Gri05}]\label{aux:thm.griso}
Let $ \omega\subset\R^2$ with Lipschitz boundary and $\mat{\psi}\in H^1(\omega\times I,\R^3)$, then for arbitrary $h>0$ 
the following identity holds
\begin{equation}\label{griso1}
\mat{\psi} = \hat{\mat{\psi}}(x') + \vect{r}(x')\wedge x_3\vect{e}_3 + \bar{\mat{\psi}}(x)
= \left\{   \begin{array}{l}
				\hat{\psi}_1(x') + r_2(x')x_3 + \bar{\psi}_1(x)\\
				\hat{\psi}_2(x') - r_1(x')x_3 + \bar{\psi}_2(x)\\
				\hat{\psi}_3(x') + \bar{\psi}_3(x)
			\end{array}
  \right.\,,
\end{equation}
where
\begin{equation} \label{griso2}
\hat{\mat\psi}(x') = \int_I \mat\psi(x',x_3)\dd x_3\,,\quad \vect{r}(x') 
= \frac32\int_Ix_3\vect{e}_3\wedge\mat\psi(x',x_3)\dd x_3\,.
\end{equation}
Moreover, the following inequality holds
\begin{equation}\label{prvaKorn}
\|\sym\nabla_h(\hat{\mat\psi} + \vect{r}\wedge x_3\vect{e}_3)\|_{L^2}^2 
+ \|\nabla_h\bar{\mat\psi}\|_{L^2}^2 + \frac{1}{h^2}\|\bar{\mat\psi}^h\|_{L^2}^2
\leq C\|\sym\nabla_h\mat\psi\|_{L^2}^2\,,
\end{equation}
with constant $C>0$ depending only on $\omega$.
\end{theorem}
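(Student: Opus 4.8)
The plan is as follows. The identity (\ref{griso1}) carries no content: one simply \emph{defines} $\bar{\vect\psi} := \vect\psi - \hat{\vect\psi} - \vect r\wedge x_3\vect e_3$, with $\hat{\vect\psi},\vect r$ as in (\ref{griso2}), and its componentwise form then follows from $\vect e_3\wedge\vect\psi = \psi_1\vect e_2 - \psi_2\vect e_1$. All the substance is in the inequality (\ref{prvaKorn}) \emph{with a constant depending only on $\omega$}, and I would follow the argument of \cite{Gri05}. I would first record the structural facts used throughout: since $\int_I x_3\,\dd x_3 = 0$ and $\hat{\vect\psi}$ is the fibre average, every component of $\bar{\vect\psi}(x',\cdot)$ has zero mean over $I$; the elementary displacement $\vect U := \hat{\vect\psi} + \vect r\wedge x_3\vect e_3$ is affine in $x_3$, with $\partial_3\vect U = \vect r\wedge\vect e_3$ independent of $x_3$; and the normalization $\tfrac32$ in (\ref{griso2}) is the one that makes $\vect r$ reproduce also the first $x_3$-moment of $\vect\psi$, so that $\int_I x_3\bar\psi_\alpha\,\dd x_3$ remains controlled.

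The first step is to identify the in-plane part of $\sym\nabla_h\vect U$ as a fibre moment of $\sym\nabla_h\vect\psi$. A direct computation gives, for the $2\times2$ in-plane block, $(\sym\nabla_h\vect U)' = \sym\nabla'\hat{\vect\psi}' + x_3\,\sym\nabla'(r_2,-r_1)$, together with $(\sym\nabla_h\vect U)_{33} = 0$ and $(\sym\nabla_h\vect U)_{\alpha 3} = \tfrac12\bigl(h^{-1}(\vect r\wedge\vect e_3)_\alpha + \partial_\alpha\hat\psi_3\bigr)$. Writing $\sym\nabla_h\vect\psi = \sym\nabla_h\vect U + \sym\nabla_h\bar{\vect\psi}$, integrating the in-plane block over $I$, resp.\ against $x_3$, and using that the zeroth, resp.\ first, $x_3$-moment of $\bar{\vect\psi}$ vanishes, I obtain $\sym\nabla'\hat{\vect\psi}'(x') = \int_I(\sym\nabla_h\vect\psi)'\,\dd x_3$ and $\sym\nabla'(r_2,-r_1)(x') = \tfrac32\int_I x_3\,(\sym\nabla_h\vect\psi)'\,\dd x_3$; Cauchy--Schwarz in $x_3$ then bounds the in-plane and $33$ parts of $\sym\nabla_h\vect U$ in $L^2$ by $\|\sym\nabla_h\vect\psi\|_{L^2}$.

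The heart of the matter is a scaled Korn--Poincaré inequality for fibre-mean-zero fields: there is $C = C(\omega)$ so that $\|\nabla_h\vect\phi\|_{L^2}^2 + h^{-2}\|\vect\phi^h\|_{L^2}^2 \le C\|\sym\nabla_h\vect\phi\|_{L^2}^2$ for all $h>0$ and all $\vect\phi\in H^1(\omega\times I,\R^3)$ with $\int_I\vect\phi(x',\cdot)\,\dd x_3 = 0$ a.e. I expect this to be the main obstacle, since the uniformity in $h$ cannot be obtained by rescaling a fixed-domain second Korn inequality, whose constant degenerates as $h\to0$; one proves it, as in \cite{Gri05}, by covering $\omega$ with finitely many fixed reference patches, applying Theorem \ref{app:thm.korn} and a one-dimensional Poincaré inequality in $x_3$ on the reference cell, and using the vanishing fibre average to kill the rigid-motion deficiency, so that the constant remains uniform.

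Granting this, I would apply it to $\vect\phi = \bar{\vect\psi}$ and close the estimate. It remains to control $(\sym\nabla_h\vect U)_{\alpha 3}$: integrating $(\sym\nabla_h\vect\psi)_{\alpha 3} = (\sym\nabla_h\vect U)_{\alpha 3} + \tfrac12\bigl(h^{-1}\partial_3\bar\psi_\alpha + \partial_\alpha\bar\psi_3\bigr)$ over $I$ and using $\int_I\bar\psi_3\,\dd x_3 = 0$ gives $(\sym\nabla_h\vect U)_{\alpha 3}(x') = \int_I(\sym\nabla_h\vect\psi)_{\alpha 3}\,\dd x_3 - \tfrac{1}{2h}\bigl(\bar\psi_\alpha(x',\tfrac12) - \bar\psi_\alpha(x',-\tfrac12)\bigr)$, and the one-dimensional trace inequality on $I$ together with the fibre Poincaré inequality and the Korn--Poincaré bound give $\tfrac1h\|\bar\psi_\alpha(\cdot,\pm\tfrac12)\|_{L^2(\omega)} \le C\|\nabla_h\bar{\vect\psi}\|_{L^2} \le C\|\sym\nabla_h\bar{\vect\psi}\|_{L^2}$. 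Combining with $\|\sym\nabla_h\bar{\vect\psi}\|_{L^2}\le\|\sym\nabla_h\vect\psi\|_{L^2} + \|\sym\nabla_h\vect U\|_{L^2}$ and the bounds on $\sym\nabla_h\vect U$ from the first step yields all three terms of (\ref{prvaKorn}); the delicate point is to arrange the absorption of the $\vect U$-contribution so that no $h$-dependent constant appears — this bookkeeping is exactly what is carried out in \cite{Gri05}.
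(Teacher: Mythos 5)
The paper does not prove Theorem \ref{aux:thm.griso} at all --- it is stated as a citation to \cite{Gri05} and used as a black box in Lemmas \ref{igor2} and \ref{app:lem.limsup}. So there is no ``paper's own proof'' to compare against; one can only check your sketch against Griso's argument on its own merits. Your reading of the algebraic content is correct: $\bar{\vect\psi}$ is simply defined, the fibre average of $\bar{\vect\psi}$ over $I$ vanishes, the in-plane block $(\sym\nabla_h\vect U)'$ equals $\sym\nabla'\hat{\vect\psi}' + x_3\sym\nabla'(r_2,-r_1)$, and the central lemma is a scaled Korn--Poincar\'e inequality for fibre-mean-zero fields (which indeed cannot be obtained by rescaling a single fixed-domain Korn inequality and requires the patch argument you describe).

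There are, however, two concrete problems with the sketch as written. First, the normalization: with $I=[-\tfrac12,\tfrac12]$ one has $\int_Ix_3^2\,\dd x_3=\tfrac1{12}$, and requiring $\int_Ix_3\bar\psi_\alpha\,\dd x_3=0$ forces $r_2=12\int_Ix_3\psi_1\,\dd x_3$ and $r_1=-12\int_Ix_3\psi_2\,\dd x_3$, i.e.\ the coefficient in \eqref{griso2} should be $12$, not $\tfrac32$ (with $\tfrac32$ the first moment of $\bar{\vect\psi}$ does \emph{not} vanish). This is almost certainly a typo in the statement, but your step deriving $\sym\nabla'(r_2,-r_1)=12\int_Ix_3(\sym\nabla_h\vect\psi)'\,\dd x_3$ depends on that vanishing; if you carry the stated constant $\tfrac32$ through, the identity is false and a leftover term $\int_Ix_3(\sym\nabla_h\bar{\vect\psi})'\,\dd x_3$ survives. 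Second, and more seriously, your closing absorption has a genuine circularity. You bound $(\sym\nabla_h\vect U)_{\alpha 3}$ via the trace of $\bar\psi_\alpha$, which you control by $\|\nabla_h\bar{\vect\psi}\|_{L^2}\le C\|\sym\nabla_h\bar{\vect\psi}\|_{L^2}$ (Korn--Poincar\'e), and then by $\|\sym\nabla_h\bar{\vect\psi}\|_{L^2}\le\|\sym\nabla_h\vect\psi\|_{L^2}+\|\sym\nabla_h\vect U\|_{L^2}$; but $\|\sym\nabla_h\vect U\|_{L^2}$ contains precisely the $\alpha 3$-components you are trying to estimate, and there is no reason the constant in this chain is small enough to absorb. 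Saying ``this bookkeeping is exactly what is carried out in \cite{Gri05}'' does not resolve it: Griso avoids the loop by proving a Korn-type estimate on the full field $\vect\psi$ first --- on each cell of a fixed cover of $\omega$, after rescaling in $x_3$, one gets $\|\nabla_h(\vect\psi-\vect a-\vect b\wedge x_3\vect e_3)\|_{L^2}+h^{-1}\|\vect\psi-\vect a-\vect b\wedge x_3\vect e_3\|_{L^2}\le C\|\sym\nabla_h\vect\psi\|_{L^2}$ for \emph{some} affine-in-$x_3$ rigid correction, and then identifies that correction with the explicit moments \eqref{griso2} by the orthogonality you already noted. That ordering (existence of a good correction first, identification second) is what removes the circularity, and it is the ingredient your sketch is missing.
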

The following corollary is the direct consequence of Theorem \ref{aux:thm.griso} (relation \eqref{prvaKorn}, i.e. \eqref{3.eq:compact_est}), Poincare and Korn inequalities.
\begin{corollary}[Korn's inequality for thin domains]\label{kornthincor}
Let $\omega \subset \R^2$ and $\Gamma \subset \partial \Omega$ of positive measure, then the following inequalities hold
\begin{align*}
	\|(\psi_1,\psi_2, h_n\psi_3)\|^2_{H^1} &\leq C_T\left(\|(\psi_1,\psi_2, h_n\psi_3)\|_{L^2}^2 + \|\sym\nabla_h \mat\psi\|_{L^2}^2\right)\,,
	\quad \forall \mat{\psi}\in H^1(\omega \times I,\R^3);\,\\
	\|(\psi_1,\psi_2, h_n\psi_3)\|_{H^1}^2 &\leq C_T^{\Gamma}(\||(\psi_1,\psi_2, h_n\psi_3)\|^2_{L^2(\Gamma)}+\left\|\sym\nabla_h \mat\psi\|_{L^2}^2\right)\,,
	\quad \forall \mat{\psi}\in H^1(\omega \times I,\R^3)\,,
\end{align*}
where positive constants $C_T$ and $C_T^{\Gamma}$ depend only on  $\omega$ and $\Gamma$.
\end{corollary}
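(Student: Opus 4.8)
The plan is to derive both inequalities from the Griso decomposition \eqref{griso1}–\eqref{prvaKorn} together with the classical Korn inequalities of Theorem \ref{app:thm.korn} applied on the fixed domain $\omega \times I$. First I would fix $\mat\psi \in H^1(\omega\times I,\R^3)$ and write its Griso decomposition $\mat\psi = \hat{\mat\psi}(x') + \vect r(x')\wedge x_3\vect e_3 + \bar{\mat\psi}(x)$. The key observation is that the rescaled field $(\psi_1,\psi_2,h_n\psi_3)$ has scaled gradient $\nabla_h\mat\psi$ essentially built into it: a direct computation shows that $\nabla(\psi_1,\psi_2,h_n\psi_3)$ is controlled by $\nabla_h\mat\psi$ componentwise (the third row picks up $h_n\partial_\alpha\psi_3$ for $\alpha=1,2$, which is bounded by $|\nabla_h\mat\psi|$ since $h_n\le 1$, and $\partial_3(h_n\psi_3)=h_n\partial_3\psi_3$, again bounded by the scaled derivative). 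Hence $\|\sym\nabla(\psi_1,\psi_2,h_n\psi_3)\|_{L^2}\le C\|\sym\nabla_h\mat\psi\|_{L^2}$, perhaps after also invoking \eqref{prvaKorn} to absorb the cross terms coming from $\vect r\wedge x_3\vect e_3$.

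The second step is to apply the standard Korn inequality (first inequality of Theorem \ref{app:thm.korn} with $p=2$ on the domain $\omega\times I$) to the vector field $(\psi_1,\psi_2,h_n\psi_3)\in H^1(\omega\times I,\R^3)$, which yields
\begin{equation*}
\|(\psi_1,\psi_2,h_n\psi_3)\|_{H^1}^2 \le C_K\left(\|(\psi_1,\psi_2,h_n\psi_3)\|_{L^2}^2 + \|\sym\nabla(\psi_1,\psi_2,h_n\psi_3)\|_{L^2}^2\right)\,.
\end{equation*}
Combining this with the bound from Step 1 gives the first claimed inequality with $C_T = C_T(\omega)$; note the constant is independent of $h_n$ precisely because the estimate $\|\sym\nabla(\psi_1,\psi_2,h_n\psi_3)\|_{L^2}\le C(\omega)\|\sym\nabla_h\mat\psi\|_{L^2}$ is $h_n$-uniform. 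For the boundary version, I would repeat the argument verbatim but invoke instead the second inequality \eqref{Kornwithbc} of Theorem \ref{app:thm.korn} with $\Gamma\subset\partial\Omega$, replacing the $L^2(\Omega)$ norm of the field by its $L^2(\Gamma)$ trace norm; nothing else changes.

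The only mildly delicate point—and thus the main obstacle—is making the $h_n$-uniformity transparent when bounding $\|\sym\nabla(\psi_1,\psi_2,h_n\psi_3)\|_{L^2}$ by $\|\sym\nabla_h\mat\psi\|_{L^2}$. The symmetrized gradient of $(\psi_1,\psi_2,h_n\psi_3)$ has entries such as $\tfrac12(h_n\partial_\alpha\psi_3 + \partial_3\psi_\alpha)$ for $\alpha=1,2$, whereas $\sym\nabla_h\mat\psi$ has the entry $\tfrac12(\partial_\alpha\psi_3 + \tfrac1{h_n}\partial_3\psi_\alpha)$; these are not literally comparable entry-by-entry without using the Griso splitting. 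The clean route is to apply \eqref{prvaKorn} to each Griso component: $\hat{\mat\psi}$ and $\vect r\wedge x_3\vect e_3$ contribute the "Kirchhoff–Love" part whose rescaled symmetrized gradient is exactly controlled by $\|\sym\nabla_h(\hat{\mat\psi}+\vect r\wedge x_3\vect e_3)\|_{L^2}$, while $\bar{\mat\psi}$ contributes $\nabla_h\bar{\mat\psi}$ and $\tfrac1{h^2}\|\bar{\mat\psi}^h\|^2$, both bounded by $\|\sym\nabla_h\mat\psi\|_{L^2}^2$. Summing these and using Poincaré to control the lower-order $L^2$ terms finishes the estimate with a constant depending only on $\omega$ (and $\Gamma$ in the boundary case).
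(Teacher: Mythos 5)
Your high-level plan---apply the classical Korn inequality on the fixed domain $\omega\times I$ to the rescaled field $(\psi_1,\psi_2,h_n\psi_3)$---is exactly the right move, and this is what the paper's one-line remark is pointing at. However, your proposal is built around a misconception that leads you to an unnecessary detour through Griso's decomposition. You write that the entries $\tfrac12(h_n\partial_\alpha\psi_3 + \partial_3\psi_\alpha)$ and $\tfrac12(\partial_\alpha\psi_3 + \tfrac1{h_n}\partial_3\psi_\alpha)$ ``are not literally comparable entry-by-entry,'' but they are: the first equals $h_n$ times the second. In fact, setting $\vect\phi=(\psi_1,\psi_2,h_n\psi_3)$, a direct computation gives
\begin{equation*}
(\sym\nabla\vect\phi)_{\alpha\beta}=(\sym\nabla_{h_n}\mat\psi)_{\alpha\beta},\quad
(\sym\nabla\vect\phi)_{\alpha 3}=h_n(\sym\nabla_{h_n}\mat\psi)_{\alpha 3},\quad
(\sym\nabla\vect\phi)_{33}=h_n^2(\sym\nabla_{h_n}\mat\psi)_{33},
\end{equation*}
for $\alpha,\beta\in\{1,2\}$, hence the pointwise bound $|\sym\nabla\vect\phi|\leq|\sym\nabla_{h_n}\mat\psi|$ holds for $h_n\leq 1$ with no auxiliary lemma at all. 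Once you have $\|\sym\nabla\vect\phi\|_{L^2}\leq\|\sym\nabla_{h_n}\mat\psi\|_{L^2}$, both inequalities of the corollary follow immediately from the two versions in Theorem \ref{app:thm.korn} applied to $\vect\phi$, with $h_n$-independent constants. The Griso-based route you sketch in your last paragraph is not wrong in spirit, but it is vague about how to reassemble $\|\vect\phi\|_{H^1}$ from bounds on the individual Griso components, and it trades a one-line algebraic observation for a multi-step argument; you should simply drop it.
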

The following lemma tells us additional information on the weak limit of sequence that has bounded symmetrized scaled gradients. 
\begin{lemma}\label{app:lem.limsup}
Let $\omega\subset \R^2$ be a bounded set  with Lipschitz boundary and $(h_n)_n$ monotonically decreasing
to zero sequence of positive reals. Let $(\mat{\psi}^{h_n})_{n}\subset H^1(\omega\times I,\R^3)$  which for all $n\in\N$
equals zero on $\Gamma_d\times I\subset\pa\omega\times I$ of strictly positive surface measure,  and
\begin{equation*}
\limsup_{n\to\infty}\|\sym \nabla_{h_n}\mat{\psi}^{h_n}\|_{L^2} < \infty\,,
\end{equation*}
then there exists a subsequence (still denoted by $(h_n)_n$) for which it holds
\begin{equation*}
\sym\nabla_{h_n}\mat\psi^{h_n} = \imath(-x_3\nabla'^2 v + \sym\nabla' \vect{w}) + \sym\nabla_{h_n}\bar{\mat\psi}^{h_n},
\end{equation*}
for some 
$v\in H_{\Gamma_d}^2(\omega)$, $\vect{w}\in H_{\Gamma_d}^1(\omega,\R^2)$ and a sequence $(\bar{\mat\psi}^{h_n})_{n}\subset H^1(\omega\times I,\R^3)$
satisfies $\bar{\mat\psi}^{h_n} = 0$ on $\Gamma_d\times I$ and $(\bar{\psi}_1^{h_n},\bar\psi_2^{h_n},h_n\bar\psi_3^{h_n})\to 0$ 
in the $L^2$-norm. Furthermore,
\begin{equation*}
\|v\|_{L^2}^2 + \|\vect{w}\|_{L^2}^2 \leq \limsup_{n\to\infty}\|(\psi_1^{h_n},\psi_2^{h_n},h_n\psi_3^{h_n})\|_{L^2}^2\,.
\end{equation*}
\end{lemma}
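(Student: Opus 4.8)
The plan is to apply the Griso decomposition of Theorem \ref{aux:thm.griso} to each $\mat\psi^{h_n}$, feed the resulting identity into the a priori estimate \eqref{prvaKorn}, and then extract the limiting Kirchhoff--Love fields $\vect w$ and $v$ from the pieces of the decomposition. I write $\mat\psi^{h_n}=\hat{\mat\psi}^{h_n}(x')+\vect r^{h_n}(x')\wedge x_3\vect e_3+\vect z^{h_n}(x)$ as in \eqref{griso1}--\eqref{griso2}, with remainder $\vect z^{h_n}$; since $\mat\psi^{h_n}=0$ on $\Gamma_d\times I$, the formulas \eqref{griso2} immediately give $\hat{\mat\psi}^{h_n}=0$ and $\vect r^{h_n}=0$ on $\Gamma_d$. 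I then expand the in-plane block of $\sym\nabla_{h_n}(\hat{\mat\psi}^{h_n}+\vect r^{h_n}\wedge x_3\vect e_3)$ as $\sym\nabla'(\hat\psi_1^{h_n},\hat\psi_2^{h_n})+x_3\,\sym\nabla'(r_2^{h_n},-r_1^{h_n})$ and integrate over $x_3\in I$ (the cross term drops since $\int_I x_3\,dx_3=0$, $|I|=1$); together with \eqref{prvaKorn} and $\limsup_n\|\sym\nabla_{h_n}\mat\psi^{h_n}\|_{L^2}<\infty$ this bounds $\|\sym\nabla'(\hat\psi_1^{h_n},\hat\psi_2^{h_n})\|_{L^2(\omega)}$ and $\|\sym\nabla'\vect r^{h_n}\|_{L^2(\omega)}$, so the Korn inequality with boundary data (Theorem \ref{app:thm.korn}) on $\omega$ makes $(\hat\psi_1^{h_n},\hat\psi_2^{h_n})_n$ and $(\vect r^{h_n})_n$ bounded in $H^1(\omega,\R^2)$. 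The $\alpha3$-block equals $\tfrac12(\tfrac1{h_n}r_2^{h_n}+\partial_1\hat\psi_3^{h_n})$, resp.\ $\tfrac12(-\tfrac1{h_n}r_1^{h_n}+\partial_2\hat\psi_3^{h_n})$, so \eqref{prvaKorn} yields $\|r_2^{h_n}+h_n\partial_1\hat\psi_3^{h_n}\|_{L^2(\omega)}+\|r_1^{h_n}-h_n\partial_2\hat\psi_3^{h_n}\|_{L^2(\omega)}\le C h_n$; combined with the bound on $\vect r^{h_n}$ this shows $(h_n\nabla'\hat\psi_3^{h_n})_n$ is bounded in $L^2(\omega)$, and since $h_n\hat\psi_3^{h_n}=0$ on $\Gamma_d$, Poincaré gives $(h_n\hat\psi_3^{h_n})_n$ bounded in $H^1(\omega)$. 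Finally, $\|\nabla_{h_n}\vect z^{h_n}\|_{L^2}$ bounded forces $\|\partial_3 z_i^{h_n}\|_{L^2}\le Ch_n$, and since $\int_I\vect z^{h_n}\,dx_3=0$, the Poincaré inequality in $x_3$ gives $\|\vect z^{h_n}\|_{L^2(\omega\times I)}\le C h_n\to 0$.

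Passing to a subsequence, still denoted $(h_n)_n$, the compact embedding $H^1(\omega)\hookrightarrow\hookrightarrow L^2(\omega)$ (valid on the bounded Lipschitz set $\omega\subset\R^2$) yields strong $L^2(\omega)$ limits $(\hat\psi_1^{h_n},\hat\psi_2^{h_n})\to\vect w$, $\vect r^{h_n}\to\vect\rho$ and $h_n\hat\psi_3^{h_n}\to v$, with $\vect w,\vect\rho\in H^1(\omega,\R^2)$ and $v\in H^1(\omega)$ the weak $H^1$-limits; weak continuity of the trace gives $\vect w=\vect\rho=v=0$ on $\Gamma_d$, so $\vect w\in H^1_{\Gamma_d}(\omega,\R^2)$. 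Passing to the limit in the two strong relations of the previous paragraph (using $h_n\nabla'\hat\psi_3^{h_n}\rightharpoonup\nabla'v$ weakly in $L^2$) gives $\rho_2=-\partial_1 v$ and $\rho_1=\partial_2 v$; since $\vect\rho\in H^1(\omega)$ this says $\nabla'v=(-\rho_2,\rho_1)\in H^1(\omega)$, i.e.\ $v\in H^2(\omega)$, and $\nabla'v=0$ on $\Gamma_d$, hence $v\in H^2_{\Gamma_d}(\omega)$; in particular $(r_2^{h_n},-r_1^{h_n})\to-\nabla'v$ strongly in $L^2(\omega)$.

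To finish, I set $\vect\ell^{h_n}(x)=\bigl(\vect w(x')-x_3\nabla'v(x'),\,v(x')/h_n\bigr)$ and define the corrector $\bar{\mat\psi}^{h_n}:=\mat\psi^{h_n}-\vect\ell^{h_n}$. A direct computation gives $\sym\nabla_{h_n}\vect\ell^{h_n}=\imath(\sym\nabla'\vect w-x_3\nabla'^2 v)$ — the $\alpha3$ and $33$ components cancel because $v$ does not depend on $x_3$ — which is precisely the claimed splitting of $\sym\nabla_{h_n}\mat\psi^{h_n}$; moreover $\bar{\mat\psi}^{h_n}=0$ on $\Gamma_d\times I$ because $\mat\psi^{h_n}=0$ there and $\vect w=v=\nabla'v=0$ on $\Gamma_d$. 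Recombining the Griso pieces and using the convergences of the previous paragraph together with $\vect z^{h_n}\to0$, one gets $(\psi_1^{h_n},\psi_2^{h_n})\to\vect w-x_3\nabla'v$ and $h_n\psi_3^{h_n}\to v$ strongly in $L^2(\omega\times I)$, hence $(\bar\psi_1^{h_n},\bar\psi_2^{h_n},h_n\bar\psi_3^{h_n})\to 0$ in $L^2$. Finally, strong convergence gives $\|(\psi_1^{h_n},\psi_2^{h_n},h_n\psi_3^{h_n})\|_{L^2}^2\to\|\vect w\|_{L^2(\omega)}^2+\tfrac1{12}\|\nabla'v\|_{L^2(\omega)}^2+\|v\|_{L^2(\omega)}^2$ (again using $\int_I x_3\,dx_3=0$, $|I|=1$), which is $\ge\|\vect w\|_{L^2(\omega)}^2+\|v\|_{L^2(\omega)}^2$; since the $\limsup$ along the original sequence dominates this subsequential limit, the stated estimate follows.

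I expect the middle paragraph to be the crux: correctly reading off $v$ from the coupling between $\vect r^{h_n}$ and $\hat\psi_3^{h_n}$ encoded in the $\alpha3$-block of \eqref{prvaKorn}, establishing thereby that $v\in H^2_{\Gamma_d}(\omega)$, and making sure that all the convergences feeding into $\bar{\mat\psi}^{h_n}\to 0$ and into the norm identity are \emph{strong} in $L^2$ — which is exactly what the compactness of $H^1(\omega)\hookrightarrow L^2(\omega)$ on the two-dimensional mid-surface delivers. The rest is bookkeeping of the Griso pieces and elementary integrations in $x_3$.
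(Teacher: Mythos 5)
Your proof is correct and follows essentially the same route as the paper's: Griso decomposition, the a priori estimate \eqref{prvaKorn}/\eqref{3.eq:compact_est}, Korn's inequality with boundary data plus compact embedding to extract the limits $\vect w$, $\vect\rho$, $v$, identification of $\vect\rho$ with $-(\nabla'v)^\perp$ via the vanishing $\alpha 3$-block, and then the explicit subtraction defining $\bar{\mat\psi}^{h_n}$. The only substantive difference is that you carry the bookkeeping to the end and actually verify the norm inequality $\|v\|_{L^2}^2+\|\vect w\|_{L^2}^2\le\limsup_n\|(\psi_1^{h_n},\psi_2^{h_n},h_n\psi_3^{h_n})\|_{L^2}^2$ via the strong $L^2$ convergences and the elementary $x_3$-integrals, whereas the paper stops at ``it is easy to check'' for the corrector convergence and leaves the final estimate implicit — so your write-up is, if anything, a bit more complete.
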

\begin{proof}
Applying the Griso's decomopsition (Theorem \ref{aux:thm.griso}) on each $\vect \psi^{h_n}$, we find
\begin{equation*}
\vect \psi^{h_n}(x) = \hat{\vect{\psi}}^{h_n}(x') + \vect r^{h_n}(x')\wedge x_3\vect e_3 
+ \bar{\vect{u}}^{h_n}(x)\,,
\end{equation*}
with sequences 
$\hat{\vect{\psi}}^{h_n}(x') = \int_I \vect{\psi}^{h_n}(x',x_3)\dd x_3$,
$\vect{r}^{h_n}(x')= \frac32\int_Ix_3\vect{e}_3\wedge\vect{\psi}^{h_n}(x',x_3)\dd x_3$ and $\bar{\vect{\psi}}^{h_n}$
satisfying the following a priori estimate
\begin{align}
&\|\sym\nabla'(\hat{\psi}_1^{h_n},\hat \psi_2^{h_n})\|_{L^2}^2 + \frac{1}{12}\|\sym\nabla'(r_2^{h_n},-r_1^{h_n})\|_{L^2}^2
+ \frac{1}{2h_n^2}\|\pa_1(h_n\hat{\psi}_3^{h_n}) + r_2^{h_n}\|_{L^2}^2 \label{3.eq:compact_est}\\ \nonumber
&+ \frac{1}{2h_n^2}\|\pa_2(h_n\hat{\psi}_3^{h_n}) - r_1^{h_n}\|_{L^2}^2\,
+ \|\nabla_{h_n}\bar{\vect{\psi}}^{h_n}\|_{L^2}^2 + \frac{1}{h_n^2}\|\bar{\vect{\psi}}^{h_n}\|_{L^2}^2
\leq C\|\sym\nabla_{h_n}\vect{\psi}^{h_n}\|_{L^2}^2 \leq C\,.
\end{align}
Together with the Korn's inequality with boundary condition (cf.~Theorem \ref{app:thm.korn}), 
the above estimate implies (on a subsequence)
\begin{align*}
(\hat{\psi}_1^{h_n},\hat \psi_2^{h_n}) \rightharpoonup \vect w\quad\text{weakly in }H^1(\omega,\R^2)\quad \text{and}
\quad \vect r^{h_n} \rightharpoonup \vect r\quad \text{weakly in }H^1(\omega,\R^2)\,.
\end{align*}
Furthermore, using the triangle inequality and estimate (\ref{3.eq:compact_est}) 
\begin{equation*}
\pa_1(h_n\hat{\vect{\psi}}^{h_n}_3) \to -r_2\quad \text{and}\quad \pa_2(h_n\hat{\vect{\psi}}^{h_n}_3) \to r_1 
\quad\text{strongly in }L^2(\omega)\,.
\end{equation*}
By the compactness of the trace operator, $\vect r\in H^1_{\Gamma_d}(\omega,\R^2)$, thus, by the Korn's inequality, there 
exists $v\in H^2_{\Gamma_d}(\omega)$ such that
\begin{equation*}
h_n\hat{\vect{\psi}}^{h_n}_3 \to v \quad\text{ strongly in }H^1(\omega)\,,\quad \text{and}\quad 
r_1 = \pa_2 v\,,\quad r_2 = -\pa_1v\,.
\end{equation*}
Defining 
\begin{equation*}
{\bar{\vect{\psi}}}^{h_n}(x) = \vect{\psi}^{h_n}(x) - \left(\begin{array}{c}\vect{w}(x') \\ \dfrac{v(x')}{h_n}\end{array}\right) 
+ x_3\left(\begin{array}{c} \nabla'v(x') \\ 0 \end{array} \right)\,,
\end{equation*}
it is easy to check $(\bar\psi_1^{h_n},\bar\psi_2^{h_n},h_n\bar\psi_3^{h_n})\to 0$ in the $L^2$-norm, which finishes the proof. 
\end{proof} 
The following lemma is given in \cite[Proposition 3.3]{Vel14a}. It is a consequence of Theorem \ref{aux:thm.griso}. It tells us how we can further decompose the sequence of deformations that has bounded symmetrized scaled gradient. 
\begin{lemma}\label{igor2}
	Let $ A \subset \R^2$ with $C^{1,1}$ boundary. Denote by $\{A_i\}_{i=1,\dots,k}$ the connected components of $A$.
		Let $(\vect\psi^{h_n})_n \subset H^1(A \times I,\R^3)$ be such that
		\begin{eqnarray*}
			\label{eq:pp1}&& (\psi_1^{h_n},\psi_2^{h_n},h_n\psi_3^{{h_n}}) \to 0, \text{ strongly in } L^2,\quad \forall n \in \N,\ \forall i=1, \dots,k \,,\ \int_{A_i} \psi_3^{h_n}=0\,, \\ 
			&& \limsup_{n \to \infty} \|\sym \nabla_{h_n} \vect \psi^{h_n}\|_{L^2} \leq M<\infty\,.
		\end{eqnarray*}
		Then there exist $(\varphi^{h_n})_{n \in \N} \subset H^2(A)$, $(\tilde{\vect \psi}^{h_n})_{n \in \N} \subset H^1(A \times I ,\R^3)$
		such that
		$$ \sym \nabla_{h_n}\vect \psi^{h_n}=-x_3\iota(\sym \nabla'^2 \varphi^{h_n})+\sym \nabla_{h_n} \tilde{\vect \psi}^{h_n}+o^{h_n},$$
		where $o^{h_n} \in L^2(A \times I,\R^{3 \times 3})$ is such that $o^{h_n} \to 0$, strongly in $L^2$, and the following properties hold
		\begin{eqnarray*}
			&&\label{igor10} \lim_{n \to \infty} \left(\|\varphi^{h_n}\|_{H^1}+\|\tilde{\vect \psi}^{h_n}\|_{L^2} \right)=0\,,\\
			&& \label{igor11} \limsup_{n \to \infty} \left( \| \varphi^{h_n} \|_{H^2}+\|\nabla_{h_n} \tilde{\vect \psi}^{h_n}\|_{L^2} \right)\leq C(A) M\,.
		\end{eqnarray*}

\end{lemma}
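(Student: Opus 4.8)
The plan is to sharpen the Griso decomposition of $\vect\psi^{h_n}$ (Theorem~\ref{aux:thm.griso}) so as to isolate, inside its symmetrized scaled gradient, a genuine Hessian term $-x_3\imath(\sym\nabla'^2\varphi^{h_n})$, collecting everything else into a corrector $\tilde{\vect\psi}^{h_n}$ whose rescaled components vanish in $L^2$ and whose scaled gradient stays bounded. Applying Theorem~\ref{aux:thm.griso} gives $\vect\psi^{h_n}=\hat{\vect\psi}^{h_n}(x')+\vect r^{h_n}(x')\wedge x_3\vect e_3+\bar{\vect\psi}^{h_n}(x)$ together with the estimate \eqref{prvaKorn}. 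Integrating in $x_3$ and using the hypotheses, the pairs $(\hat\psi_1^{h_n},\hat\psi_2^{h_n})$ and $(r_1^{h_n},r_2^{h_n})$ converge to $0$ strongly in $L^2(A)$; by \eqref{prvaKorn} their symmetrized gradients, as well as $\|\nabla_{h_n}\bar{\vect\psi}^{h_n}\|_{L^2}$, are $\le C(A)M$ and $\|\bar{\vect\psi}^{h_n}\|_{L^2}=O(h_n)$, so by Korn's inequality (Theorem~\ref{app:thm.korn}) the two pairs are bounded in $H^1(A)$; finally \eqref{prvaKorn} yields $\vect e^{h_n}:=\nabla'(h_n\hat\psi_3^{h_n})-\vect p^{h_n}$ with $\|\vect e^{h_n}\|_{L^2}\le C(A)Mh_n$, where $\vect p^{h_n}:=(-r_2^{h_n},r_1^{h_n})$.

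The core of the argument is the construction of $\varphi^{h_n}\in H^2(A)$. Its natural candidate $h_n\hat\psi_3^{h_n}$ is only $H^1$, but its gradient is $O(h_n)$-close to $\vect p^{h_n}$, which lies in $H^1(A,\R^2)$ with $\|\sym\nabla'\vect p^{h_n}\|_{L^2}\le C(A)M$; this is exploited to gain one derivative. Since the curl of a gradient vanishes, $\operatorname{curl}\vect p^{h_n}=-\operatorname{curl}\vect e^{h_n}$ has $H^{-1}(A)$-norm $\le C(A)Mh_n$. Let $\eta^{h_n}\in H^1_0(A)$ solve $-\Delta\eta^{h_n}=\operatorname{curl}\vect p^{h_n}$; then $\|\eta^{h_n}\|_{H^1}\le C(A)Mh_n$ and, by elliptic regularity on the $C^{1,1}$ set $A$, $\|\eta^{h_n}\|_{H^2}\le C\|\operatorname{curl}\vect p^{h_n}\|_{L^2}\le C(A)M$. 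Put $\vect q^{h_n}:=\nabla'^{\perp}\eta^{h_n}$; then $\vect p^{h_n}-\vect q^{h_n}\in H^1(A,\R^2)$ is curl-free, hence — on the components $A_i$, which we may take simply connected after subtracting an $O(h_n)$-small harmonic field — it equals $\nabla'\varphi^{h_n}$ for some $\varphi^{h_n}\in H^2(A)$ normalized to have the same average as $h_n\hat\psi_3^{h_n}$ on each $A_i$. Since $\nabla'\varphi^{h_n}\to0$ in $L^2$ and the averages are fixed, Poincaré's inequality on each $A_i$ gives $\varphi^{h_n}\to0$ in $H^1(A)$; and, the Hessian being symmetric, $\|\nabla'^2\varphi^{h_n}\|_{L^2}=\|\sym\nabla'(\vect p^{h_n}-\vect q^{h_n})\|_{L^2}\le\|\sym\nabla'\vect p^{h_n}\|_{L^2}+\|\nabla'^2\eta^{h_n}\|_{L^2}\le C(A)M$.

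Set $\tilde{\vect\psi}^{h_n}:=\vect\psi^{h_n}-(0,0,\varphi^{h_n}/h_n)^{\tau}+x_3(\nabla'\varphi^{h_n},0)^{\tau}$. The symmetrized scaled gradients of the two added terms have opposite off-diagonal $(\cdot,3)$-entries $\mp\tfrac1{2h_n}\nabla'\varphi^{h_n}$, which cancel, so $\sym\nabla_{h_n}\tilde{\vect\psi}^{h_n}=\sym\nabla_{h_n}\vect\psi^{h_n}+x_3\imath(\nabla'^2\varphi^{h_n})$, i.e.\ the claimed identity holds with $o^{h_n}\equiv0$ (a mollification-based variant of the construction would instead produce a vanishing $o^{h_n}$ absorbing a compatibility error). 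Rewriting $\tilde{\vect\psi}^{h_n}$ via the Griso decomposition and using $\nabla'\varphi^{h_n}=\vect p^{h_n}-\vect q^{h_n}$ one gets
\[
\tilde{\vect\psi}^{h_n}=\bar{\vect\psi}^{h_n}+(\hat\psi_1^{h_n},\hat\psi_2^{h_n},0)^{\tau}+\bigl(-x_3q_1^{h_n},-x_3q_2^{h_n},\tfrac1{h_n}(h_n\hat\psi_3^{h_n}-\varphi^{h_n})\bigr)^{\tau}.
\]
The first two components tend to $0$ in $L^2$; the third, multiplied by $h_n$, equals $(h_n\hat\psi_3^{h_n}-\varphi^{h_n})+h_n\bar\psi_3^{h_n}$, and $\nabla'(h_n\hat\psi_3^{h_n}-\varphi^{h_n})=\vect e^{h_n}+\vect q^{h_n}$ has $L^2$-norm $O(h_n)$, so by Poincaré on the $A_i$ (using $\int_{A_i}\psi_3^{h_n}=0$ to fix the constant) it tends to $0$; thus $(\tilde\psi_1^{h_n},\tilde\psi_2^{h_n},h_n\tilde\psi_3^{h_n})\to0$ in $L^2$. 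The bound $\limsup_n\|\nabla_{h_n}\tilde{\vect\psi}^{h_n}\|_{L^2}\le C(A)M$ then follows termwise: $\|\nabla_{h_n}\bar{\vect\psi}^{h_n}\|_{L^2}$ and $\|\nabla'(\hat\psi_1^{h_n},\hat\psi_2^{h_n})\|_{L^2}$ are $\le C(A)M$ as above, while $\|\nabla'\vect q^{h_n}\|_{L^2}=\|\nabla'^2\eta^{h_n}\|_{L^2}$, $h_n^{-1}\|\vect q^{h_n}\|_{L^2}$ and $h_n^{-1}\|\nabla'(h_n\hat\psi_3^{h_n}-\varphi^{h_n})\|_{L^2}$ are all $\le C(A)M$ by the previous estimates.

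The hard part is precisely the second step: $h_n\hat\psi_3^{h_n}$ is a priori only $H^1$, and one must upgrade it to an $H^2$ function whose Hessian is controlled by the \emph{symmetrized}-gradient bound $C(A)M$, which is strictly stronger than what a naive mollification delivers; this is why the ``almost a gradient'' structure of $\vect p^{h_n}$, the Hodge-type splitting and interior-plus-boundary elliptic regularity on the $C^{1,1}$ domain are all needed. The remaining points — the $O(h_n)$ harmonic correction on topologically nontrivial $A_i$, the choice of the additive constants of $\varphi^{h_n}$ via the zero-average hypothesis, and the bookkeeping that sends the off-diagonal shear terms into $\sym\nabla_{h_n}\tilde{\vect\psi}^{h_n}$ rather than into $o^{h_n}$ — are routine.
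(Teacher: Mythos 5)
Your construction via Griso followed by a Hodge-type splitting of $\vect p^{h_n}=(-r_2^{h_n},r_1^{h_n})$ is a reasonable route to produce an $H^2$ candidate $\varphi^{h_n}$, and the Hessian bound $\limsup_n\|\varphi^{h_n}\|_{H^2}\le C(A)M$ together with $\limsup_n\|\nabla_{h_n}\tilde{\vect\psi}^{h_n}\|_{L^2}\le C(A)M$ goes through (modulo a sign convention in $\nabla'^\perp$ and the harmonic correction on multiply connected $A_i$, which you handle adequately). However, there is a genuine gap in the verification of the first conclusion of the lemma. The lemma requires $\|\tilde{\vect\psi}^{h_n}\|_{L^2(A\times I,\R^3)}\to 0$, i.e.\ all three components of $\tilde{\vect\psi}^{h_n}$ must vanish in $L^2$; your proof, as you yourself record at the end of the third paragraph, only establishes $(\tilde\psi_1^{h_n},\tilde\psi_2^{h_n},h_n\tilde\psi_3^{h_n})\to 0$. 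The third component is $\tilde\psi_3^{h_n}=\frac{1}{h_n}(h_n\hat\psi_3^{h_n}-\varphi^{h_n})+\bar\psi_3^{h_n}$, and to make it vanish you need $\|h_n\hat\psi_3^{h_n}-\varphi^{h_n}\|_{L^2}=o(h_n)$, not merely $O(h_n)$.

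The Hodge projection cannot deliver this. You have $\nabla'(h_n\hat\psi_3^{h_n}-\varphi^{h_n})=\vect e^{h_n}+\vect q^{h_n}(+\vect h^{h_n})$, and each of $\vect e^{h_n}$, $\vect q^{h_n}$, $\vect h^{h_n}$ is controlled only by $C h_n\|\sym\nabla_{h_n}\vect\psi^{h_n}\|_{L^2}$ (indeed $\|\vect q\|_{L^2}\le C\|\operatorname{curl}\vect e\|_{H^{-1}}\le C\|\vect e\|_{L^2}$ and similarly for the harmonic part). Since $\|\sym\nabla_{h_n}\vect\psi^{h_n}\|_{L^2}$ is bounded by $M$ but in general does \emph{not} tend to zero, Poincar\'e gives only $\|h_n\hat\psi_3^{h_n}-\varphi^{h_n}\|_{L^2}\le C(A)M h_n+o(h_n)$, hence $\limsup_n\|\tilde\psi_3^{h_n}\|_{L^2}\le C(A)M$, which can be strictly positive. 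Equivalently: the Neumann/Hodge potential $\alpha$ of $\vect p^{h_n}$ satisfies $\nabla'(h_n\hat\psi_3^{h_n}-\alpha)=\Pi_{\mathrm{grad}}\vect e^{h_n}$, and the orthogonal projection sees nothing beyond $\|\vect e^{h_n}\|_{L^2}=O(h_n)$. (A concrete obstruction: $\vect\psi^{h_n}=(x_3 h_n f(x'),0,0)$ gives $h_n\hat\psi_3^{h_n}=0$ so the correct choice is $\varphi^{h_n}=0$, yet the Hodge potential of $\vect p^{h_n}=(-h_n f,0)$ is $\Theta(h_n)$ and produces $\tilde\psi_3^{h_n}=\Theta(1)$.)

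The missing ingredient is that $\|\nabla'(h_n\hat\psi_3^{h_n})\|_{L^2}=\|\vect p^{h_n}+\vect e^{h_n}\|_{L^2}\to 0$, because $\vect r^{h_n}\to 0$ in $L^2(A,\R^2)$ (it is a first moment in $x_3$ of the vanishing in-plane components) and $\|\vect e^{h_n}\|_{L^2}=O(h_n)$. A construction that smooths $h_n\hat\psi_3^{h_n}$ \emph{itself} at scale $h_n$ (inner mollification or an equivalent truncation/regularization adapted to the $C^{1,1}$ boundary, splitting $\nabla'(h_n\hat\psi_3^{h_n})=\vect p^{h_n}+\vect e^{h_n}$ to bound the Hessian as you do) exploits this and yields $\|h_n\hat\psi_3^{h_n}-\varphi^{h_n}\|_{L^2}\le C h_n\|\nabla'(h_n\hat\psi_3^{h_n})\|_{L^2}=o(h_n)$, while keeping $\|\nabla'^2\varphi^{h_n}\|_{L^2}\le C(\|\nabla'\vect p^{h_n}\|_{L^2}+h_n^{-1}\|\vect e^{h_n}\|_{L^2})\le C(A)M$. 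The Hodge projection of $\vect p^{h_n}$ alone, by contrast, is insensitive to this extra decay, which is precisely where your argument falls short of the stated conclusion.
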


The following lemma is given in \cite[Lemma 3.6]{Vel14a}. Although the claim can be put in more general form (since the argument is simple truncation), we state it only in the form we need here. 
\begin{lemma} \label{nulizacija}
	Let $p\geq 1$ and $A\subset \R^2$ be an open, bounded set.
Let $(\vect\psi^{h_n})_{n \in \N} \subset W^{1,p}(A \times I,\R^3)$ and $(\varphi^{h_n})_{n \in \N} \subset W^{2,p}(A)$. Suppose that
	$\left(|\nabla'^2 \varphi^{h_n}|^p\right)_{n\in\N}$ and $\left(| \nabla_{h_n} \vect\psi^{h_n} |^p\right)_{n \in \N}$ are equi-integrable and
	\begin{equation*}
	\lim_{n \to\infty} \left(  \| \varphi^{h_n} \|_{W^{1,p}}+\|\vect \psi^{h_n} \|_{L^p} \right)=0.
	\end{equation*}
	Then there exist sequences $(\tilde{\varphi}^{h_n})_{n \in \N} \subset W^{2,p}(A)$, $(\tilde{\psi}^{h_n})_{n\in \N}\subset W^{1,p}(A \times I,\R^3)$ and a sequence of sets $(A_n)_{n\in \N}$  such that for each $n \in\N$, $A_n \ll A_{n+1} \ll A$ and $\cup_{n \in \N} A_n=A$ and
	\begin{enumerate}[(a)]
		\item $\tilde{\varphi}^{h_n}=0$, $\nabla' \tilde{\varphi}^{h_n}=0$  in a neighborhood of $\partial A$,  $\tilde{\vect \psi}^{h_n}=0$ in a neighborhood of $\partial A \times I$;
		\item $\tilde{\vect\psi}^{h_n}=\vect\psi^{h_n} \text{ on } A_n \times I,\ \tilde{\varphi}^{h_n}=\varphi^{h_n} \text{ on } A_n$;
		\item $ \|\tilde{\varphi}^{h_n}-\varphi^{h_n}\|_{W^{2,p}}\to0,\  \|\tilde{\vect\psi}^{h_n}-\vect\psi^{h_n}\|_{W^{1,p}}\to 0$, $\|\nabla_{h_n} \tilde{\vect\psi}^{h_n}-\nabla_{h_n}\vect \psi^{h_n}\|_{L^p} \to 0$, as $n \to \infty$.
	\end{enumerate}
\end{lemma}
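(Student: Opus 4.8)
The plan is a standard cut-off argument; the only point that needs care is the rate at which the cut-off sets are allowed to exhaust $A$. First I would fix, once and for all, an increasing chain of open sets $B_1\ll B_2\ll\cdots$ with $\bigcup_k B_k=A$, and for each $k\in\N$ a function $\eta_k\in C_c^\infty(B_{k+1})$ with $0\le\eta_k\le1$, $\eta_k\equiv1$ on $B_k$, and $\|\nabla'\eta_k\|_{L^\infty}+\|\nabla'^2\eta_k\|_{L^\infty}\le\Lambda_k$ for a finite constant $\Lambda_k$, which we may take non-decreasing in $k$ (replace it by $\max_{j\le k}\Lambda_j$). For a non-decreasing index sequence $k(n)\to\infty$, to be fixed at the very end, I would then set
\[
\tilde\varphi^{h_n}:=\eta_{k(n)}\varphi^{h_n},\qquad \tilde{\vect\psi}^{h_n}:=\eta_{k(n)}\vect\psi^{h_n},
\]
and take $A_n$ to be an open set with $B_{k(n)}\subseteq A_n$ on which $\eta_{k(n)}\equiv1$; a harmless shrinking of the $A_n$'s inside $B_{k(n)+1}$ (which affects none of the estimates below) additionally guarantees $A_n\ll A_{n+1}\ll A$ and $\bigcup_n A_n=A$. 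Since $\eta_{k(n)}$ depends only on $x'$, the field $\nabla_{h_n}\eta_{k(n)}=(\nabla'\eta_{k(n)},0)$ carries no factor $h_n^{-1}$; together with $\operatorname{supp}\eta_{k(n)}\subset B_{k(n)+1}\ll A$ and $\eta_{k(n)}\equiv1$ on $A_n$ this makes properties (a) and (b) immediate.

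For (c) I would expand the differences by the Leibniz rule and sort the error into three groups. From $\tilde\varphi^{h_n}-\varphi^{h_n}=(\eta_{k(n)}-1)\varphi^{h_n}$, the contributions to its $W^{2,p}$–norm are: (i) the term $(\eta_{k(n)}-1)\nabla'^2\varphi^{h_n}$, which is supported in $A\setminus A_n$ and hence has $L^p$–norm at most $\|\nabla'^2\varphi^{h_n}\|_{L^p(A\setminus A_n)}\to0$, because $|A\setminus A_n|\to0$ and $(|\nabla'^2\varphi^{h_n}|^p)_n$ is equi-integrable — this is precisely where the equi-integrability hypothesis enters; (ii) the lower-order pieces $(\eta_{k(n)}-1)\varphi^{h_n}$ and $(\eta_{k(n)}-1)\nabla'\varphi^{h_n}$, bounded by $\|\varphi^{h_n}\|_{W^{1,p}}\to0$; (iii) the cut-off pieces $\nabla'\eta_{k(n)}\otimes\nabla'\varphi^{h_n}$, $\varphi^{h_n}\nabla'\eta_{k(n)}$ and $\varphi^{h_n}\nabla'^2\eta_{k(n)}$, all bounded by $C\,\Lambda_{k(n)}\|\varphi^{h_n}\|_{W^{1,p}}$. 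The analysis of $\tilde{\vect\psi}^{h_n}-\vect\psi^{h_n}$ in $W^{1,p}$ and of
\[
\nabla_{h_n}\tilde{\vect\psi}^{h_n}-\nabla_{h_n}\vect\psi^{h_n}=(\eta_{k(n)}-1)\nabla_{h_n}\vect\psi^{h_n}+(\nabla'\eta_{k(n)},0)\otimes\vect\psi^{h_n}
\]
in $L^p$ is entirely parallel: the first summand is controlled by the equi-integrability of $(|\nabla_{h_n}\vect\psi^{h_n}|^p)_n$ on the set $(A\setminus A_n)\times I$ of vanishing measure, the second by $\Lambda_{k(n)}\|\vect\psi^{h_n}\|_{L^p(A\times I)}$; for the unscaled gradient one additionally uses $\|\pa_3\vect\psi^{h_n}\|_{L^p}\le h_n\|\nabla_{h_n}\vect\psi^{h_n}\|_{L^p}$, which shows $(|\nabla\vect\psi^{h_n}|^p)_n$ is equi-integrable as well.

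Finally I would choose $k(n)$. Every group-(i) and group-(ii) estimate above requires only $k(n)\to\infty$, while the group-(iii) estimates require $\Lambda_{k(n)}$ to grow slowly relative to $\delta_n:=\|\varphi^{h_n}\|_{W^{1,p}}+\|\vect\psi^{h_n}\|_{L^p}\to0$. Passing to the non-increasing majorant $\hat\delta_n:=\sup_{m\ge n}\delta_m\to0$ and setting $k(n):=\min\{n,\ \max\{k\in\N:\Lambda_k\le\hat\delta_n^{-1/2}\}\}$ does the job: $k(n)$ is non-decreasing with $k(n)\to\infty$, and $\Lambda_{k(n)}\le\hat\delta_n^{-1/2}$ forces $\Lambda_{k(n)}\delta_n\le\hat\delta_n^{1/2}\to0$, so the group-(iii) terms vanish. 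Assembling the three groups yields (c), which finishes the proof. The only step that is anything beyond bookkeeping — hence the one I would flag as the crux — is exactly this last balancing of the two competing requirements on $k(n)$ (a one-line diagonal selection), so I do not anticipate a genuine obstacle.
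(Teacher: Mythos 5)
Your proof is correct: the cutoff functions $\eta_{k(n)}$ supported in an exhausting chain, the split of the Leibniz error into an equi-integrability piece on $A\setminus A_n$ and a piece controlled by $\Lambda_{k(n)}\delta_n$, and the diagonal choice of $k(n)$ balancing these two requirements is exactly the standard argument. The paper itself does not reprove the lemma but refers to \cite[Lemma 3.6]{Vel14a}, whose proof is this same truncation-plus-diagonalization scheme, so there is nothing substantive to contrast.
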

The following two lemmas are given in \cite{FoMuPe} and \cite{BraZep}. The proof of the second claim in the first lemma is given in \cite[Proposition A.5]{Vel14a} as an adaptation
of the result given in \cite{FoMuPe}. 
\begin{lemma} \label{ekvi1}
	Let $p>1$ and $A \subset \R^n$ be an open bounded set.
	\begin{enumerate}[(a)]                                                                    \item Let $( w^n)_{n \in \N}$  be a bounded sequence in $W^{1,p}(A)$. Then there exist a subsequence $( w^{n(k)})_{k \in \N}$
		and a sequence $( z^k)_{k \in \N} \subset W^{1,p}(A)$ such that
		\begin{equation*}\label{stefan1} |\{ z^k \neq w^{n(k)} \}| \to 0  ,
		\end{equation*}
		as $k \to \infty$ and $\big(|\nabla  z_k|^p \big)_{k \in \N}$ is equi-integrable. Each $z^k$ may be chosen to be
		Lipschitz function. If $ w^n \rightharpoonup  w$ weakly in $W^{1,p}$, then $z^k \rightharpoonup  w$ weakly in $W^{1,p}$.
		\item Let $( w^n)_{n \in \N}$  be a bounded sequence in $W^{2,p}(A)$. Then there exist a subsequence $( w^{n(k)})_{k \in \N}$
		and a sequence $( z^k)_{k \in \N} \subset W^{2,p}(A)$ such that
		\begin{equation*} \label{stefan2}
			|\{ z^k \neq  w^{n(k)} \}|                                                                                                                                                                                                        \to 0  ,
		\end{equation*}                                                                     as $k \to \infty$ and $\big(|\nabla^2  z^k|^p \big)_{k \in \N}$ is equi-integrable. Each  $ z^k$ may be chosen such
		that $ z^k \in W^{2,\infty}(S)$. If $ w^n \rightharpoonup  w$ weakly in $W^{2,p}$, then $ z^k \rightharpoonup  w$ weakly in $W^{2,p}$.
	\end{enumerate}
\end{lemma}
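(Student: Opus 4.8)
This is the classical decomposition lemma of Fonseca--M\"uller--Pedregal; the plan is to establish (a) by the Lipschitz truncation method and then to obtain (b) from it by replacing first gradients with second gradients and the McShane extension by a Whitney-type extension of the first-order jet. First, when a weak limit $w$ is prescribed I would reduce to $w=0$ by subtracting it (this is harmless, since $|\nabla w|^p$ is a single $L^1$ function, hence equi-integrable). After enlarging $A$ to a bounded Lipschitz domain and extending, I may assume that $(w^n)_n$ is bounded in $W^{1,p}(\R^n)$ with supports in a fixed bounded set. The engine is the maximal-function truncation (Acerbi--Fusco; Liu): for $\lambda>0$ the function $w^n$ is $C\lambda$-Lipschitz on $H_{n,\lambda}:=\{x\ :\ \mathcal M(|\nabla w^n|)(x)\le\lambda\}$, and the complement of $H_{n,\lambda}$ has measure at most $C\lambda^{-p}\|\nabla w^n\|_{L^p}^p$; a McShane extension then yields $z^{n,\lambda}\in W^{1,\infty}(\R^n)$ with $\mathrm{Lip}(z^{n,\lambda})\le C\lambda$, $\|z^{n,\lambda}\|_{W^{1,p}}\le C\|w^n\|_{W^{1,p}}$, $z^{n,\lambda}=w^n$ and $\nabla z^{n,\lambda}=\nabla w^n$ a.e.\ on $H_{n,\lambda}$, and $|\nabla z^{n,\lambda}|\le C\lambda$ everywhere.

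The core of the argument is the simultaneous choice of truncation levels $\lambda_n\uparrow\infty$ and of a subsequence. Passing to a subsequence, I would apply Chac\'on's biting lemma to obtain sets $S^n$ with $|S^n|\to0$ such that $(|\nabla w^n|^p\mathbbm{1}_{\{S^n\}^c})_n$ is equi-integrable, taken so that on $S^n$ the field $|\nabla w^n|$ carries a non-vanishing portion of its $L^p$-mass on a vanishingly small set, which forces $\mathrm{ess\,inf}_{S^n}\mathcal M(|\nabla w^n|)\to\infty$. I then pick $\lambda_n\uparrow\infty$ with $\lambda_n\le \mathrm{ess\,inf}_{S^n}\mathcal M(|\nabla w^n|)$ — so that $S^n\subset\{\mathcal M(|\nabla w^n|)>\lambda_n\}$, i.e.\ $H_{n,\lambda_n}\subset \{S^n\}^c$ — and, using a dyadic pigeonhole against the uniform bound $\|\mathcal M(|\nabla w^n|)\|_{L^p}^p\le C_p\|\nabla w^n\|_{L^p}^p\le C$ over a window of slowly growing width below that essential infimum, simultaneously with $\lambda_n^p\,|\{\mathcal M(|\nabla w^n|)>\lambda_n\}|\to0$. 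Along the resulting subsequence set $z^k:=z^{n(k),\lambda_{n(k)}}$. Then $|\{z^k\ne w^{n(k)}\}|\le C\lambda_{n(k)}^{-p}\to0$, and from the pointwise bound $|\nabla z^k|^p\le |\nabla w^{n(k)}|^p\mathbbm{1}_{H_{n(k),\lambda_{n(k)}}}+(C\lambda_{n(k)})^p\mathbbm{1}_{\{\mathcal M(|\nabla w^{n(k)}|)>\lambda_{n(k)}\}}$, the inclusion $H_{n(k),\lambda_{n(k)}}\subset \{S^{n(k)}\}^c$, the equi-integrability of $(|\nabla w^{n(k)}|^p\mathbbm{1}_{\{S^{n(k)}\}^c})_k$ and the fact that $(C\lambda_{n(k)})^p|\{\mathcal M(|\nabla w^{n(k)}|)>\lambda_{n(k)}\}|\to0$, one reads off that $(|\nabla z^k|^p)_k$ is equi-integrable. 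Finally, if $w^n\rightharpoonup w$ in $W^{1,p}$ then $(z^k)_k$ is bounded in $W^{1,p}$ and $z^k-w^{n(k)}$ is supported on sets of vanishing measure while bounded in $L^p$, so $z^k-w^{n(k)}\to0$ in $L^1$; since every weak subsequential limit of $(z^k)_k$ then coincides with $w$, the whole sequence converges weakly to $w$ in $W^{1,p}$. Part (b) follows the same scheme: one truncates through $\mathcal M(|\nabla^2 w^n|)$, notes that $w^n$ is $C^{1,1}$ with $C\lambda$-Lipschitz gradient on the good set, and extends by a Whitney construction keeping $z^\lambda=w^n$, $\nabla z^\lambda=\nabla w^n$ there and $|\nabla^2 z^\lambda|\le C\lambda$ everywhere, after which the identical diagonal selection applies (cf.\ \cite[Proposition A.5]{Vel14a}).

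The step I expect to be the main obstacle is the equi-integrability: it is not enough to push the truncation level to infinity along an arbitrary subsequence — the level $\lambda_n$ must be large enough that the maximal-function super-level set entering the truncation absorbs the concentration set $S^n$ of $|\nabla w^n|^p$, yet small enough that $\lambda_n^p|\{\mathcal M(|\nabla w^n|)>\lambda_n\}|$ still vanishes; verifying that these two requirements are compatible and can be met simultaneously with the diagonalization over $n$ is the technical heart, carried out in \cite{FoMuPe} (see also \cite{BraZep}).
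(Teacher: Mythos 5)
The paper does not actually prove Lemma \ref{ekvi1}; it states the results and cites \cite{FoMuPe} and \cite{BraZep} for part (a) and \cite[Proposition A.5]{Vel14a} for part (b). There is therefore no in-paper proof to compare against. Your sketch reconstructs precisely the Fonseca--M\"uller--Pedregal Lipschitz-truncation argument that the paper points to, so the approach is essentially the cited one, and the broad strategy (maximal-function truncation, McShane/Whitney extension, biting lemma, diagonal choice of truncation levels) is correct and standard.

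Two small points are worth flagging. First, the sentence claiming that the biting sets $S^n$ can be ``taken so that \dots $\operatorname{ess\,inf}_{S^n}\mathcal M(|\nabla w^n|)\to\infty$'' is not a consequence of the biting lemma as stated: a priori $S^n$ may contain points where the maximal function is small even if $S^n$ carries mass. This is repairable in two standard ways: either arrange for $S^n$ to be a superlevel set $\{|\nabla w^n|>t_n\}$ with $t_n\to\infty$, in which case $\mathcal M(|\nabla w^n|)\ge|\nabla w^n|>t_n$ a.e.\ on $S^n$; or drop the requirement $H_{n,\lambda_n}\subset(S^n)^c$ entirely and instead control the extra piece $|\nabla w^n|^p\mathbbm{1}_{S^n\cap H_{n,\lambda_n}}$ by $\lambda_n^p|S^n|$ (valid since $|\nabla w^n|\le\mathcal M(|\nabla w^n|)\le\lambda_n$ a.e.\ on $H_{n,\lambda_n}$), which tends to zero whenever $\lambda_n$ grows slowly enough relative to $|S^n|^{-1/p}$; the second route is what \cite{FoMuPe} actually does and makes the pigeonhole unconstrained from above. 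Second, the reduction ``enlarging $A$ to a bounded Lipschitz domain and extending'' presupposes that $A$ is an extension domain, while the lemma allows an arbitrary bounded open $A$; \cite{FoMuPe} handle this by working intrinsically via an exhaustion of $A$, but this is a routine technicality and does not affect the substance of the argument.
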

\begin{lemma}\label{ekvi2}
 Let $p>1$ and $A \subset \R^2$ be an open bounded set with Lipschitz boundary. Let $(h_n)_{n\in \N}$ be a sequence of positive numbers converging to $0$ and let $(\vect w^{h_n})_{n \in\N} \subset W^{1,p} (A \times I, \R^3)$ be a bounded sequence   satisfying:
$$ \limsup_{n \in \N} \|\nabla_{h_n} \vect w^{h_n}\|_{L^p}<+\infty.$$ Then there exists a subsequence
$(\vect w^{h_{n(k)}})_{k \in \N}$ and a sequence $(\vect z^{h_{n(k)}})_{k \in \N}$ such that
$$  |\vect z^{h_{n(k)}} \neq \vect w^{h_{n(k)}} | \to 0,$$
as $k \to \infty$ and $|\nabla_{h_{n(k)}} \vect z^{h_{n(k)}}|^p$ is equi-integable. If $\vect w^{h_{n}}\rightharpoonup \vect w \in W^{1,p}(A,\R^3)$ weakly in $W^{1,p}$, then $\vect z^{h_{n(k)}}\rightharpoonup \vect w $ in $W^{1,p}$. Each $\vect z^{h_{n(k)}}$ may be chosen such that 
$|\nabla_{h_{n(k)}}\vect z^{h_{n(k)}}|$ is bounded. 
\end{lemma}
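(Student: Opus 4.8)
The plan is to run, directly for the scaled gradient $\nabla_{h_n}$, the Lipschitz--truncation argument of \cite{FoMuPe} that underlies Lemma \ref{ekvi1}. The only genuinely new point is to replace the Euclidean maximal function and Euclidean balls by their anisotropic counterparts adapted to $\nabla_{h_n}$: the ``tubes''
$$
E_r^{h_n}(x):=\bigl\{z\in A\times I:\ |z'-x'|<r,\ h_n|z_3-x_3|<r\bigr\}
$$
and the distance $d_{h_n}(x,y)=\max\{|x'-y'|,\,h_n|x_3-y_3|\}$. The key observation is that under the anisotropic dilation $x\mapsto(x_1,x_2,h_nx_3)$ the operator $\nabla_{h_n}$ becomes the ordinary gradient, the tubes $E_r^{h_n}$ become ordinary balls, and Lebesgue measure changes by the constant factor $h_n$; hence the three local ingredients of the truncation hold with constants \emph{independent of $h_n$}: the weak $(p,p)$ bound $|\{M^{h_n}g>\lambda\}|\le C\lambda^{-p}\|g\|_{L^p}^p$ for the maximal operator $M^{h_n}$ built from the $E_r^{h_n}$, the Poincar\'e inequality $\|\vect w-(\vect w)_{E_r^{h_n}}\|_{L^p(E_r^{h_n})}\le Cr\,\|\nabla_{h_n}\vect w\|_{L^p(E_r^{h_n})}$, and the pointwise estimate $|\vect w(x)-\vect w(y)|\le C\,d_{h_n}(x,y)\bigl(M^{h_n}(|\nabla_{h_n}\vect w|)(x)+M^{h_n}(|\nabla_{h_n}\vect w|)(y)\bigr)$ at Lebesgue points. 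This is the first--order version of what is carried out in \cite[Proposition A.5]{Vel14a} for second gradients.

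First I would reduce to the interior: extend each $\vect w^{h_n}$ across $\partial A\times I$ by a fixed reflection--type extension for the Lipschitz set $A$, applied in the $x'$--variable so that it does not enlarge $\|\partial_3\vect w^{h_n}\|_{L^p}$, and across $A\times\{\pm\tfrac12\}$ by even reflection in $x_3$ (which leaves $|\nabla_{h_n}\cdot|$ unchanged). This places the sequence, with $h_n$--uniformly controlled $W^{1,p}$-- and $\|\nabla_{h_n}\cdot\|_{L^p}$--norms, on a slightly larger cylinder, and removes all boundary effects for tubes centred in $A\times I$ (these have $x_3$--extent at most $1$). Then for $\lambda>0$ set $H_\lambda^{h_n}:=\{x:\ M^{h_n}(|\nabla_{h_n}\vect w^{h_n}|)(x)\le\lambda,\ M^{h_n}(|\vect w^{h_n}|)(x)\le\lambda\}$. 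By the pointwise estimate $\vect w^{h_n}$ is $C\lambda$--Lipschitz for $d_{h_n}$ on $H_\lambda^{h_n}$; a McShane extension (performed in the dilated coordinates) yields $\vect z_\lambda^{h_n}$ with $\vect z_\lambda^{h_n}=\vect w^{h_n}$ on $H_\lambda^{h_n}$, $\ |\nabla_{h_n}\vect z_\lambda^{h_n}|+|\vect z_\lambda^{h_n}|\le C\lambda$ a.e., and $|\{\vect z_\lambda^{h_n}\neq\vect w^{h_n}\}|\le|(A\times I)\setminus H_\lambda^{h_n}|\le C\lambda^{-p}\bigl(\|\nabla_{h_n}\vect w^{h_n}\|_{L^p}^p+\|\vect w^{h_n}\|_{L^p}^p\bigr)\le C\lambda^{-p}$, all uniformly in $n$.

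With these building blocks the conclusion follows exactly as in \cite{FoMuPe}: passing to a subsequence $(h_{n(k)})_k$ and choosing truncation levels $\lambda_k\uparrow\infty$ suitably slowly by a diagonal argument, one gets $\vect z^{h_{n(k)}}:=\vect z^{h_{n(k)}}_{\lambda_k}$ with $|\{\vect z^{h_{n(k)}}\neq\vect w^{h_{n(k)}}\}|\to0$, with $|\nabla_{h_{n(k)}}\vect z^{h_{n(k)}}|\le C\lambda_k$ bounded for each $k$, and with $(|\nabla_{h_{n(k)}}\vect z^{h_{n(k)}}|^p)_k$ equi-integrable (via the splitting $|\nabla_{h_{n(k)}}\vect z^{h_{n(k)}}|^p\le|\nabla_{h_{n(k)}}\vect w^{h_{n(k)}}|^p\mathbbm{1}_{H_{\lambda_k}^{h_{n(k)}}}+(C\lambda_k)^p\mathbbm{1}_{(A\times I)\setminus H_{\lambda_k}^{h_{n(k)}}}$ and the corresponding bookkeeping of \cite{FoMuPe}). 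Finally, since $(\vect w^{h_n})$ is bounded in $W^{1,p}(A\times I,\R^3)$ it is precompact in $L^p$ (Rellich), and the $\nabla_{h_n}$--bound forces $\partial_3\vect w^{h_n}\to0$ in $L^p$, so any $W^{1,p}$--weak limit is an $x_3$--independent $\vect w\in W^{1,p}(A,\R^3)$; as $\vect z^{h_{n(k)}}-\vect w^{h_{n(k)}}$ is supported on a set of vanishing measure while both terms are $L^p$--equi-integrable, it tends to $0$ in $L^p$ by Vitali and weakly in $W^{1,p}$, whence $\vect z^{h_{n(k)}}\rightharpoonup\vect w$.

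The step I expect to be the main obstacle is establishing the three local ingredients above with constants uniform in $h_n$ --- in particular the Vitali/Calder\'on--Zygmund covering behind the weak--type bound for $M^{h_n}$ and the $d_{h_n}$--Lipschitz extension near the mid--plane $x_3=0$, where the tubes $E_r^{h_n}$ degenerate into full vertical columns $B_r'(x')\times(-\tfrac12,\tfrac12)$ and must be fed consistently through the even reflection in $x_3$. This is exactly the anisotropic analogue of \cite[Proposition A.5]{Vel14a}, and the uniformity is secured by systematically transporting every local estimate through the dilation $x\mapsto(x_1,x_2,h_nx_3)$, under which tubes become balls, $\nabla_{h_n}$ becomes $\nabla$, and the Jacobian $h_n$ cancels in every ratio. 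An alternative --- unfolding the $x_3$--variable and invoking Lemma \ref{ekvi1}(a) on a fixed cylinder --- is possible but more delicate, because one must then keep the truncation compatible with the induced periodic structure so that equi-integrability of the \emph{scaled} gradient is not spoilt when descending back to $A\times I$; the direct maximal--function argument avoids this bookkeeping.
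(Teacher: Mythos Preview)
The paper does not give its own proof of this lemma: it is stated in the appendix with the attribution ``given in \cite{BraZep}'' (Braides--Zeppieri). So there is no in-paper argument to compare against, only the cited one.

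Your proposal is correct and is essentially a repackaging of the Braides--Zeppieri proof. Their argument rescales $x_3\mapsto h_nx_3$ to the thin cylinder $A\times(-h_n/2,h_n/2)$, extends by iterated even reflection in $x_3$ back to a fixed cylinder, applies the isotropic Fonseca--M\"uller--Pedregal decomposition (Lemma~\ref{ekvi1}(a)) there, and then transports the truncation back, checking that equi-integrability and the small-set condition survive averaging over the $\sim 1/h_n$ periods. You instead run the Lipschitz--truncation directly in the anisotropic metric $d_{h_n}$, which is the same computation after pushing every local estimate through the dilation $(x',x_3)\mapsto(x',h_nx_3)$. Your route avoids the periodic-extension bookkeeping at the price of redoing the maximal-function/Poincar\'e machinery anisotropically; both are legitimate and equivalent. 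What you describe at the end as the ``alternative --- unfolding the $x_3$--variable and invoking Lemma~\ref{ekvi1}(a) on a fixed cylinder'' is in fact precisely the route taken in \cite{BraZep}.

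One small correction: your sentence ``these have $x_3$--extent at most $1$'' is not right as stated, since the condition $h_n|z_3-x_3|<r$ gives $x_3$--extent $2r/h_n$, which exceeds $1$ once $r>h_n/2$. This is harmless, however, because your tubes are defined as subsets of $A\times I$ and are therefore automatically truncated in $x_3$; the doubling constant and the anisotropic Poincar\'e constant for these truncated tubes remain bounded uniformly in $h_n$ (both facts are seen by the same dilation you invoke: the truncated tube becomes a convex subset of $\R^3$ of Euclidean diameter $\lesssim r$). So a single even reflection in $x_3$ is not needed for interior estimates, and if you want genuine full tubes you should reflect periodically rather than once---which is exactly what \cite{BraZep} do.
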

The following lemma is given in \cite[Lemma 3.4]{Vel14a}. For the sake of completeness, we will give also the proof. 
\begin{lemma}[continuity in $\mat M$] \label{lem:ocjena}
	Under the uniform coerciveness and boundedness assumption \eqref{2.eq:A1}, 
	there exists a constant $C>0$ depending only on $\alpha$ and $\beta$, such that for every sequence 
	$(h_n)_{n}$ monotonically decreasing to zero and $A \subset \omega$ open set with 
	Lipschitz boundary it holds:
	\begin{eqnarray}\label{ocjena1111}
	\left| \Khpm(\mat M_1,A)-\Khpm(\mat M_2,A)\right| 
	&\leq& C \|\mat M_1-\mat M_2\|_{L^2}\left(\|\mat M_1\|_{L^2}+\|\mat M_2\|_{L^2}\right)\,,\\
	\nonumber & &  \ \forall \mat M_1, \mat M_2 \in \set S(\omega)\,,
	\end{eqnarray}
\end{lemma}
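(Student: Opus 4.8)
The plan is to estimate the difference of the two variational functionals directly from the definitions in Definition~\ref{defodK}, using the algebraic inequality \eqref{razlika} for the quadratic forms $Q^{h_n}$ together with the uniform a priori bound on correctors coming from testing with zero. First I would fix $\mat M_1,\mat M_2\in\set S(\omega)$ and, by the diagonalization remark (Remark~\ref{2.rem:top}(b)) or simply by the definition of the infimum, pick for $\mat M_1$ a near-optimal sequence of correctors $(\vect\psi^{h_n})_n\subset H^1(\Omega,\R^3)$ with $(\psi_1^{h_n},\psi_2^{h_n},h_n\psi_3^{h_n})\to 0$ in $L^2$, realizing (up to $\varepsilon$) the value $\Khpm(\mat M_1,A)$. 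The key observation is that the \emph{same} corrector sequence is admissible in the infimum defining $\Khpm(\mat M_2,A)$, since the constraint on the correctors does not involve $\mat M$ at all. Hence
\[
\Khpm(\mat M_2,A)\le \operatorname*{lim\,sup/inf}_{n\to\infty}\int_\Omega Q^{h_n}\bigl(x,\imath(\mat M_2\mathbbm 1_{A\times I})+\nabla_{h_n}\vect\psi^{h_n}\bigr)\,\dd x.
\]

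Next I would apply \eqref{razlika} pointwise with $\mat F_1=\imath(\mat M_2\mathbbm 1_{A\times I})+\nabla_{h_n}\vect\psi^{h_n}$ and $\mat F_2=\imath(\mat M_1\mathbbm 1_{A\times I})+\nabla_{h_n}\vect\psi^{h_n}$, so that $\sym\mat F_1-\sym\mat F_2=\imath(\sym(\mat M_2-\mat M_1)\mathbbm 1_{A\times I})$ while $\sym\mat F_1+\sym\mat F_2=\imath(\sym(\mat M_1+\mat M_2)\mathbbm 1_{A\times I})+2\sym\nabla_{h_n}\vect\psi^{h_n}$. Integrating and using Cauchy--Schwarz gives
\[
\Bigl|\int_\Omega Q^{h_n}(x,\mat F_1)\,\dd x-\int_\Omega Q^{h_n}(x,\mat F_2)\,\dd x\Bigr|
\le \beta\,\|\mat M_1-\mat M_2\|_{L^2}\Bigl(\|\mat M_1\|_{L^2}+\|\mat M_2\|_{L^2}+2\|\sym\nabla_{h_n}\vect\psi^{h_n}\|_{L^2}\Bigr).
\]
To close the argument I would invoke the a priori bound on the correctors: by testing the infimum defining $\Khpm(\mat M_1,A)$ with the zero field and using the coercivity/boundedness in \eqref{2.eq:A1} one gets $\|\sym\nabla_{h_n}\vect\psi^{h_n}\|_{L^2}\le C(\alpha,\beta)\|\mat M_1\|_{L^2}$ for a near-optimal sequence (this is exactly \eqref{omedjenost} in the proof of Lemma~\ref{lemma:equi-int1}). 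Substituting, taking $\limsup$ (resp.\ $\liminf$) in $n$, and letting $\varepsilon\to 0$ yields $\Khpm(\mat M_2,A)-\Khpm(\mat M_1,A)\le C(\alpha,\beta)\|\mat M_1-\mat M_2\|_{L^2}(\|\mat M_1\|_{L^2}+\|\mat M_2\|_{L^2})$; exchanging the roles of $\mat M_1$ and $\mat M_2$ gives the absolute value.

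The mildly delicate point is making sure that the near-optimal corrector sequence for $\Khpm(\mat M_1,A)$ can simultaneously be taken to satisfy the uniform bound \eqref{omedjenost}; this is immediate because any sequence violating that bound already fails to be near-optimal, since the zero field is always admissible and gives value at most $\beta\|\mat M_1\|_{L^2}^2$, while coercivity forces a non-near-optimal sequence to have larger energy. A second, purely bookkeeping issue is the $\limsup$/$\liminf$ asymmetry between $\Khp$ and $\Khm$: for $\Khp$ one works with $\limsup$ throughout, for $\Khm$ with $\liminf$, and in either case the elementary inequality $|\operatorname{op}(a_n)-\operatorname{op}(b_n)|\le\sup_n|a_n-b_n|$ (valid for $\operatorname{op}\in\{\liminf,\limsup\}$) lets the pointwise-in-$n$ estimate above pass to the limit. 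No further ingredients are needed; the remark following Assumption~\ref{ass: perforated} (Remark~\ref{remanalognoperf}) indicates that in the perforated setting one runs the identical computation after replacing the test functions by their extensions $\vect\psi^{E^{h_n}}$ and using properties (a),(b),(d) of the extension operator in place of \eqref{2.eq:A1}.
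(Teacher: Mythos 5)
Your proposal is essentially the paper's own proof: fix near-optimal correctors for one matrix, observe they are admissible for the other, apply \eqref{razlika} with the same corrector and different $\mat M$, and control the extra term via the a priori bound \eqref{identitet} obtained by testing with zero. The paper phrases the same argument through the finite-$n$ quantities $\set K_{h_n}(\mat M,A,\set U)$ from \eqref{bukal111111} (which sidesteps the bookkeeping about near-optimal sequences versus near-optimal elements for each $n$), but the key estimates and the mechanism are identical.
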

\begin{proof}
	Due to relation \eqref{trivijala}, it is enough to assume $A=\omega$.
	For fixed $\mat M_1,\, \mat M_2 \in \set S(\omega)$ and $r,\,h_n>0$ take arbitrary
	 $\vect \psi_{\mat M_\alpha}^{r,h_n}\in H^1(\Omega,\R^3)$, which for $\alpha=1,2$ satisfy: 
	\begin{eqnarray}\label{definf}
	& &  \int_\Omega Q^{h_n}\left(x,\iota(\mat M_\alpha )+\nabla_{h_n} \vect \psi_{\mat M_\alpha}^{r,h_n}\right)\dd x 
	\leq \\ \nonumber & & \hspace{10ex}\inf_{\vect \psi \in H^1(\Omega,\R^3) \atop \|(\psi_1,\psi_2,h_n\psi_3)\|_{L^2} \leq r}
	\int_{\Omega} Q^{h_n}\left(x,\iota(\mat M_\alpha )+\nabla_{h_n} \vect \psi\right) \dd x + h_n\,; \\
	\nonumber& & \|(\psi_{\alpha,1}^{r,h_n},\psi_{\alpha,2}^{r,h_n},h_n\psi_{\alpha,3}^{r,h_n})\|_{L^2} \leq r\,.
	\end{eqnarray}
	We want to prove that for every $r>0$ we have
	\begin{eqnarray}\label{nped}
	& &\left| \int_{\Omega} Q^{h_n}\left(x,\iota(\mat M_1 )+\nabla_{h_n}\vect \psi_{\mat M_1 }^{r,h_n}\right)\dd x - 
	\int_{\Omega} Q^{h_n}\left(x,\iota(\mat M_2 )+\nabla_{h_n} \vect \psi_{\mat M_2}^{r,h_n}\right)\dd x \right| \\ 
	& & \nonumber \hspace{+15ex}\leq C \|\mat M_1-\mat M_2\|_{L^2}\left(\|\mat M_1\|_{L^2}+\|\mat M_2\|_{L^2}\right) + h_n\,.
	\end{eqnarray}
	From that, (\ref{ocjena1111}) can be easily obtained by using \eqref{bukal111111} for a family of balls of radius $r$.
	
	Let us prove (\ref{nped}). From (\ref{definf}) and \eqref{2.eq:A1}, by testing with  zero function, 
	we can assume for $\alpha=1,2$
	$$
	\alpha \|\mat M_\alpha+ \sym \nabla_{h_n} \vect \psi^{r,h_n}_{\mat M_\alpha}\|^2_{L^2}  
	\leq \int_{A \times I} Q^{h_n}\left(x,\iota(\mat M_\alpha)+\nabla_{h_n} \vect \psi_{\mat M_\alpha}^{r,h_n}\right)\dd x 
	\leq \beta \|\mat M_{\alpha}\|^2_{L^2}.
	$$
	From this we have for $\alpha=1,2$
	\begin{equation} \label{identitet} 
	\| \sym \nabla_{h_n} \vect \psi^{r,h_n}_{\mat M_\alpha}\|^2_{L^2} 
	\leq C(\alpha,\beta) \|\mat M_{\alpha}\|^2_{L^2}.
	\end{equation}
	
	Without any loss of generality we can also assume that
	\begin{equation}\label{pretpostavka1}
	\int_{\Omega} Q^{h_n} \left(x,\iota(\mat M_1)+\nabla_{h_n} \vect \psi_{\mat M_1}^{r,h_n}\right)\dd x 
	\geq \int_{\Omega} Q^{h_n}\left(x,\iota(\mat M_2)+\nabla_{h_n} \vect \psi_{\mat M_2}^{r,h}\right)\dd x\,.
	\end{equation}
	We have
	\begin{eqnarray*}
		& &\left| \int_{\Omega} Q^{h_n}\left(x,\iota(\mat M_1)+\nabla_{h_n} \vect \psi_{\mat M_1}^{r,h_n}\right)\dd x 
		- \int_{\Omega} Q^{h_n}\left(x,\iota(\mat M_2)+\nabla_{h_n} \vect \psi_{\mat M_2}^{r,h_n}\right)\dd x\right| \\ 
		& & = \int_{\Omega} Q^{h_n}\left(x,\iota(\mat M_1)+\nabla_{h_n} \vect \psi_{\mat M_1}^{r,h_n}\right)\dd x 
		- \int_{\Omega} Q^{h_n}\left(x,\iota(\mat M_2)+\nabla_{h_n} \vect \psi_{\mat M_2}^{r,h_n}\right)\dd x \\ & & =
		\int_{\Omega} Q^{h_n}\left(x,\iota(\mat M_1)+\nabla_{h_n} \vect \psi_{\mat M_1}^{r,{h_n}}\right)\dd x 
		- \int_{\Omega} Q^{h_n}\left(x,\iota(\mat M_1)+\nabla_{h_n} \vect \psi_{2}^{r,h_n}\right)\dd x \\ 
		& & + \int_{\Omega} Q^{h_n}\left(x,\iota(\mat M_1)+\nabla_{h_n} \vect \psi_{\mat M_2}^{r,h_n}\right)\dd x 
		- \int_{\Omega} Q^{h_n}\left(x,\iota(\mat M_2)+\nabla_{h_n} \vect \psi_{\mat M_2}^{r,h_n}\right)\dd x \\
		& & \leq h_n + C(\alpha,\beta) \|\mat M_1-\mat M_2\|_{L^2}\left(\|\mat M_1\|_{L^2}+\|\mat M_2\|_{L^2}\right),
	\end{eqnarray*}
	where we used \eqref{pretpostavka1}, \eqref{razlika} and \eqref{identitet} respectively. This concludes the proof. 
\end{proof} 

\section*{Acknowledgment}
This work has been supported by the Croatian Science
Foundation under Grant agreement No.~9477 (MAMPITCoStruFl).


\end{document}